\documentclass[leqno,12pt]{article}

\usepackage[usenames, dvipsnames]{xcolor}
\usepackage{pgf,tikz} % Pour les figures avec tikz\usetikzlibrary{shapes,decorations} %pour rajouter des figures Ã  tikz : diamond, etc...
\usetikzlibrary{shapes,arrows,chains}
\usepackage{setspace} 
\usepackage{mathpazo}
\usepackage{amssymb,hyperref,amsthm}
\usepackage{euscript}
\usepackage{flafter}
\usepackage{pstricks}
\usepackage[latin1]{inputenc}
\usepackage{amsmath}
\usepackage{amsfonts}
\usepackage{pstricks-add}
\usepackage{yfonts}
\usepackage{egothic}
\usepackage{dsfont}
\usepackage{bm}
\usepackage{variations}
\usepackage{mathtools}
\usepackage{mathrsfs}
\usepackage{graphicx}   % for \includegraphics and trim/clip options
\usepackage{float}      % for the [H] placement specifier
\usepackage{subcaption} % for the subfigure environment

\usepackage[most]{tcolorbox}

\hypersetup{
    linktoc=page,
   linkcolor=red,          % color of internal links
  citecolor=blue,        % color of links to bibliography
    filecolor=blue,      % color of file links
   urlcolor=cyan,
    colorlinks=true           % color of external links
}

\usepackage[refpage]{nomencl}
\usepackage{makeidx}
\makeindex

\evensidemargin\oddsidemargin
\newcommand{\eqnsection}{
\renewcommand{\theequation}{\thesection.\arabic{equation}}
   \makeatletter
   \csname  @addtoreset\endcsname{equation}{section}
   \makeatother}
\eqnsection
\def\R{{\mathbb R}}
\def\e{{\mathbb E}}
\def\p{{\mathbb P}}
\def\P{{\bf P}}
\def\E{{\bf E}}
\def\Q{{\bf Q}}
\def\Z{{\mathbb Z}}
\def\N{{\mathbb N}}
\def\T{{\mathbb T}}
\def\S{\mathcal{S}}
\newtheorem{thm}{Theorem}[section]
\newtheorem{prop}[thm]{Proposition}
\newtheorem{lem}[thm]{Lemma}

\newtheorem{condition}{Assumption}

\newcommand{\floor}[1]{{\left\lfloor #1 \right\rfloor}}
\newcommand{\ceil}[1]{{\left\lceil #1 \right\rceil}}
\newcommand{\ind}[1]{\mathbf{1}_{\left\{ #1 \right\}}}
\renewcommand{\epsilon}{\varepsilon}

\newcommand{\egloi}{\stackrel{\textrm{(d)}}{=}}

\usepackage{stmaryrd} % Pour utiliser les doubles crochets des intervalles d'entiers \llbracket \rrbracket
\title{\bf Branching random walk in random environment%Tail behaviours of branching random walks, of associated multitype Galton-Watson tree and of branching process with immigration in random environment
}

\author{Xinxin Chen\thanks{School of Mathematical Sciences, Beijing Normal University, xinxin.chen@bnu.edu.cn}, Chenlin Gu\thanks{Yau Mathematical Sciences Center, Tsinghua University, gclmath@tsinghua.edu.cn} and Zhiqi Zhao\thanks{Department of Statistics, University of Oxford, zhiqi.zhao@stats.ox.ac.uk}}

\begin{document}

\maketitle
%\listoftodos
\begin{abstract}
We consider a branching random walk on \(\Z^d\) in a random environment given by Bernoulli site percolation with parameter \(p\in (0,1)\). In this model, each particle located at an open site reproduces according to a law \(\mu_\circ\), whereas a particle at a closed site reproduces according to another law \(\mu_\bullet\). Each newly born child performs an independent simple random walk jump from the position of its parent. We study the quenched survival probability under various assumptions on \((\mu_\circ, \mu_\bullet)\) and establish a Yaglom theorem when both offspring distributions are critical.
%We consider a branching random walk in $\Z^d$ where the random environment is given by Bernoulli site percolation with parameter $p\in (0,1)$. In this model, every particle at open site, branches according to the law $\mu_\circ$ while every particle at closed site branches according to another law $\mu_\bullet$. Each new born child makes an independent simple random walk jump from the position of its parent. We study the quenched survival probability under different conditions on $(\mu_\circ, \mu_\bullet)$ and establish a Yaglom theorem when both offspring laws are critical. 

%The environment is given by a branching random walk for which the tail distribution of $W_\infty$, the limit of its additive martingale has been given in \cite{Liu}. In this work, we study the joint tail distribution of $W_\infty$ and the minimum of the branching random walk, and then use it to obtain the tail behaviour of the largest local times of the associated random walk in random environment during one excursion. We also establish the asymptotic behaviour of the largest local times during $n$ excursions. %Our arguments also show the tail distribution of the maximum of a branching process in random environment with immigration.

\noindent\textit{Keywords: Branching random walk, Bernoulli site percolation, Random walk in random scenery, Stein method }%, Branching process in random environment with immigration}

\noindent MSC 2000: 60J80; 60G50; 60K37. 
\end{abstract}

\section{Introduction: Models and results}
\label{Intro}
\subsection{Branching random walk in random environment}
Let us consider a branching random walk in i.i.d. random environment. First, take $p\in(0,1)$ and sample a Bernoulli site percolation with parameter $p$ on $\Z^d$. Denote it by $\omega=\{\omega(x)\}_{x\in \Z^d}$ whose law is $\P_p$. This means that under $\P_p$, $\{\omega(x), x\in\Z^d\}$ are i.i.d. Bernoulli random variables. A site $x$ is said to be open if $\omega(x) =1$, otherwise it is closed. This $\omega$ serves as environment in our branching model. 

Let $\mu_\circ$ and $\mu_\bullet$ be two probability measures on $\N=\{0,1,2,\cdots\}$. Let $\nu$ be a probability measure on $\Z^d$ which will be the motion law. 

Given $\omega$, we run a branching random walk started from $x\in\Z^d$ in the following way. At time $0$, there is an ancestor denoted by $\emptyset$, located at $S_\emptyset = x$. At time $1$, the root particle $\emptyset$ is replaced by $N_\emptyset$ children where $N_\emptyset$ has the law $\mu_\circ$ if $\omega(S_\emptyset)=1$ and it has the law $\mu_\bullet$ if $\omega(S_{\emptyset})=0$. If $N_\emptyset =0$, the system stops. Otherwise, we label the children by $1,2,\cdots, N_\emptyset$ and each child $i$ makes an independent jump to $S_i$ according to $\nu$. We thus get the particles at the first generation and their positions: $\{(i,S_i), 1\le i\le N_\emptyset\}$. At time 2, every particle of the first generation dies and independently gives birth to its own children and the offspring of $i$ is labelled  by $i1, i2, \cdots, iN_i$. If $\omega(S_i)=1$, $N_i$ follows the law $\mu_\circ$ and it follows the law $\mu_\bullet$ if $\omega(S_i)=0$. Each new born child $ij$ makes an independent jump from $S_i$ to $S_{ij}$ according to $\nu$, for $1\le j\le N_i, 1\le i \le N_\emptyset$. The system continues to evolve in this way and we get a branching random walk in random environment.

We use $\p^\omega_x$ and $\e^\omega_x$ to denote the quenched law and the corresponding expectation if the system starts from $S_\emptyset=x$, and use $\P_p$ and $\E_p$ to denote the environment law and its expectation. And let $\p_x= \P_p\otimes \p^\omega_x$ and $\e_x$ be the annealed probability and annealed expectation. When $x=0$, we use $\p^\omega$ and $\p$ for abbreviation.

Let \(\T\) denote the genealogical tree of the process. Then \(\T\subset \mathbb{U}:=\{\emptyset\}\cup\bigcup_{k\ge1}\N_+^k\), where \(\N_+:=\N\setminus\{0\}\), and \(\T\) is a random tree. For any particle \(u\in\T\), we set \(|u|=0\) if \(u=\emptyset\), and \(|u|=k\) if \(u=u_1\cdots u_k\in\N_+^k\) for some \(k\ge1\); its position is denoted by \(S_u\). Define
\begin{equation}
Z_n(\cdot):=\sum_{|u|=n}\delta_{S_u}(\cdot), \qquad n\ge0,
\end{equation}
the counting measure at time \(n\). In particular, we write \(Z_n=Z_n(\Z^d)\) for the total population size at generation \(n\).

In this work, we first study the quenched survival event
\[
\S:=\{Z_n\ge1 \text{ for all } n\ge0\}.
\]
Second, in the special case where both \(\mu_\circ\) and \(\mu_\bullet\) are critical, we establish the Kolmogorov estimate and the Yaglom theorem under the quenched probability.

For simplicity, throughout this paper we assume that the motion law is
\begin{equation}
\nu = \sum_{|e|_1=1}\frac{1}{2d}\delta_e,
\end{equation}
which corresponds to simple random walk jumps, where \(|\cdot|_1\) denotes the \(\ell^1\)-norm.

%Let $\T$ be the genealogical tree of this model. Then $\T\subset \mathbb{U}:=\{\emptyset\}\bigcup\cup_{k\ge1}\N_+^k$ with $\N_+:=\N\setminus\{0\}$, and it is a random tree. For every particle born in this model $u\in\T$, if $u=\emptyset$, we set $|u|=0$; if $u = u_1\cdots u_k \in \N_+^k$ for some $k\ge 1$, we use $|u|=k$ to denote its generation and $S_u$ its position. Let
%\begin{equation}
%Z_n(\cdot):=\sum_{|u|=n}\delta_{S_u}(\cdot), \forall n\ge 0,
%\end{equation}
%be the counting measure at time $n$. In particular, write $Z_n = Z_n(\Z^d)$.
 
%In this work, we first study\[\S:=\{Z_n\ge1, \forall n\ge0\}\]
%under the quenched $\p^\omega$. Secondly,  in the particular case where both $\mu_\circ$ and $\mu_\bullet$ are critical, we are going to establish the Kolmogorov estimate and Yaglom theorem.

%For simplicity, throughout this paper, we always assume that the motion law is
%\begin{equation}
%\nu = \sum_{|e|_1=1}\frac{1}{2d}\delta_e
%\end{equation} 
%which corresponds to the simple random walk jump with $|\cdot|_1$ being $\ell^1$-norm.

To place our work in a broader perspective, we briefly recall several related models of branching random walks in spatially random environments. Comets and Popov~\cite{CP2007} investigated supercritical branching random walks in an i.i.d. random environment on \(\mathbb Z^d\), where both the branching mechanism and the jump law depend on the spatial site. They established a recurrence/transience dichotomy and proved a shape theorem in the recurrent regime. Den Hollander, Menshikov and Popov~\cite{HMP1999} analyzed a model in which branching is allowed only at a sparse random set of sites, and obtained sufficient conditions for recurrence and transience. In dimension \(d=1\), criterion for recurrence/transience was investigated by Gantert et al \cite{GMPV2010} for i.i.d. environment and was also considered by Machado and Popov \cite{MP2000} for Markovian random environment. When the space is regular tree, Machado and Popov~\cite{MP2003} provided criteria for recurrence and transience. Moreover, Devulder~\cite{Dev2007} studied the velocity of the rightmost particle for the one-dimensional model with constant branching rate and a left-biased random jump law.

In the continuous-space setting, for critical branching Brownian motion with soft killing in Poissionnian obstacles, Le Gall and V\'eber \cite{LGV2012} studied the quenched probability of exiting a large ball. In the supercritical case, Engl\"ander~\cite{Eng2008} considered a dyadic branching Brownian motion whose branching rate is suppressed by mild Poissonian obstacles, and established a quenched law of large numbers for the total population size; the question of the decay rate of the survival probability in the critical case was also raised there. In the discrete setting considered in the present paper, where Brownian motion is replaced by a random walk and Poissonian obstacles by an i.i.d. environment, Engl\"ander and Siebern~\cite{ES} proved almost sure extinction in the critical case, and for \(d=1\), formulated a conjecture, supported by simulations, on the decay rate. Subsequently, Engl\"ander and Peres~\cite{EP} derived quenched Kolmogorov-type estimates in the case where \(\mu_\bullet=\delta_1\) and \(\mu_\circ=\frac12\delta_0+\frac12\delta_2\), as well as in the subcritical regime. They also conjectured the quenched Yaglom theorem in the critical regime which will be verified in the current paper.

\subsection{Main results}

Denote the mean numbers of offsprings by
\[
m_\circ = \sum_{k\ge0} k\mu_\circ(k),\qquad m_\bullet = \sum_{k\ge0} k\mu_\bullet(k).
\]
Observe that offspring law $\delta_1$ means degenerately having $1$ child and that $\delta_0$ means pure killing. 

Our first theorem discusses the quenched survival probability as follows. 

\begin{thm}\label{thm: Sprobab}
The following holds for any dimension and for any $p\in(0,1)$.
\begin{enumerate}
\item[\textbf{(1)}] If $\max\{m_\circ, m_\bullet\}\le 1$ and $\min\{\mu_\circ(1), \mu_\bullet(1) \}<1$, then for $\P_p$-a.s. environment $\omega$,
\[
\p^\omega(\S) =0.
\]
\item[\textbf{(2)}] If $\max\{m_\circ, m_\bullet\}> 1$ and $\max\{\mu_\circ(0), \mu_\bullet(0) \}<1$, then for $\P_p$-a.s. environment $\omega$,
\[
\p^\omega(\S) >0.
\]
\item[\textbf{(3)}] If $\max\{m_\circ, m_\bullet\}> 1$ and $\max\{\mu_\circ(0), \mu_\bullet(0) \}=1$, then 
\[
\p(\S)>0,
\]
and 
\[
\P_p\left( \omega : \p^\omega(\S) = 0 \right) >0. 
\]
\end{enumerate}

\end{thm}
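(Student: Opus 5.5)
The plan is to handle the three parts separately, each time working with the quenched law given a fixed typical $\omega$.

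For part \textbf{(1)}, note that $\e^\omega[Z_{n+1}\mid\mathcal F_n]=\sum_{|u|=n} m(S_u)\le Z_n$, where $m(x)=m_\circ$ or $m_\bullet$ according to $\omega(x)$. So $(Z_n)$ is a nonnegative integer-valued supermartingale under $\p^\omega$. It converges a.s., hence is eventually constant, say equal to $k$. It remains to rule out $k\ge1$. Assume, say, $\mu_\circ(1)<1$; the case $\mu_\bullet(1)<1$ is symmetric with closed sites. Together with $m_\circ\le1$, this gives $\mu_\circ(0)>0$. Since $p\in(0,1)$, $\P_p$-a.s. every site is within bounded $\ell^1$ distance of an open site. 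More precisely, the open sites are infinitely many and are visited by the walk's neighbourhood.

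I would argue as follows. On $\{Z_n=k \text{ for all } n\ge N\}$, at each generation at least one particle sits at some site $x$. Consider the particles whose lineage stays put, i.e.\ each has exactly one child. If some particle sits on an open site, the population changes with probability at least $c:=1-\mu_\circ(1)>0$. If not, every particle is at a closed site. Two subcases arise:
\begin{itemize}
\item If $\mu_\bullet(1)<1$, the same reasoning applies.
\item If $\mu_\bullet=\delta_1$, the particles perform simple random walks, and within boundedly many steps one of them reaches an open site with conditional probability bounded below. Here I use that a random walk from any site hits an open site within $L$ steps with probability $\ge (2d)^{-L}$, provided some open site lies within distance $L$. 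The issue is that the nearest open site can be far in a typical environment. So instead I would use recurrence or transience: a single random walk visits infinitely many distinct sites, hence a.s.\ infinitely many open ones (by i.i.d.-ness along its range, conditioning on $\omega$ being typical).
\end{itemize}
By the conditional Borel--Cantelli lemma (Lévy's extension), a change of $Z_n$ then occurs infinitely often a.s. This contradicts eventual constancy, so $k=0$. The main obstacle is this step: making ``visits infinitely many open sites'' quenched-a.s. For that I would use that the range of simple random walk is infinite and that the environment is independent of the walk under the annealed law, then apply Fubini.

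For part \textbf{(2)}, say $m_\circ>1$ (the case $m_\bullet>1$ is symmetric). $\P_p$-a.s.\ there exist arbitrarily large boxes of open sites; in fact, for any $L$ there is a cube of side $L$, fully open, somewhere. Restrict to particles that stay inside such a cube $B$, killing those that exit. Then $B$ carries a multitype Galton--Watson process whose mean matrix is $m_\circ P_B$, where $P_B$ is the random walk killed on leaving $B$. Its Perron root is $m_\circ\lambda_B$, and $\lambda_B\to1$ as $L\to\infty$, so the process is supercritical for large $L$. It is irreducible, and nonsingular because $\mu_\circ\ne\delta_1$ when $m_\circ>1$. Hence it survives with positive probability from any starting point in $B$. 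Finally, since $\max\{\mu_\circ(0),\mu_\bullet(0)\}<1$, the ancestor at $0$ has a line of descent reaching $B$ along a fixed path with positive probability. Multiplying these two positive probabilities gives $\p^\omega(\S)>0$.

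For part \textbf{(3)}, the annealed statement follows from (2)'s construction: with positive $\P_p$-probability the origin lies in a large open (respectively closed) box with the supercritical law, and then we survive. For the second statement, suppose $\mu_\bullet=\delta_0$ (or $\mu_\circ=\delta_0$). With positive probability all $2d$ neighbours of $0$ are closed (respectively open and killing). In the first case every first-generation child is killed immediately; if instead the origin itself is killing, survival fails trivially. Either way this event has positive $\P_p$-probability and forces $\p^\omega(\S)=0$.
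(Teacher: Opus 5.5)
Your proposal is correct and follows the paper's proof in all three parts: an integer-valued supermartingale whose positive constant limit is excluded, with the $\delta_1$ case handled by showing passive-site lineages must hit the active set (your infinite-range/Fubini argument replaces the paper's quenched LLN); an open box carrying a supercritical killed branching random walk, reached along a fixed path; and an explicit killing configuration at or next to the origin. One slip: $1-\mu_\circ(1)$ is not a lower bound for the conditional probability that $Z_{n+1}\neq Z_n$ (two open-site particles with $\mu_\circ=\tfrac12\delta_0+\tfrac12\delta_2$ keep $Z=2$ with probability $\tfrac12$); since you work on $\{Z_\infty=k\}$, it suffices, as in the paper, to use $1-q_k$, where $q_k<1$ is the largest probability, over the finitely many configurations with at least one particle at an open site, that the total offspring equals $k$ (none of these laws is $\delta_k$, because $\mu_\circ$ is either non-degenerate or equal to $\delta_0$, in which case the mean is below $k$).
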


Let us work more for the third case. For convenience, we could suppose that $\mu_\bullet = \delta_0$ and $m_\circ >1$. Then particles get killed when exiting the open cluster of the origin $\mathcal{C}(0)$. Let us recall some basic facts on Bernoulli site percolation here. Set
\[
\theta_{\Z^d}(p):= \P_p(\#\mathcal{C}(0) = \infty ). 
\]
Then $\theta_{\Z^d}(p)\equiv 0$ for $d=1$. When $d\ge 2$, a phase transition happens (see \cite{Grimmett99} ) and there exists $p_c({\Z^d})\in(0,1)$ such that
\[
\theta_{\Z^d}(p) \begin{cases}
>0, \textrm{ if } p\in (p_c({\Z^d}) , 1)\\
=0, \textrm{ if } p\in (0, p_c({\Z^d})).
\end{cases}
\]
A major conjecture states that $\theta_{\Z^d}(p_c) = 0$ for all $d \geq 2$, but it is only proved for $d = 2$ (\cite{K80}) and $d \geq 11$ (\cite{HS90, FH17}). We also refer to the proceeding \cite{DC18} for more discussions.

When $d=1$, $\mathcal{C}(0)$ is a finite interval and $\p^\omega(\S)$ is the survival probability of a branching random walk in a strip. For a branching Brownian motion in a strip, this problem is studied by Harris-Hesse-Kyprianou \cite{HHK} and it turns out that $\p^\omega(\S)>0$ if and only if the length of the strip exceeds some threshold (depending on the offspring law). In higher dimensions, one can refer to Engl\"ander-Kyprinaou \cite{EK} which proves a dichotomy on local extinction and  local exponential growth for spatial branching processes in $D\subset\R^d$. %where the generalized principal eigenvalue for $L+\beta$ on the domain $D$ in the study of the spatial $(L,\beta;D)$-branching process. %it is natural to believe that $\p^\omega(S)>0$ if $\mathcal{C}(0)$ is sufficiently large. 

In our setting, it is natural to believe that $\p^\omega(\S)>0$ if $\mathcal{C}(0)$ is sufficiently large, especially if $\#\mathcal{C}(0)=\infty$. Nevertheless, on $\{\#\mathcal{C}(0)<\infty\}$, the survival probability depends on the competition between the branching mechanism and the killing rate upon leaving the finite domain $\mathcal{C}(0)$. In the general theory, for any fixed finite and connected set $A\subset\subset \Z^d$, we introduce the transition operator $T_A$ of the simple random walk in $A$ and killed when leaving $A$, acting on functions $f:A\to \R$ as follows
\begin{equation}
(T_Af)(y):=\frac{1}{2d}\sum_{\substack{|e|_1=1\\y+e\in A}}f(y+e).
\end{equation}
Let $\rho_A$ be the spectral radius of $T_A$. In Lemma 2.1, we will show that as long as $\#A\ge2$, $\rho_A\in(0,1)$. For singleton and empty set, set $\rho_{\{x\}}=\rho_\emptyset=0$. Further, we will see that the system has positive survival probability $\p^\omega(\S)>0$ on $\{\#\mathcal{C}(0)<\infty\}$ only if $\rho_{\mathcal{C}(0)} m_\circ >1$ with the convention that $0\cdot \infty =0$.

Now, let us state the second theorem, dealing with the case where  $\mu_\bullet = \delta_0$ and $m_\circ >1$.
%Moreover, we set
%\[
%\Theta(p, \mu_\circ):= \P_p\left( \omega : \p^\omega(\S) > 0 \right) .
%\]
%Immediately from Theorem \ref{thm: Sprobab} and the next theorem, we see that $\Theta(p, \mu_\circ) \in [\theta(p), p)$. 

\begin{thm}\label{thm: Sprobab+killing}
Suppose that $\mu_\bullet = \delta_0$ and $m_\circ >1$. %Let $d\ge 2$ and $p\in (p_c({\Z^d}) , 1)$. For $\P_p$-a.s. on $\{ \mathcal{C}(0) = \infty \}$,
\begin{itemize}
\item \textbf{Infinite cluster case.} If $\theta_{\Z^d}(p)>0$, then for $\P_p$-a.s. $\omega \in\{\#\mathcal{C}(0)=\infty\}$, we have
\begin{equation*}
\p^\omega(S) >0.
\end{equation*}
\item \textbf{Finite cluster case.} For any $p\in(0,1)$, $\P_p$-a.s., we have
\begin{equation*}
\{\omega : \p^\omega(S)>0\} \cap  \{\#\mathcal{C}(0)<\infty\} \overset{a.s.}{=} \{\omega:  \#\mathcal{C}(0)<\infty, \ \rho_{\mathcal{C}(0)}m_\circ >1\}.
\end{equation*}
Therefore, 
\begin{equation}
\P_p(\omega: \p^\omega(S)>0) = \theta_{\Z^d}(p) + \P_p\left( \omega:  \#\mathcal{C}(0)<\infty, \ \rho_{\mathcal{C}(0)}m_\circ >1 \right).
\end{equation}
\end{itemize}

\end{thm}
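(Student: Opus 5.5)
The proof rests on a single mechanism: a multitype (spatially indexed) Galton--Watson process on a finite set of types, whose mean matrix is $m_\circ T_A$. Fix a finite connected set $A\subset\mathcal{C}(0)$ containing $0$, and kill every particle that leaves $A$. The particles of generation $n$ staying in $A$ form a multitype branching process with type set $A$. Its mean matrix is $M_A(y,z)=m_\circ\frac{1}{2d}\mathbf{1}_{\{|z-y|_1=1\}}$, i.e.\ $M_A=m_\circ T_A$. Since $A$ is connected, this matrix is irreducible, though it may be periodic because the simple random walk is bipartite. Its Perron root is $m_\circ\rho_A$, with $\rho_A\in(0,1)$ by Lemma 2.1.

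The classical criterion for positively regular multitype Galton--Watson processes extends to irreducible ones, e.g.\ by passing to $M_A^2$ on each parity class, or by using the Perron eigenvector martingale $W_n=(m_\circ\rho_A)^{-n}\sum_{|u|=n}h(S_u)$ with $T_Ah=\rho_Ah$, $h>0$. The criterion says the process survives with positive probability iff $m_\circ\rho_A>1$. The non-singular case applies because $\mu_\circ\neq\delta_1$, which follows from $m_\circ>1$. So the first step is to check the standard multitype theory in this irreducible, possibly periodic setting, possibly via the $L\log L$-free statement that $\p(W_\infty>0)>0$ or, more simply, via a second moment argument after truncating the offspring law. Truncation keeps $m_\circ\rho_A>1$ if the cut-off is large enough, and it only reduces survival.

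\textbf{Finite cluster case.} On $\{\#\mathcal{C}(0)<\infty\}$, every particle leaving $\mathcal{C}(0)$ lands on a closed site and has $\delta_0$ offspring, so it never contributes descendants. Hence $\p^\omega(\S)$ equals the survival probability of the multitype process with $A=\mathcal{C}(0)$, with one adjustment: the particles sitting on closed boundary sites live one generation but do not change survival. The criterion then gives $\p^\omega(\S)>0$ iff $\rho_{\mathcal{C}(0)}m_\circ>1$. For $\#\mathcal{C}(0)\le1$ the conventions $\rho=0$ and $0\cdot\infty=0$ cover the degenerate case: if $\omega(0)=0$ the process dies at once, and if $\mathcal{C}(0)=\{0\}$ every child is killed after one more step. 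The identity of events holds pointwise in $\omega$, hence a.s.

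\textbf{Infinite cluster case.} If $\#\mathcal{C}(0)=\infty$, I would exhibit a finite connected $A\subset\mathcal{C}(0)$ containing $0$ with $\rho_Am_\circ>1$, and use monotonicity: killing outside $A$ only decreases survival, so $\p^\omega(\S)\ge$ survival of the $A$-process $>0$. The key claim, and the main obstacle, is that $\rho_{A_n}\to1$ along a sequence of connected subsets $A_n$ of an infinite cluster.

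The claim is false for arbitrary large sets: long thin paths give $\rho\to\cos(\pi/(k+1))\cdot\frac{2}{2d}=\frac1d\cos(\pi/(k+1))$, which is $<1$ for $d\ge2$. What is needed is large \emph{fat} regions: $\rho_A\ge 1-C/L^2$ whenever $A$ contains a full box of side $L$. This follows by testing the Rayleigh quotient of the symmetric matrix $T_A$ on a product-sine eigenfunction of the box. An infinite cluster a.s.\ contains arbitrarily large fully open boxes somewhere, since there are infinitely many disjoint box locations that are reachable, but they need not be adjacent to $0$. This is handled by taking $A=\gamma\cup B$, where $B$ is an open box of side $L$ with $(1-C/L^2)m_\circ>1$ lying in $\mathcal{C}(0)$, and $\gamma$ is an open path in $\mathcal{C}(0)$ from $0$ to $B$. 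Then $\rho_A\ge\rho_B$ by monotonicity of the Perron root in $A$. Survival in $A$ from $0$ has positive probability because, by irreducibility, with positive probability a particle reaches $B$ along $\gamma$.

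It remains to show that $\mathcal{C}(0)$ contains an open box of side $L$ a.s.\ on $\{\#\mathcal{C}(0)=\infty\}$. For this I would use ergodicity together with the uniqueness of the infinite cluster (Burton--Keane). Boxes of side $L$ that are fully open and touch the infinite cluster occur with positive density a.s. Since there is a unique infinite cluster, on $\{\#\mathcal{C}(0)=\infty\}$ such boxes belong to $\mathcal{C}(0)$.
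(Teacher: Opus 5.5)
Your proposal is correct and is essentially the paper's own argument. The finite-cluster case is Proposition~\ref{brw+killing} (the multitype Galton--Watson criterion applied at even times on one parity class, truncating if $m_\circ=\infty$), and the infinite-cluster case combines three ingredients: a large fully open region inside $\mathcal{C}(0)$, obtained from positivity, translation invariance and uniqueness of the infinite cluster as in Lemma~\ref{percolation}; $\rho\to1$ on fat regions; and an open path from the origin.

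Your variations are harmless: boxes with the product-sine eigenfunction instead of $\ell^1$-balls, and monotonicity of the Perron root on $\gamma\cup B$ instead of steering one particle into the ball. Two small points need fixing. First, justify the positive density by FKG, $\P_p(B\text{ fully open},\ \#\mathcal{C}(y)=\infty)\ge p^{\#B}\theta_{\Z^d}(p)$ for $y\in B$. Second, $\p(W_\infty>0)>0$ is not $L\log L$-free, so it is your truncation route that does the work there.
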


When $\p^\omega(S)=0$ for $\P_p$-a.s. $\omega$, the system dies out eventually. So, following Theorem 1.1 part-1, if $\max\{m_\circ, m_\bullet\}\le 1$ and $\min\{\mu_\circ(1), \mu_\bullet(1) \}<1$, it is natural to study the Kolmogorov estimate, i.e., the decay of the survival probabilities up to time $n$:
\begin{equation*}
\p^\omega(Z_n \ge 1) \textrm{ and } \p(Z_n \ge 1),
\end{equation*}
as well as the corresponding Yaglom theorems. 

In one special case when $\mu_\circ = \frac12\delta_0+ \frac12\delta_2$ and $\mu_\bullet=\delta_1$, Engl\"ander and Peres \cite{EP} proved that for $\P_p$-a.s. $\omega$, 
\[
\p^\omega(Z_n\ge 1) \sim \frac{2}{pn}. 
\]
And they conjecture that quenched Yaglom theorem also holds in their setting. Further, by numerical simulation, Engl\"ander and Sieben \cite{ES} conjecture that the annealed Kolmogorov estimate behaves differently from the classical model and depends on the dimension $d$. More precisely, in dimension $d=1$, Conjecture 6.1 in \cite{ES} says that there exists some positive constant $C>0$ such that as $n\to\infty$,
\begin{align}\label{eq.conjecture6.1}
    \p(Z_n \ge 1) = \frac{2}{pn } + \frac{C+o(1)}{ n^{3/2}},
\end{align}
whereas for the classical critical Galton-Watson process with offspring $\frac12\delta_0+ \frac12\delta_2$, $\p(Z_n \ge 1) = \frac{2}{n}+ O(\frac{(\log n)^2}{n^2})$, see \cite{VZ} or Lemma 2.2 of \cite{PRR}.

In another special case when $\mu_\bullet = \delta_1$ and $\mu_\circ$ is subcritical with $m_\circ\in(0,1)$, Engl\"ander and Peres \cite{EP} obtain that for there exists positive constant $C_{d,p}$ which does NOT dependent on $\mu_\circ$ such that for $\P_p$-a.s. $\omega$, 
\begin{equation}\label{sub-EP}
\p^\omega( Z_n \ge 1 ) = \exp\left( - [C_{d,p} +o(1)]\frac{n}{(\log n)^{2/d}} \right), \textrm{ as } n\to\infty.
\end{equation}
This is closely related to the random walk in random obstacles. And the Kolmogorov estimate is of totally different decaying compared with the classical subcritical Galton-Watson process. 

In the present paper, we aim to prove the Yaglom theorem conjectured by Engl\"ander and Peres~\cite{EP}, focusing on the \textit{critical} case under the following assumption:
\begin{condition}\label{A1}
\begin{enumerate}
\item \textbf{Criticality:}
\begin{equation*}
m_\circ = m_\bullet =1.
\end{equation*}
\item \textbf{Non-denegeracy:}
\begin{equation*}
\min\{\mu_\circ(1), \mu_\bullet(1) \}<1.
\end{equation*}
\item \textbf{Finite variance:}
\begin{equation*}
\sigma^2_\circ := \sum_{k\ge0} k^2\mu_\circ(k) -1 <\infty,\textrm{ and } \sigma_\bullet^2:=\sum_{k\ge0}k^2\mu_\bullet(k) -1 <\infty.
\end{equation*}
\end{enumerate}

\end{condition}

Before stating the Yaglom theorem, we first give a generalized version of Kolmogorov estimate which will be proved by use of spinal decomposition.

\begin{thm}\label{thm: UKprobab}
Under the Assumption \ref{A1}, for any fixed $0<a<b<\infty$ and for $\P_p$-a.s. environment $\omega$, we have
\begin{equation}
\lim_{n\to\infty}\sup_{|x|_1\le n} \sup_{\substack{m\in\N\\ an\le m\le bn}}|m\p^\omega_x(Z_m>0) - \frac{2}{p\sigma^2_\circ + (1-p)\sigma^2_\bullet}|=0.
\end{equation}

\end{thm}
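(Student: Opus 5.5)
Under the quenched law $\p^\omega_x$ we have $\e^\omega_x[Z_m]=1$ because both laws are critical. The natural tool is therefore the size-biased measure $\Q^\omega_x$ with density $Z_m$ on $\mathcal F_m$, together with its spinal decomposition. Under $\Q^\omega_x$ the spine $(w_k)_{k\le m}$ has position $(S_{w_k})$ equal to a simple random walk started at $x$. The spine particle at $S_{w_k}$ has size-biased offspring law $\hat\mu_{\omega(S_{w_k})}$, and its other children start independent copies of the quenched BRW. Since $\p^\omega_x(Z_m>0)=\Q^\omega_x[1/Z_m]$, the plan is to show that
\[
\Q^\omega_x[1/Z_m]\approx \frac{2}{m\,(p\sigma^2_\circ+(1-p)\sigma^2_\bullet)},
\]
uniformly in the stated ranges.

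First I will establish a crude a priori bound $\p^\omega_y(Z_k>0)\le C/k$, uniform over $y$ with $|y|_1\le 2bn$ and $k\le bn$. This comes from the second moment inequality $\p(Z_k>0)\ge 1/\e[Z_k^2]$ together with an upper bound from a quenched version of the Kolmogorov recursion. For the latter, let $q_k(y)=\p^\omega_y(Z_k>0)$. It satisfies
\[
q_{k+1}(y)=1-f_{\omega(y)}\bigl(1-(Pq_k)(y)\bigr),
\]
where $P$ is the SRW transition and $f_\circ,f_\bullet$ are the generating functions. The upper bound $q_{k+1}\le Pq_k - c\,(Pq_k)^2$ holds at every site with a uniform $c>0$ (using non-degeneracy of $\mu$ on at least one type) only after averaging along the walk, so I will use the decomposition in which $\sum_k$ of $\frac12\sigma^2_{\omega(S_k)}$ along an SRW path grows linearly. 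Concretely, write the standard identity
\[
\frac1{q_m(x)}=\frac1{\text{initial}}+\e\Bigl[\sum_{k}\tfrac12\sigma^2_{\omega(S_k)}\,(1+\text{error}_k)\Bigr],
\]
which I obtain by iterating $1/(1-f(1-s))=1/s+\frac{\sigma^2}{2}+O(\epsilon(s))$ along the SRW. The nonlinearity means this is exact only up to Jensen-type errors controlled by the crude bound.

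The core step is a quenched ergodic theorem for the random scenery sum
\[
A_m:=\frac1m\sum_{k<m}\sigma^2_{\omega(S_k)}\to p\sigma^2_\circ+(1-p)\sigma^2_\bullet ,
\]
required in a strong form. It must hold in $\P^{SRW}_x$-probability (or $L^1$), uniformly over starting points $|x|_1\le n$ and times $m\in[an,bn]$, for $\P_p$-a.e.\ $\omega$. I will get it from the variance bound
\[
\E_p\bigl[\mathrm{Var}\text{-type quantity}\bigr]\le C\,m^{-1}\,\e[\text{number of self-intersections}]/m ,
\]
which by local times is $O(m^{-1/2})$ in $d=1$, $O(\log m/m)$ in $d=2$ and $O(1/m)$ in $d\ge3$. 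Concretely, $\E_p\bigl(E^{SRW}_x[A_m]-\bar\sigma^2\bigr)^2$ is bounded via the collision local time of two independent walks. A polynomial bound with exponent $1/2$ plus a union bound over the $O(n^{d+1})$ pairs $(x,m)$ does not suffice directly. I will therefore use a higher moment, say the $2r$-th, of the centered scenery sum (Gaussian-like concentration for i.i.d.\ bounded scenery via Hoeffding conditionally on the walk's local times, giving tails $\exp(-c\epsilon^2 m^2/\sum_z \ell_m(z)^2)$). Then Borel–Cantelli over $n$ and the polynomially many $(x,m)$ gives the uniform statement almost surely.

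Finally, I will plug the ergodic estimate into the spinal representation. Under $\Q^\omega_x$, $Z_m$ is the sum over spine times $k$ of the subtrees grafted there. Given the spine, the number of such subtrees surviving to time $m$ is approximately Poisson-like, with contributions of order $(\hat N_k-1)\,q_{m-k}$, and each survivor has size about $(m-k)$ times an exponential. A standard argument in the style of Lyons–Pemantle–Peres then yields $m\,\Q^\omega_x[1/Z_m]\to 2/\bar\sigma^2$. The point is that the size-biased variance averaged along the spine path is exactly $\frac1m\sum\sigma^2_{\omega(S_k)}$, which converges by the previous step. To close the argument I will use an induction or bootstrap: the crude bound $q_k\le C/k$ first gives tightness, and then a self-consistent equation for $\lim k q_k$ along the spine forces the constant. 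The main obstacle I expect is this last bootstrap. It needs the asymptotics of $q_{m-k}(S_{w_k})$ at random spine positions, which is why the uniformity over $|x|_1\le n$ and $m\in[an,bn]$ is built into the statement. The near-endpoint times $k$ close to $m$ must be handled separately using only the crude bound.
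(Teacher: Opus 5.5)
There is a genuine gap at two points: the crude upper bound, and the step that identifies the constant.

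\textbf{The crude upper bound.} The bound $q_k\le C/k$ does not follow from iterating the recursion along the simple random walk. (Your $q_n$ is the paper's $p^\omega_n$.) Write $\phi_y(s):=\frac{1}{1-f_{\omega(y)}(1-s)}-\frac1s$. This is nonnegative by criticality, bounded, and tends to half the offspring variance at $y$ as $s\downarrow0$. The recursion then reads exactly
$1/q_{k+1}(y)=1/(Pq_k)(y)+\phi_y\big((Pq_k)(y)\big)$.
Since $1/(Pq_k)\le P(1/q_k)$, iterating along the walk gives only
$1/q_m(x)\le 1+E^{\mathrm{SRW}}_x\big[\sum_{k<m}\phi_{S_k}(\cdot)\big]$.
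That is a \emph{lower} bound on $q_m$, which you already have from the second moment. The direction you need requires the Jensen gap $P(1/q_k)-1/(Pq_k)\approx \mathrm{Var}_P(q_k)/(Pq_k)^3$ to be $o(1)$ per step. That means neighbour oscillations of $q_k$ must be $o(k^{-3/2})$. A two-sided bound $c/k\le q_k\le C/k$ only bounds the gap by $O(k)$. So these errors are not ``controlled by the crude bound'', and invoking that bound to prove itself is circular. The exact form of your identity is
\[
\frac{1}{q_m(x)}=1+\widetilde E_x\Big[\sum_{k<m}\phi_{\widetilde S_k}\big((Pq_{m-k-1})(\widetilde S_k)\big)\Big],
\]
where $\widetilde S$ is the Doob transform with transition probabilities proportional to $q_{m-k-1}(y+e)$. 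This walk is exactly the spine conditioned on $\{L_m=0\}$ (the paper's $\Xi^{(m)}$), not the simple random walk. Your quenched ergodic theorem therefore cannot be applied to it directly.

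\textbf{Identifying the constant.} Computing $m\,\widehat{\e}^\omega_x[1/Z_m]$ from ``Poisson-like'' surviving subtrees of size ``$(m-k)$ times an exponential'' needs two things: the limit law of $Z_m/m$ under the size-biased measure, and uniform integrability of $m/Z_m$. That is a Yaglom theorem for the grafted subtrees, uniform in their random starting points. This is more than what is being proved; the paper derives Theorem~\ref{thm: Yaglom} \emph{from} this estimate. The ``self-consistent equation'' meant to fix the constant is also never written down, so the argument does not close.

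\textbf{The missing idea.} What you need is the leftmost-spine identity \eqref{keyeq}: $\p^\omega_x(Z_n\ge1)=\widehat{\p}^\omega_x(L_n=0)$, and
$1=\widehat{\e}^\omega_x[\ind{L_n=0}R_n]=\widehat{\e}^\omega_x\big[\prod_j(1-p^\omega_{n-j}\ast\nu(S_{\zeta_{j-1}}))^{l_j}\,(1+\sum_j r_j)\big]$.
This is a first-moment identity under the \emph{untilted} spine. It is the display above in disguise, since $\phi_y(s)=\widehat{\e}[r_j(1-s)^{l_j}]/\widehat{\e}[(1-s)^{l_j}]$ when $S_{\zeta_{j-1}}=y$.

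Because $p^\omega_n(x)\ge1/(C_0n+1)$ by \eqref{lwbdp}, any spine event of probability $o(1/n)$ stays negligible after weighting by the survival factor. An example is $\{\#I_\circ(n)\le(p-\varepsilon)n\}$, controlled by \eqref{RWRS-LDP} or by your Hoeffding bound. The argument then runs as follows:
\begin{enumerate}
\item Combined with \eqref{bdlr}, this gives the crude bound $p^\omega_n(x)\le c^\omega_3/n$ without circularity.
\item For $j\le n-n^{4/5}$ the crude bound gives $p^\omega_{n-j}\ast\nu=o(1)$, and \eqref{asymplr} decouples $r_j$ from the survival factor.
\item A second use of the concentration bound replaces $\overline{I}(n)$ by $\overline{\sigma}^2 n$, giving $p^\omega_n(x)\sim 2/(\overline{\sigma}^2n)$ uniformly.
\end{enumerate}
No Yaglom input or bootstrap is needed.

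Your scenery-concentration step (Hoeffding given local times, then Borel--Cantelli) is sound. It slots in here as an elementary substitute for the large-deviation input. You must keep its quantitative tail, $o(1/n)$ uniformly in $x$, and not just convergence in probability, because it has to survive the factor $1/p^\omega_n(x)$.
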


Next result is the quenched Yaglom theorem.

\begin{thm}\label{thm: Yaglom}
Under the Assumption \ref{A1}, for any $x\in\Z^d$ and for $\P_p$-a.s. $\omega$,
\[
 \lim_{n\to\infty}\p_x^\omega\!\left(
 \frac{Z_n}{\{p\sigma_\circ^2+(1-p)\sigma_\bullet^2\}n/2}\le t
 \ \middle|\ Z_n>0\right)
 =F_{\mathrm{Exp}(1)}(t),\quad \forall t\in\R,
\]
where
\[
 F_{\mathrm{Exp}(1)}(t)=
 \begin{cases}
 0,&t<0,\\
 1-e^{-t},&t\ge0.
 \end{cases}
\]
\end{thm}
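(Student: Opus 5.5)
Write $\Sigma:=p\sigma_\circ^2+(1-p)\sigma_\bullet^2$ and $c_n:=\Sigma n/2$. The plan is to prove convergence of the conditional Laplace transform,
\[
\e_x^\omega\bigl[e^{-\lambda Z_n/c_n}\,\big|\,Z_n>0\bigr]\longrightarrow \frac{1}{1+\lambda},\qquad \lambda>0,
\]
for $\P_p$-a.s.\ $\omega$. This identifies the limit as $\mathrm{Exp}(1)$.

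Since $\p_x^\omega(Z_n>0)\sim 2/(\Sigma n)$ by Theorem \ref{thm: UKprobab}, the Laplace statement is equivalent to
\[
n\,\e_x^\omega\bigl[1-e^{-\lambda Z_n/c_n}\bigr]\to \frac{2}{\Sigma}\cdot\frac{\lambda}{1+\lambda}.
\]

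\textbf{Step 1: a recursion in time.} Fix $\lambda>0$ and set
\[
u_k^{(n)}(y):=\e_y^\omega\bigl[1-e^{-\lambda Z_k/c_n}\bigr].
\]
The branching property gives $1-u_{k+1}(y)=f_{\omega(y)}\bigl(1-P u_k(y)\bigr)$. Here $P$ is the simple random walk operator and $f_\circ,f_\bullet$ are the generating functions. Since $f(1-s)=1-s+\tfrac{\sigma^2}{2}s^2(1+o(1))$ by Assumption \ref{A1}, we get
\[
u_{k+1}(y)=Pu_k(y)-\tfrac{\sigma^2_{\omega(y)}}{2}\,(Pu_k(y))^2(1+o(1)).
\]
At $k=0$ we have $u_0\equiv 1-e^{-\lambda/c_n}\approx 2\lambda/(\Sigma n)$. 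All $u_k$ then stay $O(1/n)$ on the relevant times, uniformly in $y$. This uses $u_k\le \p^\omega_y(Z_k>0)\wedge u_0$ together with Theorem \ref{thm: UKprobab}.

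The deterministic analogue $v_{k+1}=v_k-\tfrac{\Sigma}{2}v_k^2$ with $v_0=2\lambda/(\Sigma n)$ has
\[
v_n\approx \frac{1}{\Sigma n/2+1/v_0}=\frac{2}{\Sigma n}\cdot\frac{\lambda}{1+\lambda},
\]
which is exactly the target.

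\textbf{Step 2: averaging the environment.} The key step is to show that the variance coefficient $\sigma^2_{\omega(\cdot)}$ felt along the dynamics averages to $\Sigma$. I would use the same route as for Theorem \ref{thm: UKprobab}: the spinal (many-to-one) representation. Iterating the recursion and using criticality, so that the mean measure is simply random walk,
\[
u_n(x)=u_0-\sum_{k<n}E_x\Bigl[\tfrac{\sigma^2_{\omega(X_{k})}}{2}\,\bigl(Pu_{n-k-1}(X_k)\bigr)^2(1+o(1))\Bigr],
\]
with $X$ a simple random walk. Equivalently, consider $1/u$ along the spine.

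I would prove that $u_j(y)\cdot j$, uniformly over $|y|_1\le 2n$ and $j\in[\varepsilon n,n]$, is close to a function of $j/n$ alone. The argument is an induction or fixed-point over time blocks, mirroring the uniformity in $x$ and in $m\in[an,bn]$ stated in Theorem \ref{thm: UKprobab}.

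Given that near-homogeneity, the sum becomes
\[
\sum_k \tfrac12\,E_x\bigl[\sigma^2_{\omega(X_k)}\bigr]\,g_k^2 .
\]
The ergodic average $\tfrac1n\sum_k \sigma^2_{\omega(X_k)}$ converges to $\Sigma$ in quenched $E_x$-probability. This is a random walk in random scenery law of large numbers, and it holds $\P_p$-a.s.\ uniformly in starting points $|x|_1\le n$; in dimension $d=1$ the scenery LLN is the most delicate case, but it still holds. It yields the discrete ODE $g'=-\tfrac{\Sigma}{2}g^2$, whose solution gives the limit. Times $k\le \varepsilon n$ and $k\ge(1-\varepsilon)n$ contribute $O(\varepsilon)$.

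\textbf{Step 3: conclusion.} Dividing by $\p_x^\omega(Z_n>0)$ gives the Laplace limit $1/(1+\lambda)$. Continuity of the $\mathrm{Exp}(1)$ distribution function then yields the stated convergence for every $t$.

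\textbf{Main obstacle.} The hardest part is the uniform quenched homogenization in Step 2. The difficulty is controlling the fluctuation of $u_j(y)$ in $y$, because it is correlated with the local environment near $y$. I would try to bootstrap the uniform estimate from Theorem \ref{thm: UKprobab}, whose proof presumably already handles the analogous averaging for $\lambda=\infty$. The plan is to rerun that argument with a general initial condition $u_0=2\lambda/(\Sigma n)$, using monotonicity in $\lambda$ and sandwiching the solution between the ODE solutions with coefficients $\Sigma\pm\varepsilon$.
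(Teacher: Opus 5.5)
Steps 1 and 3 are fine: the recursion, the bound $u_k\le u_0$ (Jensen plus $\e^\omega_y[Z_k]=1$), and the ODE heuristic all check out. Step 2, however, is not a proof. The uniform near-homogeneity of $j\,u_j(y)$ over $|y|_1\le 2n$ carries the entire difficulty, and neither tool you propose for it works.

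First, the sandwiching argument fails. The recursion $u_{k+1}(y)=1-f_{\omega(y)}(1-Pu_k(y))$ is monotone, so a comparison principle is available. But a spatially constant super- or subsolution must satisfy its inequality at every site. This forces the coefficient $\min\{\sigma^2_\circ,\sigma^2_\bullet\}$ (respectively $\max$), not $\Sigma\pm\varepsilon$. The average $\Sigma$ only appears along trajectories, so getting it by comparison would require environment-dependent correctors — which is precisely the homogenization you have not done.

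Second, rerunning the argument of Theorem~\ref{thm: UKprobab} does not transfer. That proof is driven by the identity $1=\widehat{\e}^\omega_x[\ind{L_n=0}R_n]$, which comes from the leftmost-particle trick for $\ind{Z_n\ge1}$. You give no analogue for $1-e^{-\lambda Z_n/c_n}$. Without near-homogeneity you cannot pull $Pu_{n-k-1}(X_k)$ out of $E_x[\cdot]$. As you note yourself, its fluctuations in $y$ are correlated with $\sigma^2_{\omega(X_k)}$, so the scenery LLN alone does not close the argument.

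The missing idea is that no homogenization is needed. The branching property at generation $n$ turns the Laplace functional into a survival probability at a later time. With $m=\lfloor n/\lambda\rfloor$,
\[
p^\omega_{n+m}(x)=\e^\omega_x\Bigl[1-\prod_{|u|=n}\bigl(1-p^\omega_m(S_u)\bigr)\Bigr].
\]
Since $|S_u|_1\le |x|_1+n$, Theorem~\ref{thm: UKprobab} (uniform in the starting point and in $m$ over a window of order $n$) gives $p^\omega_m(S_u)=\frac{\lambda}{c_n}(1+o^\omega_n(1))$ uniformly in $u$. The replacement errors are then controlled by three facts:
\begin{itemize}
\item $\e^\omega_x[Z_n]=1$;
\item $s^{Z}-t^{Z}\le Z(s-t)$ for $0\le t\le s\le1$;
\item $0\le e^{-aZ}-(1-a)^Z\le a^2Z$ for small $a\ge0$.
\end{itemize}
All errors are therefore $o(1/n)$, so
\[
\e^\omega_x\bigl[1-e^{-\lambda Z_n/c_n}\bigr]=p^\omega_{n+m}(x)+o(1/n)=\frac{2}{\Sigma n}\,\frac{\lambda}{1+\lambda}\,(1+o(1)),
\]
which is exactly your target. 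The same computation at $(j,y)$ also gives the near-homogeneity you wanted, so Step 2 becomes superfluous.

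With this repair, your Laplace-transform route is genuinely different from the paper's and much shorter. The paper instead bounds the Wasserstein distance to $\mathrm{Exp}(1)$ by Stein's method. It couples the equilibrium variable $R_n-U$ with a copy of $Z_n$ conditioned on survival through a two-spine construction. It also needs Lemma~\ref{couple-spine}: the spine conditioned on $\{L_n=0\}$ is close in total variation to simple random walk up to time $(1-\delta)n$.
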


The usual approach to proving a Yaglom theorem is to compute the moments or the generating function of $Z_n$. However, this becomes difficult in a random environment where the offspring law depends on the spatial location. In this work, we shall prove the weak convergence to the exponential distribution using Stein's method. More precisely, our proof relies on Theorem 2.1 of \cite{PR} and Theorem 3.5 of \cite{CJP}, which provides an upper bound for the Wasserstein distance between the exponential law and the law of any non-negative random variable \(W\) with finite second moment.

The remainder of this paper is organized as follows. In Section \ref{prel}, we collect and prove several results for random walk with absorption and random walk in random scenery, we also introduce the spinal decomposition and Lyons' change of measure under quenched law. In Section \ref{survival}, we study the survival probabilities and prove Theorems \ref{thm: Sprobab} and \ref{thm: Sprobab+killing}. In Section \ref{critical}, we deal with the Kolmogorov estimate and Yaglom theorem, establishing Theorems \ref{thm: UKprobab} and \ref{thm: Yaglom}. Section \ref{lems} contains proofs of several technical lemmas. 

We use $c_0, c_1,c_2,\cdots$ to represent positive and finite real constants, and use $c_i(\varepsilon)$ for constants depending on $\varepsilon$. As usual, $a_n =o_n(1)$ means that $a_n\to 0$ as $n\to\infty$. For given environment $\omega$, $c_i^\omega$ and $c_i^\omega(\varepsilon)$ are used for constants depending on $\omega$. $a_n=o_n^\omega(1)$ means that $a_n$ converges $\P_p$-a.s. to zero as $n\to\infty$.

\section{Preliminaries}\label{prel}

\subsection{Preliminary results on random walk with absorption}

In this section, we study the simple random walk with killing or absorption when leaving some finite and connected set $A\subset \Z^d$. 

Take $x\in\Z^d$. Let $(S_n)_{n\ge0}$ be the simple random walk in $\Z^d$, with motion law $\nu$, starting from $S_0=x$. Denote its law by $\Q_x$. For the finite and connected set $A\subset\Z^d$, define the exiting time from $A$ by 
\begin{equation*}
\tau_A:=\inf\{ n\ge 0: S_n\notin A\}.
\end{equation*}
We are interested in $(S_n)$ killed at $\tau_A$. Then the corresponding transition operator $T_A$ is defined on $\{f: A \to \R\}$ such that
\[
(T_Af)(y) := \frac{1}{2d}\sum_{|e|_1=1} f(y+e) \ind{y+e\in A}, \quad \forall y\in A.
\]
In other words, for any $x,y\in A$, and for $f(\cdot) = \delta_x(\cdot)$,
\[
(T_A\delta_x)(y) = \Q_x(S_1=y) = \frac{1}{2d} \ind{|y-x|_1=1}.
\]
So, $T_A$ can be viewed as a symmetric matrix, whose all eigenvalues are real numbers. Let $\rho_A$ be its spectral radius, i.e.,
\[
\rho_A:= \max\{ |\lambda | : \lambda \textrm{ is an eigenvalue of } T_A\}.
\]
Since $T_A$ is sub-Markovian, we always have $0\le \rho_A\le 1$. In fact, we could obtain the following lemma. 

\begin{lem}\label{spectralradius}
If $A$ is finite and connected subset in $\Z^d$ with $\#A\ge 2$, then we have
\begin{enumerate}
\item $\rho_A\in (0,1)$.
\item for any $x\in A$, the limits 
\[
\lim_{n\to\infty} \frac{\Q_x(\tau_A>2n)}{\rho_A^{2n}} \textrm{ and } \lim_{n\to\infty} \frac{\Q_x(\tau_A> 2n +1)}{\rho_A^{2n+1}},
\]
exist and belong to $(0,\infty)$. In general, these two limits are different.
\item 
For every sequence of
finite connected sets $(A_k)$ with $R_{A_k}\to\infty$, we have
$\rho_{A_k}\to1$,
where $R_A$ denotes the radius of the largest $\ell^1$-ball contained in $A$, i.e., 
\[
R_A:= \max\{ R\in \N \vert \exists y\in A \textrm{ such that } \{z\in \Z^d: |z-y|_1\le R\}\subset A\}.
\]
\end{enumerate}

\end{lem}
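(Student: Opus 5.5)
We treat $T_A$ as the adjacency matrix of the induced subgraph $G_A$ on $A$, scaled by $\frac{1}{2d}$. The three parts then follow from Perron--Frobenius theory for nonnegative irreducible matrices together with a test-function (Rayleigh quotient) argument.

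\textbf{Part 1.} Since $A$ is connected and $\#A\ge2$, the matrix $T_A$ is nonnegative, symmetric, and irreducible, and it is not the zero matrix. By Perron--Frobenius, $\rho_A$ is a simple eigenvalue with a strictly positive eigenvector $\phi$. Irreducibility also forces $\rho_A>0$: for adjacent $x,y\in A$ the quadratic form gives
\[
\rho_A\ge\langle T_A v,v\rangle/\langle v,v\rangle=\tfrac{1}{2d}>0\quad\text{with } v=\delta_x+\delta_y .
\]
For the upper bound, $A$ is finite, so it has a boundary point $z$ with some neighbour outside $A$. At $z$ the row sum is $\sum_y T_A(z,y)<1$, while all row sums are $\le1$. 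Evaluate $T_A\phi=\rho_A\phi$ at a point $x_0$ where $\phi$ is maximal:
\[
\rho_A\phi(x_0)\le \phi(x_0)\sum_y T_A(x_0,y)\le \phi(x_0).
\]
Equality throughout would force $\phi$ to be constant on the neighbours of $x_0$ in $A$, with full row sum at $x_0$. Propagating along connected paths, $\phi$ would be constant on $A$ with all row sums equal to $1$, contradicting the existence of $z$. Hence $\rho_A<1$. (Alternatively, $T_A^n\to0$ because the walk exits the finite set $A$ a.s., so $\rho_A<1$.)

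\textbf{Part 2.} Write $\Q_x(\tau_A>n)=\sum_{y\in A}T_A^n(x,y)=(T_A^n\mathbf 1)(x)$ and use the spectral decomposition $T_A=\sum_i\lambda_i\psi_i\psi_i^{T}$. The graph $G_A$ is bipartite (a subgraph of $\Z^d$), so its spectrum is symmetric about $0$: $-\rho_A$ is also a simple eigenvalue, with eigenvector $\phi(y)(-1)^{|y|_1}$. Every other eigenvalue has modulus strictly less than $\rho_A$, by simplicity in Perron--Frobenius applied to the irreducible bipartite case (imprimitivity index $2$). Therefore
\[
\rho_A^{-n}(T_A^n\mathbf 1)(x)=\frac{\phi(x)}{\|\phi\|^2}\Bigl[\langle\phi,\mathbf 1\rangle+(-1)^{n+|x|_1}\langle\phi,(-1)^{|\cdot|_1}\mathbf 1\rangle\Bigr]+o(1).
\]
The limits along even and odd $n$ therefore exist and equal
\[
\frac{\phi(x)}{\|\phi\|^2}\sum_y\phi(y)\bigl(1\pm(-1)^{|x|_1+|y|_1}\bigr)=\frac{2\phi(x)}{\|\phi\|^2}\sum_{y:\,\text{parity}}\phi(y),
\]
where the sum runs over $y$ in one parity class. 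Each parity class of $A$ is nonempty because $\#A\ge2$ and $A$ is connected, and $\phi>0$, so both limits lie in $(0,\infty)$. They generally differ, since the two parity classes carry different $\phi$-mass.

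\textbf{Part 3.} Use the variational formula $\rho_A\ge \langle T_Af,f\rangle/\langle f,f\rangle$. By monotonicity ($A\supset B$ implies $\rho_A\ge\rho_B$, by Perron--Frobenius comparison), it suffices to show $\rho_{B_R}\to1$, where $B_R$ is the $\ell^1$-ball of radius $R$. Inside $B_R$, take the box $[-L,L]^d$ with $L=\lfloor R/d\rfloor$ and the test function $f(y)=\prod_{i}\cos\bigl(\pi y_i/(2L+2)\bigr)$. This $f$ is the exact Dirichlet eigenfunction of the box, with eigenvalue
\[
\frac1d\sum_i\cos\Bigl(\frac{\pi}{2L+2}\Bigr)=\cos\Bigl(\frac{\pi}{2L+2}\Bigr)\to1 .
\]
The main care is bookkeeping in this last step (the ball-to-box inclusion and the exact cosine eigenfunction), and in Part 2 justifying that $\pm\rho_A$ are the only eigenvalues of maximal modulus, which is the standard periodic Perron--Frobenius statement.
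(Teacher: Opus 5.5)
Your proof is correct, and each part follows a genuinely different route from the paper's.

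\textbf{Parts 1--2.} The paper sidesteps periodicity. It passes to the two-step operator $T_A^{\mathrm{even}}=BB^{\mathsf T}$ on one parity class, which is primitive, so it only needs the aperiodic Perron--Frobenius limit $(T_A^{\mathrm{even}})^l/\rho(T_A^{\mathrm{even}})^l\to K$ (rank one, positive entries). It then shows $\rho(T_A^{\mathrm{even}})=\rho_A^2$ from the block structure, deduces $\rho_A<1$ from the almost sure exit of the walk, and handles odd times by a one-step decomposition.

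You instead work with $T_A$ directly, via the index-$2$ Perron--Frobenius theorem and the bipartite symmetry $\phi\mapsto(-1)^{|\cdot|_1}\phi$. This buys explicit closed forms for the limits: $\frac{2\phi(x)}{\|\phi\|^2}$ times the $\phi$-mass of the parity class of $x$ (even times), resp.\ of the opposite class (odd times). That makes the ``generally different'' remark transparent, and you also get the quantitative bound $\rho_A\ge 1/(2d)$. On the other hand, the paper's parity reduction is exactly what is reused in Proposition~\ref{brw+killing}. There $(Z_{2n}(y))_{y\in A^{\mathrm{even}}}$ is treated as a positively regular multitype process with mean matrix $(\sum_k kp_k)^2T_A^{\mathrm{even}}$.

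\textbf{Part 3.} The paper proceeds in three steps:
\begin{itemize}
\item it computes the one-dimensional spectrum $\cos(\pi/(n+2))$;
\item it proves $\rho_A\ge\rho_{B_R(y)}$ probabilistically, using a connecting path plus $\rho_A=\lim_j\Q_x(\tau_A>j)^{1/j}$;
\item it compares the ball with $d$ independent one-dimensional walks through a random time change, getting $\rho_{B_R(0)}\ge\rho_I^{\,d}$ for an interval $I$ of length of order $2R/d$.
\end{itemize}
Your argument uses monotonicity of the Perron root plus the product-cosine eigenfunction on the inscribed box $[-L,L]^d$. It is shorter, purely spectral, and slightly sharper: it gives $\rho_{B_R}\ge\cos\bigl(\pi/(2\lfloor R/d\rfloor+2)\bigr)$ without the $d$-th power. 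In the paper that power comes from the crude bound $N_j(k)\le j$. The only step you leave implicit is the trivial translation invariance $\rho_{B_R(y)}=\rho_{B_R(0)}$.
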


We will prove this lemma in Section \ref{lems}. Based on this lemma, we establish a criterion for the survival probability of a branching random walk in $A$ with absorption when exiting $A$. 

Consider a branching random absorbed outside $A$ constructed as follows. Let $\mu = \sum_{k\in\N} p_k\delta_k$ be an offspring law. An ancestor particle is located at $x\in A$ at time $0$. At each time $n\ge1$, each alive particle is replaced independently by a random number of children according to $\mu$, and each newborn particle performs an independent simple random walk jump from its birth position. If a particle steps outside $A$, it gets killed immediately. The system continues to evolve like this.  Denote its law by $\p_x^{(A)}$. With a little abuse of notation, we still denote this process by $(S_u, u\in\T)$ and denote the number of alive particles at time $n$ by $Z_n$. Then the survival probability is 
\[
\p^{(A)}_x(S) = \p^{(A)}_x(Z_n \ge 1, \forall n\ge 0).
\]
Naturally, this model can be viewed as a multitype Galton-Watson tree by considering the position of a particle as its type. When $A$ is finite, this is a multitype system with finitely many types. We thus manage to establish the following proposition. Its proof is postponed to Section \ref{lems}.

\begin{prop}\label{brw+killing}
If $A$ is finite and connected subset in $\Z^d$ with $\#A\ge 2$, then for the branching random walk absorbed outside $A$, we have
\[
\p^{(A)}_x(Z_n \ge 1, \forall n\ge 0)>0, \forall x\in A,
\]
if and only if 
\begin{equation}
\left(\sum_k kp_k \right) \times \rho_A >1.
\end{equation}
\end{prop}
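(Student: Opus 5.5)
We treat the absorbed process as a multitype Galton--Watson process whose types are the sites of the finite set $A$, and apply the classical extinction criterion for finitely many types. Write $m:=\sum_k kp_k$. A particle of type $y\in A$ produces on average $m\cdot\frac{1}{2d}$ children at each neighbour $y+e\in A$; children that jump outside $A$ are killed. So the mean offspring matrix is
\[
M(y,z)=m\,\frac{1}{2d}\ind{|y-z|_1=1}, \qquad y,z\in A,
\]
that is, $M=m\,T_A$. In particular
\[
\e^{(A)}_x\Big[\sum_{|u|=n}\ind{S_u=y}\Big]=(M^n)(x,y)=m^n\,\Q_x(S_n=y,\tau_A>n).
\]
Since $A$ is connected with $\#A\ge2$, $M$ is irreducible. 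Its Perron--Frobenius root is $m\rho_A$, and by Lemma \ref{spectralradius} we have $\rho_A\in(0,1)$.

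We first treat $m\rho_A\le1$, where we show extinction. Let $h>0$ be the Perron eigenvector of $T_A$, so that $T_Ah=\rho_Ah$. Then $W_n:=(m\rho_A)^{-n}\sum_{|u|=n}h(S_u)$ is a nonnegative martingale. If $m\rho_A<1$, then
\[
\e^{(A)}_x[Z_n]\le \frac{\max h}{\min h}\,(m\rho_A)^n\to0,
\]
which gives extinction. In the critical case $m\rho_A=1$, $W_n=\sum_{|u|=n}h(S_u)$ converges a.s. to a finite limit. Because $h$ is bounded below on $A$, $Z_n$ must then eventually be constant. We then exclude a positive constant population, using the standard argument for nonsingular processes: on $\{Z_n=k \text{ forever}\}$ with $k\ge1$, at each step there is a probability bounded below that some change happens. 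Indeed, either $p_1<1$, so the number of children is not exactly one, or $p_1=1$, in which case $m=1$ and $\rho_A<1$ gives $m\rho_A<1$, contradicting criticality. Even with $p_1=1$, however, some particle jumps outside $A$ with probability at least $1/(2d)$ per step, since a finite set always has boundary. By the Lévy $0$--$1$ law, the event of staying constant forever has probability zero. Hence $\p^{(A)}_x(S)=0$.

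We next treat $m\rho_A>1$, where we show positive survival probability. Here the same martingale $W_n$ is used, now with $m\rho_A>1$. Its second moment is the delicate point, since $\mu$ may have infinite variance. To avoid this, we truncate: replace $\mu$ by $\mu^{(K)}$, the law of $\min(N,K)$. This has mean $m_K\uparrow m$, so $m_K\rho_A>1$ for $K$ large. By the obvious coupling, survival for the truncated process implies survival for the original one. For bounded offspring, the standard second-moment computation for a multitype branching process gives
\[
\sup_n \e[W_n^2]\le C\sum_n (m_K\rho_A)^{-n}<\infty.
\]
So $W_n$ is bounded in $L^2$ and converges in $L^2$ to $W_\infty$ with $\e[W_\infty]=h(x)>0$. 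Thus $\p(W_\infty>0)>0$, and since $W_\infty>0$ forces $Z_n\ge1$ for all $n$, we get survival. This holds for every starting point $x\in A$.

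Alternatively, both directions can be quoted at once from the classical theorem for positively regular or irreducible multitype Galton--Watson processes (Harris; Athreya--Ney): extinction is almost sure if and only if the Perron root is at most $1$, except in the singular case. The main obstacle is that $M$ may be periodic (bipartite $\Z^d$), so it is irreducible but not primitive, and the positively regular version does not apply directly. We handle this either by the martingale argument above, which uses only irreducibility, or by passing to the two-step process restricted to one parity class. Lemma \ref{spectralradius}(2) supplies the even/odd asymptotics that make this parity reduction work.
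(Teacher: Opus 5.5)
Your main argument is correct in outline but takes a different route from the paper, and it has one step in the critical case that does not follow as written.

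\textbf{Comparison of routes.} The paper never works with the one-step process. It passes to the two-step process $\{Z_{2n}(y):y\in A^{\text{even}}\}$, whose mean matrix $m^2T_A^{\text{even}}$ is positively regular. Its Perron root is $m^2\rho_A^2$, by the identity $\rho(T_A^{\text{even}})=\rho_A^2$ proved inside Lemma~\ref{spectralradius} (not by part (2) of that lemma). The paper then quotes the Athreya--Ney criterion; your closing ``alternative'' is exactly this. Your martingale $W_n=(m\rho_A)^{-n}\sum_{|u|=n}h(S_u)$, built on the Perron eigenvector of $T_A$, needs only irreducibility, so it avoids the bipartite periodicity altogether. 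Your subcritical part (first moment) and supercritical part (truncation, $L^2$-boundedness, $\e[W_\infty]=h(x)>0$) are correct and self-contained. The paper's route is shorter but delegates everything to the classical theorem, including its nonsingularity hypothesis, which the paper does not mention.

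\textbf{The gap in the critical case $m\rho_A=1$.} Almost sure convergence of $W_n$ together with $h\ge\min_A h>0$ gives $\sup_n Z_n<\infty$ a.s., and that $W_n$ is eventually constant. It does not give that $Z_n$ is eventually constant. Distinct configurations, even of different sizes, can carry the same $h$-weight. For example, for $A=\{0,\dots,4\}\subset\Z$ one has $h=(\frac12,\frac{\sqrt3}2,1,\frac{\sqrt3}2,\frac12)$, so one particle at $2$ weighs the same as particles at $0$ and $4$. Hence a uniform lower bound on $\p(Z_{n+1}\neq Z_n\mid\mathcal F_n)$ does not rule out a bounded, oscillating, surviving population. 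Your aside that some particle exits with probability at least $1/(2d)$ at every step is also false when all particles sit at interior sites, though that branch is vacuous, as you note.

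\textbf{How to repair it.} You need the following: for each $K$ there exist $N$ and $\eta_K>0$ such that, from any configuration of between $1$ and $K$ particles in $A$, the process dies within $N$ generations with probability at least $\eta_K$. If $p_0>0$, let every particle have no child. If $p_0=0$, let every particle have exactly $k_*=\min\{k:p_k>0\}$ children, all jumping along the unit vector $e_1$, and take $N$ larger than the width of $A$ in that direction, so that everything has left $A$. L\'evy's $0$--$1$ law then gives $\p(1\le Z_n\le K\ \forall n\ge n_0)=0$ for all $K,n_0$. Together with $\sup_n Z_n<\infty$ a.s., this yields extinction. This is precisely the nonsingularity input hidden inside the classical theorem the paper cites. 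The gap is local and the repair is standard, but as written the critical case is not proved.
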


Note that, if the branching mechanism is supercritical, i.e., $\sum_k kp_k\in(1,\infty]$, then for all large enough $\ell^1-$ball $B$ in $\Z^d$, the branching random walk absorbed outside $B$ survives with positive probability. 

\subsection{Preliminary results on random walk in random scenery}\label{RWRS}

In this section, we recall some classical results on random walk in random scenery. In our setting, $\omega = (\omega(x))_{x\in\Z^d}$ are i.i.d. Bernoulli random variables with parameter $p\in(0,1)$. Recall that $(S_n)_{n\ge0}$ is simple random walk in $\Z^d$ under $\Q_x$. Let $\mathbb{Q}_x=\Q_x \otimes \P_p$ so that $\omega$ and $(S_n)_{n\ge0}$ are independent under $\mathbb{Q}_x$. The associated \textit{random walk in random scenery} is defined by 
\[
T_n := \sum_{j=0}^{n-1} \omega(S_j), \forall n\ge 1.
\]
It is well known that $\mathbb{Q}_x(T_n\in\cdot)=\mathbb{Q}_0(T_n\in\cdot)$ and that $\mathbb{Q}_x$-a.s., $\frac{T_n}{n}$ converges to $p$, see for instance \cite{KS}. Further, Gantert et al. \cite{GKS} established the annealed large deviation principle for $T_n$. We state the following theorem here.

\begin{thm}[ Theorem 1.3 of \cite{GKS}]\label{LDP-RWRS}
For any $\varepsilon\in (0, p\wedge (1-p))$, there exist positive constants $c_1(\varepsilon)$ and $c_2(\varepsilon)$ such that for any $x\in\Z^d$,
\begin{align*}
\lim_{n\to\infty} n^{-\frac{d}{d+2}}\log \mathbb{Q}_x\left( \frac{T_n}{n} > p+\varepsilon\right) = &- c_1(\varepsilon),\\
\lim_{n\to\infty} n^{-\frac{d}{d+2}}\log \mathbb{Q}_x\left( \frac{T_n}{n} < p-\varepsilon\right) = &- c_2(\varepsilon).
\end{align*}
\end{thm}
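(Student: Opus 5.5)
Since this result is quoted from \cite{GKS}, the following only sketches how one would recover it. The constants it produces need not coincide with those of \cite{GKS}.

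\textbf{Reduction.} Under $\mathbb{Q}_x$ the pair (walk, scenery) is translation invariant, so $\mathbb{Q}_x(T_n\in\cdot)=\mathbb{Q}_0(T_n\in\cdot)$, and it suffices to treat $x=0$. By the symmetry $\omega\mapsto 1-\omega$, which exchanges $p$ and $1-p$ and maps $T_n$ to $n-T_n$, the lower-deviation statement follows from the upper-deviation one. So it is enough to study $\mathbb{Q}_0(T_n>(p+\varepsilon)n)$.

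Next, integrate out the scenery given the walk. Write $\ell_n(y)=\#\{j<n:S_j=y\}$ for the local times. Then
\[
\mathbb{Q}_0\bigl(T_n>(p+\varepsilon)n\bigr)=\E_{\Q_0}\Bigl[\P_p\Bigl(\sum_y \omega(y)\ell_n(y)>(p+\varepsilon)n\Bigr)\Bigr].
\]
Two mechanisms compete here.
\begin{itemize}
\item If the range $R_n=\#\{y:\ell_n(y)>0\}$ is large and the local times are spread out, Hoeffding/Chernoff bounds over the independent $\omega(y)$ make the inner probability at most $\exp(-c\,n^2/\sum_y \ell_n(y)^2)$. This is tiny unless the walk is concentrated.
\item If the walk is confined to a ball of radius $r$, only about $r^d$ scenery variables matter. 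Making their empirical density exceed $p+\varepsilon$ costs $e^{-c r^d}$, while confining the walk costs $e^{-c n/r^2}$.
\end{itemize}
Optimizing $r^d+n/r^2$ gives $r\asymp n^{1/(d+2)}$ and total cost $\asymp n^{d/(d+2)}$, which explains the scale.

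\textbf{Lower bound.} Fix $r=\lambda n^{1/(d+2)}$. On the event that the ball $B_r$ has all sites open, which has probability $p^{c r^d}$, the inner probability equals $1$ as long as the walk stays in $B_r$. A softer version requiring only density $p+2\varepsilon$ in $B_r$ costs $e^{-I(p+2\varepsilon)\,c r^d}$, with $I$ the Bernoulli rate function. The walk stays in $B_r$ up to time $n$ with probability $\ge e^{-(\lambda_1(B_1)+o(1))\,n/r^2}$ by the principal-eigenvalue asymptotics; this is the continuum analogue of Lemma~\ref{spectralradius}. Together these give $\liminf n^{-d/(d+2)}\log\mathbb{Q}_0(\cdot)\ge -c$.

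\textbf{Upper bound and existence of the limit.} Here I would follow Donsker--Varadhan. Apply an exponential Chebyshev bound conditionally on the walk:
\[
\e\bigl[e^{\theta T_n}\bigr]=\E_{\Q_0}\Bigl[\prod_y\bigl(1-p+pe^{\theta\ell_n(y)}\bigr)\Bigr].
\]
Then rescale space by $n^{1/(d+2)}$ and time by $n^{2/(d+2)}$, so that the local-time profile becomes a density on $\R^d$. Combining the Donsker--Varadhan LDP for the scaled occupation measure with a Varadhan-type functional bound yields a variational formula of the form
\[
c_1(\varepsilon)=\inf_{\phi}\Bigl\{\tfrac12\|\nabla\phi\|_2^2+J_\varepsilon(\operatorname{supp}\text{ profile})\Bigr\},
\]
which is finite and positive.

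\textbf{Main obstacle.} The hard step is the upper bound. The occupation-measure LDP lives on a noncompact space, so one must compactify: either periodize on a torus of side $M n^{1/(d+2)}$, or use the Donsker--Varadhan/Bolthausen compactification for the range. One must also show that walks with large range contribute negligibly at scale $n^{d/(d+2)}$. To obtain the existence of the limit rather than just matching bounds, one can alternatively prove a superadditivity estimate for $\log\mathbb{Q}_0(T_n>(p+\varepsilon)n)$ along the scaled sequence. This uses the strategy of concatenating excursions inside a common favourable ball, followed by a Fekete-type argument.
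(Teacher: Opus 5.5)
The paper gives no proof of this statement. It imports it from \cite{GKS}, and downstream it only uses the upper-bound half, as a summable stretched-exponential estimate in Lemma~\ref{Q-SLLN} and \eqref{RWRS-LDP}. Your reductions (translation invariance, $\omega\mapsto 1-\omega$) are fine. Your lower bound (all-open ball of radius $\lambda n^{1/(d+2)}$ plus confinement) is correct but only gives the right order. The statement asserts \emph{existence of the limit}, and there your sketch has a genuine gap.

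\textbf{The fixed tilt only gives the convex minorant.} You say ``conditionally on the walk'', but the bound you display is the annealed generating function $\e[e^{\theta T_n}]=\E_{\Q_0}[\prod_y(1-p+pe^{\theta\ell_n(y)})]$ with a deterministic $\theta=\beta n^{-2/(d+2)}$, optimized over $\beta$ only at the end. By Varadhan's lemma, this can never give more than the Legendre transform of the limiting annealed log-Laplace functional, i.e.\ the convex minorant of the true rate
\[
J(\varepsilon)=\inf_{\|\phi\|_2=1}\Bigl\{\tfrac{1}{2d}\|\nabla\phi\|_2^2+\sup_{\beta\ge0}\Bigl[\beta\varepsilon-\int\widetilde\Lambda(\beta\phi^2)\Bigr]\Bigr\},\qquad \widetilde\Lambda(s):=\log(1-p+pe^{s})-ps .
\]
$J$ is not convex in general. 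In $d=3$, $\widetilde\Lambda(s)\le s^2/8$ together with Gagliardo--Nirenberg, $\|\phi\|_4^4\le C\|\nabla\phi\|_2^3$ for $\|\phi\|_2=1$, gives $J(\varepsilon)\ge c\,\varepsilon^{4/5}$. On the other hand, $J$ is bounded on $(0,1-p)$ by your own all-open strategy, so its convex minorant is $O(\varepsilon)$ near $0$. Your upper and lower bounds therefore cannot meet.

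The tilt has to be allowed to depend on one source of randomness. One option is to bound $\P_p(\sum_y\omega(y)\ell_n(y)>(p+\varepsilon)n)\le\exp(-\sup_{\theta\ge0}[\theta\varepsilon n-\sum_y\widetilde\Lambda(\theta\ell_n(y))])$ path by path, and only then average over the walk. The other is to condition on the coarse-grained scenery profile and apply Chebyshev to the walk with a profile-dependent $\theta$. There duality is harmless, because the Donsker--Varadhan rate is convex in $\phi^2$ and the constraint is linear in it. Note also that the scenery cost is $\sup_\beta[\beta\varepsilon-\int\widetilde\Lambda(\beta\phi^2)]$, a functional of the whole profile, not of its support.

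\textbf{Varadhan's lemma does not apply in the weak topology.} Even with the right functional, ``Donsker--Varadhan plus a Varadhan-type bound'' does not go through as stated. The quantity $\sum_y\widetilde\Lambda(\theta\ell_n(y))$ depends on the local times site by site. In rescaled form, $\int\widetilde\Lambda(\beta L)$ is, by convexity, only lower semicontinuous in the weak topology of the occupation-measure LDP, which is the wrong direction for an upper bound. The missing ingredient is one of the following:
\begin{itemize}
\item an exponential approximation, at speed $n^{d/(d+2)}$, of the rescaled local times by their mollifications in $L^1$ (enough since $0\le\widetilde\Lambda'\le 1-p$);
\item a Feynman--Kac/principal-eigenvalue formulation, in which the Dirichlet form supplies the regularity.
\end{itemize}
Your torus periodization is legitimate for the upper bound, but you should say why. $\widetilde\Lambda$ is convex with $\widetilde\Lambda(0)=0$, hence superadditive, so folding can only increase $\sum_y\widetilde\Lambda(\theta\ell_n(y))$. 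This also disposes of large-range walks without a separate argument.

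\textbf{The Fekete step does not work.} The speed is sublinear and the optimal confinement radius grows with $n$, so concatenating excursions in a common ball yields no additive inequality. Existence of the limit has to come from matching the variational formula from both sides. That requires a sharp lower-bound strategy: the walk tilted to an arbitrary profile $\phi^2$ on scale $n^{1/(d+2)}$, and the scenery tilted to local density $\Lambda'(\beta\phi^2)$ with $\Lambda(s)=\log(1-p+pe^s)$. The all-open ball alone is not enough. Since the limit is unique, a correct proof would necessarily reproduce the constants of \cite{GKS}, contrary to your opening caveat.

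For what the paper actually uses, namely a positive constant in the upper bound, your deterministic-tilt route would suffice once the regularity issue is settled.
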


Note that for any $x\in\Z^d$, $\Q_x=\Q_0$. From this annealed large deviation theorem, we thus deduce the quenched strong law of large number for $T_n$ under the quenched law $\mathbb{Q}^\omega_x := \mathbb{Q}_x(\cdot\vert \omega)$.

\begin{lem}\label{Q-SLLN}
For any $x\in\Z^d$, for $\P_p$-a.s. $\omega$, we have
\[
\mathbb{Q}_x^\omega\left( \lim_{n\to\infty} \frac{T_n}{n} = p\right) =1.
\]
\end{lem}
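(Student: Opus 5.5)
The plan is to pass from the annealed large deviation bounds of Theorem \ref{LDP-RWRS} to a quenched almost sure statement using Markov's inequality and Borel--Cantelli. Fix $x\in\Z^d$ and $\varepsilon\in(0,p\wedge(1-p))$, and set
\[
B_n:=\left\{\left|\frac{T_n}{n}-p\right|>\varepsilon\right\}.
\]
By Theorem \ref{LDP-RWRS}, for all $n$ large enough,
\[
\mathbb{Q}_x(B_n)\le 2\exp\left(-c(\varepsilon) n^{\frac{d}{d+2}}\right), \qquad c(\varepsilon):=\tfrac12\min\{c_1(\varepsilon),c_2(\varepsilon)\}>0.
\]
Since $d/(d+2)>0$, this bound is summable in $n$. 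Hence
\[
\sum_n \mathbb{Q}_x(B_n)=\E_p\Big[\sum_n \mathbb{Q}_x^\omega(B_n)\Big]<\infty.
\]
So for $\P_p$-a.s. $\omega$ we have $\sum_n \mathbb{Q}^\omega_x(B_n)<\infty$.

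Fix such an $\omega$. The first Borel--Cantelli lemma under $\mathbb{Q}^\omega_x$ gives that $B_n$ occurs only finitely often $\mathbb{Q}^\omega_x$-a.s. Therefore $\limsup_n |T_n/n-p|\le\varepsilon$ holds $\mathbb{Q}^\omega_x$-a.s.

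Next take $\varepsilon=\varepsilon_k:=(p\wedge(1-p))/(k+1)$ for $k\ge1$. Intersecting the countably many full $\P_p$-measure sets of environments gives a single full-measure set of $\omega$. On this set, $\limsup_n|T_n/n-p|\le \varepsilon_k$ for every $k$, $\mathbb{Q}^\omega_x$-a.s., and hence $T_n/n\to p$ $\mathbb{Q}^\omega_x$-a.s. Finally, $x$ ranges over the countable set $\Z^d$, so the full-measure set of $\omega$ may even be taken uniform in $x$, although the statement only requires it for each fixed $x$.

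There is no real obstacle here. The only point needing care is that $\mathbb{Q}_x^\omega$ is a regular conditional law given $\omega$. It is the product-measure section, namely the law $\Q_x$ of the walk with the scenery $\omega$ fixed, so the Fubini step $\mathbb{Q}_x(B_n)=\E_p[\mathbb{Q}^\omega_x(B_n)]$ is justified.
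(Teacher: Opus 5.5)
Your proposal is correct and follows essentially the same route as the paper: the annealed large deviation bound of Theorem \ref{LDP-RWRS} gives summable quenched probabilities for a.e. environment, then Borel--Cantelli under $\mathbb{Q}_x^\omega$, with your countable intersection over $\varepsilon_k$ making explicit a step the paper leaves implicit. The only difference is how summability is obtained. The paper uses Markov's inequality with threshold $\delta_n=e^{-c_0(\varepsilon)n^{d/(d+2)}/2}$ and Borel--Cantelli in the environment, which yields the explicit eventual quenched bound \eqref{RWRS-LDP} reused later. Your Tonelli step gives only $\sum_n\mathbb{Q}^\omega_x(B_n)<\infty$, which is all this lemma needs.
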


\begin{proof}
For any  $\varepsilon\in (0, p\wedge (1-p))$ and any positive sequence $(\delta_n)_{n\ge1}$, by Markov inequality and \eqref{LDP-RWRS}, one sees that 
\begin{align*}
\P_p\left( \mathbb{Q}_x^\omega(|T_n - pn |\ge \varepsilon n) \ge \delta_n \right) \le & \frac{1}{\delta_n} \mathbb{Q}_x( |T_n - pn | \ge \varepsilon n) \\
\le & \frac{c_3(\varepsilon)}{\delta_n} e^{- c_0(\varepsilon) n^{\frac{d}{d+2}}},
\end{align*}
with $c_3(\varepsilon)>0$ and $c_0(\varepsilon) = \frac12[ c_1(\varepsilon)\wedge c_2(\varepsilon)]>0$. Taking $\delta_n = e^{- c_0(\varepsilon) n^{\frac{d}{d+2}}/2}$ yields that 
\[
\sum_{n\ge1} \P_p\left( \mathbb{Q}_x^\omega(|T_n - pn |\ge \varepsilon n) \ge \delta_n \right) <\infty.
\]
Borel-Cantelli Lemma shows that
\[
\P_p(\mathbb{Q}_x^\omega(|T_n - pn |\ge \varepsilon n) \ge \delta_n \textrm{ i. o. })=0.
\]
In other words, $\P_p$-a.s.,
\[
\mathbb{Q}_x^\omega(|T_n - pn |\ge \varepsilon n) \le c_0^\omega(\varepsilon) \delta_n, \forall n\ge1.
\]
Again, as $\mathbb{Q}_x^\omega(|T_n - pn |\ge \varepsilon n) <\infty$, $\P_p$-a.s., we conclude Lemma \ref{Q-SLLN}.
\end{proof}
From the proof above, one sees that for $\varepsilon\in(0,1)$ sufficiently small, there exists an environment r.v. $c_0^\omega(\varepsilon)\in(0,\infty)$ such that $\P_p$-a.s. 
\begin{equation}\label{RWRS-LDP}
\max_{|x|_1\le n^2}\mathbb{Q}_x^\omega\left(| T_n -pn | > \varepsilon  n \right) \le c_0^\omega(\varepsilon) e^{- \frac14 c_0(\varepsilon) n^{\frac{d}{d+2}}}, \forall n\ge 1.
\end{equation}
\subsection{Lyons' change of measure and spinal decomposition}

In the section, we always impose the Assumption \ref{A1}. Then the system is in critical regime. Recall that $Z_n$ is the number of alive particles at time $n$. Then $\e^\omega(Z_n)=1$. Let $\mathcal{F}_n$ denote the natural filtration of the branching random walk, defined by
\[
\mathcal{F}_n:=\sigma( (u,S_u), | u | \le n),
\]
with $|u|$ denoting the generation of a particle $u$.

It is immediate that $\{Z_n\}_{n\ge0}$ is a martingale with respect to $\{\mathcal{F}_n\}_{n\ge0}$ under the quenched law $\p^\omega_x$, for any $x\in\Z^d$. So, given the environment $\omega$, for any $x\in\Z^d$, we introduce the following probability measure:
\begin{equation}
\frac{d\overline{\p}_x^\omega}{d \p^\omega_x}\vert_{\mathcal{F}_n} = Z_n, \forall n\in\N.
\end{equation}

We reconstruct the branching random walk under $\overline{\mathbb{P}}^\omega_x$ as follows. Given the environment $\omega$, we start with an ancestor particle at position $x$, which we designate as the distinguished particle $\zeta_0$ at generation $0$ and denote its position by $S_{\zeta_0} = x$. For each generation $n \ge 1$, the evolution proceeds as:
\begin{itemize}
    \item The distinguished particle $\zeta_{n-1}$, located at either an open or a closed site (according to $\omega$), gives birth to a random number $\xi_n$ of children. The law of $\xi_n$ is the size-biased distribution $\hat{\mu}_\circ = \sum_k k \mu_\circ(k) \delta_k$ if the site is open, and $\hat{\mu}_\bullet = \sum_k k \mu_\bullet(k) \delta_k$ if it is closed.
    \item Independently, every other particle of the $(n-1)$-th generation (if any) produces offspring according to the original distribution $\mu_\circ$ or $\mu_\bullet$, depending on whether its own site is open or closed.
    \item Among the children of the distinguished particle, one is chosen uniformly at random to become the new distinguished particle $\zeta_n$. All remaining children, together with the offspring of all non-distinguished particles, form the ordinary particles of the $n$-th generation.
    \item Immediately after birth, every newborn particle independently moves from its parent's location according to the jump distribution $\nu$.
\end{itemize}
By convention, the newborn particles are labeled in lexicographic order at each branching event. This procedure defines the quenched law, denoted by $\widehat{\p}^\omega_x$, of the branching random walk together with a distinguished line of descent $\{(u,S_u)_{u\in\T}; (\zeta_n, S_{\zeta_n})_{n\ge0}\}$. We call the distinguished line of descent $(\zeta_n)_{n\ge0}$ the spine. 

It is known that the marginal law of $\{(u,S_u)_{u\in\T}\}$ under $\widehat{\p}^\omega_x$ is exactly $\overline{\p}^\omega_x$. We also have the following proposition, whose proof can be referred to \cite{Shi}.

\begin{prop}\label{prop: spine}
For any $n\in\N$ and $x\in\Z^d$, the following assertions hold.
\begin{enumerate}
\item[\textbf{(1)}] 
\begin{equation}\label{prop-spine-1}
\widehat{\p}^\omega_x \vert_{\mathcal{F}_n} = \overline{\p}^\omega_x \vert_{\mathcal{F}_n} = Z_n \cdot \p^\omega_x \vert_{\mathcal{F}_n}.
\end{equation}
\item[\textbf{(2)}] For any $u\in\T$,
\[
\widehat{\p}^\omega_x( \zeta_n = u \vert \mathcal{F}_n ) = \frac{\ind{|u|=n}}{Z_n}.
\]
\item[\textbf{(3)}] Under $\widehat{\p}^\omega_x$, $(S_{\zeta_n})_{n\ge0}$ has the same distribution as $(S_{n})_{n\ge0}$ under $\Q_x$.
\end{enumerate}

\end{prop}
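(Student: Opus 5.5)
We will prove the three assertions in order, with the environment $\omega$ fixed throughout. Assertions (1) and (2) follow from standard manipulations of the Radon--Nikodym density. Assertion (3) comes from an explicit computation of the law of the spine.

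For (1), the first equality is the known fact, recalled above, that the marginal of $\widehat{\p}^\omega_x$ on the unlabelled process is $\overline{\p}^\omega_x$. To make it self-contained, we compute $\widehat{\p}^\omega_x(\mathcal{T}_n = t,\ \zeta_n = u,\ \text{positions})$ for a fixed finite tree $t$ of height $n$, a vertex $u$ with $|u|=n$, and fixed positions.
\begin{itemize}
\item Along the spine ancestors $u_0,\dots,u_{n-1}$ of $u$, each factor $k\,\mu_{\omega(S_{u_j})}(k)$ coming from the size-biased law is multiplied by the probability $1/k$ of choosing $u_{j+1}$. The product is exactly $\mu_{\omega(S_{u_j})}(k)$.
\item Every other vertex contributes its original offspring probability.
\item Every particle, spine or not, contributes its motion factor under $\nu$.
\end{itemize}
Hence this joint probability equals $\p^\omega_x(\mathcal{T}_n=t,\text{positions})$. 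Summing over the $Z_n(t)$ choices of $u$ gives $\widehat{\p}^\omega_x|_{\mathcal{F}_n}=Z_n\,\p^\omega_x|_{\mathcal{F}_n}$. Criticality ($m_\circ=m_\bullet=1$) guarantees $\e^\omega_x Z_n=1$, so this is a probability measure consistent in $n$.

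For (2), the same computation shows that the joint weight of $(\mathcal{F}_n\text{-configuration},\ \zeta_n=u)$ does not depend on which $u$ with $|u|=n$ is chosen. Therefore, conditionally on $\mathcal{F}_n$, $\zeta_n$ is uniform on generation $n$, i.e. it equals $u$ with probability $\ind{|u|=n}/Z_n$.

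For (3), by construction $S_{\zeta_n}-S_{\zeta_{n-1}}$ is the $\nu$-jump of the chosen child $\zeta_n$. This jump is independent of the past and of $\xi_n$, and the choice of $\zeta_n$ among the children is independent of all motions. Hence the increments are i.i.d. with law $\nu$, starting from $x$, so $(S_{\zeta_n})_{n\ge0}$ has the law $\Q_x$. This step uses that the motion law is homogeneous and does not depend on $\omega$; the environment only affects the offspring numbers, which do not change the spine's position.

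The only delicate point is the bookkeeping in (1): the lexicographic labelling must be handled so that summing over the choice of spine child matches the tree probabilities. This is routine, and we follow \cite{Shi}.
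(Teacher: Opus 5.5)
Your proposal is correct and matches the argument the paper relies on: the paper gives no proof of its own and refers to \cite{Shi}, where the spinal decomposition is obtained from the same cancellation you use, namely the size-biased factor $k\mu(k)$ against the uniform selection probability $1/k$ along the spine, which gives $\widehat{\p}^\omega_x(\text{configuration},\ \zeta_n=u)=\p^\omega_x(\text{configuration})$ for every generation-$n$ vertex $u$ and hence (1) and (2) together. Your direct argument for (3) is also sound: the chosen child's $\nu$-jump is independent of the past, of $\xi_n$ and of the selection, so the spine increments are i.i.d.\ with law $\nu$.
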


Under $\widehat{\p}^\omega_x$, we decompose the genealogical tree $\T$ relative to the spine $(\zeta_n)_{n\ge0}$. At each generation $j\ge1$, there are $\xi_j-1$ siblings of the spine particle $\zeta_j$, which, according to the lexicographic order, can be naturally divided into those lying to the left and to the right of the spine. Let $\mathrm{L}_j$ (resp. $\mathrm{R}_j$) be the set of the siblings to the left (resp. right) of $\zeta_j$. Set $l_j = \#\mathrm{L}_j$ and $r_j=\#\mathrm{R}_j$. For $n\ge j$, let $L_{n,j}$ (resp. $R_{n,j}$) be the number of descendants at generation $n$ of the left (resp. right) siblings of $\zeta_j$. Set $L_n:=  \sum_{j=1}^n L_{n,j}$ and $R_n:= 1 + \sum_{j=1}^n R_{n,j}$. We hence observe that
\begin{equation}\label{spine-decomp}
\xi_n= l_n +1+r_n, \textrm{ and } Z_n = \sum_{j=1}^n L_{n,j} + 1 + \sum_{j=1}^n R_{n,j}= L_n +R_n.
\end{equation}
 Note that $L_n$ counts the number of particles at $n$-th generation which are on the left of $\zeta_n$ in the lexicographic order. 

From the above construction, it follows immediately that for any $j\ge 1$, $k,m\in \N$,
\begin{equation}\label{lawlr}
\begin{cases} 
\widehat{\p}^\omega_x \left( l_j = k, r_j = m \vert \omega(S_{\zeta_{j-1}} )=1 \right) =& \mu_\circ(k+m+1) ,\\
\widehat{\p}^\omega_x \left( l_j = k, r_j = m \vert \omega(S_{\zeta_{j-1}} )=0 \right) =& \mu_\bullet(k+m+1) .
\end{cases}
\end{equation}
Consequently,
\begin{equation}\label{meanlr}
\begin{cases}
\widehat{\e}^\omega_x[ l_j \vert  \omega(S_{\zeta_{j-1}} )=1 ] =\widehat{\e}^\omega_x[ r_j \vert  \omega(S_{\zeta_{j-1}} )=1 ] =& \frac12\sigma^2_\circ, \\
\widehat{\e}^\omega_x[ l_j \vert  \omega(S_{\zeta_{j-1}} )=0 ] = \widehat{\e}^\omega_x[ r_j \vert  \omega(S_{\zeta_{j-1}} )=0 ] =& \frac12\sigma^2_\bullet.
 \end{cases}
\end{equation}
We write $\widehat{\p}^\omega_{(x_j)_{j\le n}}(\cdot)$ for $\widehat{\p}^\omega[\cdot\vert (S_{\zeta_j}) =(x_j)_{j\le n} ]$ where $(x_0,x_1,\cdots, x_n)$ is an admissible nearest-neighbor path in $\Z^d$ and use $\widehat{\e}^\omega_{(x_j)_{j\le n}}$ for the corresponding expectation. We also have the following lemma on the law of $(l_j,r_j)$.
\begin{lem}
There exist constants $C_1,C_2\in(0,\infty)$ such that for any $\delta\in(0,1]$ and any admissible path $(x_j)_{j\le n}\in(\Z^d)^n$, one has for any $1\le j\le n$,
\begin{equation}\label{bdlr}
C_1\widehat{\e}^\omega_{(x_j)_{j\le n}}[ (1-\delta)^{l_j}] \widehat{\e}^\omega_{(x_j)_{j\le n}}[ r_j ] \le \widehat{\e}^\omega_{(x_j)_{j\le n}}[ (1-\delta)^{l_j} r_j ] \le C_2 \widehat{\e}^\omega_{(x_j)_{j\le n}}[ (1-\delta)^{l_j}  ] \widehat{\e}^\omega_{(x_j)_{j\le n}}[ r_j  ] .
\end{equation}
Further, as $\delta\downarrow0+$, uniformly in $j,n$ and $(x_j)_{j\le n}\in(\Z^d)^n$,
\begin{equation}\label{asymplr}
\widehat{\e}^\omega_{(x_j)_{j\le n}} [(1-\delta)^{l_j} r_j ] =  \widehat{\e}_{(x_j)_{j\le n}}^\omega \left[(1-\delta)^{l_j} \right] \widehat{\e}_{(x_j)_{j\le n}}^\omega \left[ r_j \right] + o_\delta(1).
\end{equation}
\end{lem}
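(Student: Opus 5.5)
Conditionally on the spine path $(x_j)_{j\le n}$, the pair $(l_j,r_j)$ depends only on whether $x_{j-1}$ is open or closed. By \eqref{lawlr} it has joint law $\mu(k+m+1)$, where $\mu\in\{\mu_\circ,\mu_\bullet\}$. So the statement reduces to two fixed laws, and every constant may be taken as the minimum or maximum over the two cases. Write $\xi=l+1+r$ with $\widehat{\p}(\xi=k)=k\mu(k)$. Given $\xi=k$, the spine position is uniform among $1,\dots,k$, so $l$ is uniform on $\{0,\dots,k-1\}$ and $r=k-1-l$. All three quantities are then explicit sums:
\[
\widehat{\e}[r]=\tfrac12\sigma^2,\qquad \widehat{\e}[(1-\delta)^{l}]=\sum_k\mu(k)\sum_{i=0}^{k-1}(1-\delta)^i,\qquad \widehat{\e}[(1-\delta)^l r]=\sum_k\mu(k)\sum_{i=0}^{k-1}(1-\delta)^i(k-1-i).
\]

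\textbf{Upper bound.} Bound $(k-1-i)\le k-1$ to get
\[
\widehat{\e}[(1-\delta)^l r]\le \sum_k\mu(k)(k-1)\sum_{i<k}(1-\delta)^i .
\]
I would split according to $k\le K$ or $k>K$, with $K$ a fixed level. The geometric sum satisfies $\sum_{i<k}(1-\delta)^i\le\min(k,1/\delta)$, and $\widehat{\e}[(1-\delta)^l]\ge \widehat{\p}(l=0)=\sum_{k\ge1}\mu(k)=1-\mu(0)>0$. Hence the right-hand side is at most $\sum_k k^2\mu(k)=\sigma^2+1$. Whenever $\sigma^2>0$, this gives
\[
\widehat{\e}[(1-\delta)^l r]\le C\,\widehat{\e}[(1-\delta)^l]\,\widehat{\e}[r].
\]
If $\sigma^2=0$, then $\mu=\delta_1$, so $r\equiv0$ and both sides vanish.

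\textbf{Lower bound.} Keep only the terms $i\le (k-1)/2$, for which $k-1-i\ge (k-1)/2$. Using $(1-\delta)^{i}\ge(1-\delta)^{2i}$, I expect the kept terms to contribute at least
\[
\tfrac12\sum_k\mu(k)(k-1)\sum_{i\le (k-1)/2}(1-\delta)^i\;\ge\; c\sum_k\mu(k)(k-1)\min(k,1/\delta).
\]
It remains to compare this with $\widehat{\e}[(1-\delta)^l]\,\widehat{\e}[r]=\tfrac12\sigma^2\sum_k\mu(k)\min(k,1/\delta)$, up to constants. This comparison is the \textbf{main obstacle}. Summands with large $k$ carry weight $k-1$ on the left but only the constant $\sigma^2$ on the right, so the left side only becomes larger there; the problem is the small $k$. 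In particular $k=1$ contributes $0$ on the left but $\mu(1)$ on the right. The fix is to use non-degeneracy together with criticality: $\mu\neq\delta_1$ and mean $1$ force $\mu(k_0)>0$ for some $k_0\ge2$. The left side is then at least $c\,\mu(k_0)$ uniformly in $\delta\in(0,1]$, since $\min(k_0,1/\delta)\ge1$. Meanwhile $\sum_k\mu(k)\min(k,1/\delta)\le\sum_k k\mu(k)=1$, which is bounded. This yields $C_1>0$ for each non-degenerate law. If one of the two laws is $\delta_1$, both sides are $0$ and the inequality holds trivially.

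\textbf{Asymptotics.} For \eqref{asymplr}, let $\delta\to0$. By dominated convergence, with dominating functions $k$ and $k^2$ (integrable against $\mu$ by the finite-variance assumption), $\widehat{\e}[(1-\delta)^l r]\to\widehat{\e}[r]$ and $\widehat{\e}[(1-\delta)^l]\to1$. Hence the difference tends to $0$. Because only two laws are involved, this convergence is uniform in $j$, $n$ and the path.
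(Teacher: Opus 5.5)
Your proof is correct and follows essentially the paper's route. You reduce to the two laws and get the upper bound from $\widehat{\e}[(1-\delta)^{l_j}]\ge\widehat{\p}(l_j=0)=1-\mu(0)$ together with $\widehat{\e}[(1-\delta)^{l_j}r_j]$ being at most a multiple of $\widehat{\e}[r_j]$. You get the lower bound by bounding the left side below by a $\delta$-free positive constant while the right side is at most $\widehat{\e}[r_j]=\sigma^2/2$, and you conclude with dominated convergence. The difference is cosmetic: the paper uses the slice $\{l_j=0\}$, which gives exactly $\widehat{\e}[\ind{l_j=0}r_j]=\sum_{k\ge1}(k-1)\mu(k)=\mu(0)>0$, whereas you use a single atom $k_0\ge2$. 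Once you aim only for a constant lower bound, your ``main obstacle'', the unproved comparison with $\min(k,1/\delta)$ and the split at level $K$ are all unnecessary.
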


\begin{proof}
In fact, if $\mu_\bullet = \delta_1$ and $\omega(x_{j-1})=0$, $(l_j,r_j)$ is trivially zero vector by \eqref{lawlr}. So, \eqref{bdlr} always hold. Otherwise, $(l_j,r_j)$ is not trivially zero vector and we deduce from \eqref{lawlr} that
\begin{align*}
\widehat{\e}_{(x_j)_{j\le n}}^\omega \left[(1-\delta)^{l_j} r_j \right] \ge & \widehat{\e}_{(x_j)_{j\le n}}^\omega \left[\ind{l_j=0} r_j \right] = 
\begin{cases}
\sum_{m\ge0} m \mu_\circ(m+1) = \mu_\circ(0), & \textrm{ if } \omega(x_{j-1} ) =1;\\
\sum_{m\ge0} m \mu_\bullet(m+1) = \mu_\bullet(0), & \textrm{ if } \omega(x_{j-1} ) =0.
\end{cases}
\end{align*}
On the other hand,
\begin{align*}
\widehat{\e}_{(x_j)_{j\le n}}^\omega \left[(1-\delta)^{l_j} \right] \widehat{\e}_{(x_j)_{j\le n}}^\omega \left[ r_j \right] \le \widehat{\e}_{(x_j)_{j\le n}}^\omega \left[ r_j \right] =
\begin{cases}
\frac12\sigma^2_\circ, & \textrm{ if } \omega(x_{j-1})=1;\\
\frac12\sigma^2_\bullet, & \textrm{ if } \omega(x_{j-1})=0.
\end{cases}
\end{align*}
So, we can always find a constant $C_1>0$ such that $\mu_\circ(0)\ge \frac{C_1}{2}\sigma^2_\circ$ and $\mu_\bullet(0)\ge \frac{C_1}{2}\sigma^2_\bullet$. This leads to the lower bound in \eqref{bdlr}. For the upper bound, note that
\begin{align*}
\widehat{\e}_{(x_j)_{j\le n}}^\omega \left[(1-\delta)^{l_j} r_j \right] \le & \widehat{\e}_{(x_j)_{j\le n}}^\omega \left[ r_j \right] ,
\end{align*}
and that
\begin{align*}
\widehat{\e}_{(x_j)_{j\le n}}^\omega \left[(1-\delta)^{l_j} \right] \ge \widehat{\e}_{(x_j)_{j\le n}}^\omega \left[ \ind{l_j=0} \right] =
\begin{cases}
\sum_{m\ge 0}\mu_\circ(m+1)= 1-\mu_\circ(0), & \textrm{ if } \omega(x_{j-1})=1;\\
\sum_{m\ge0}\mu_\bullet(m+1) = 1- \mu_\bullet(0), & \textrm{ if } \omega(x_{j-1})=0.
\end{cases} 
\end{align*}
Therefore, we can take $C_2= \frac{1}{1-\mu_\circ(0)\vee \mu_\bullet(0)}>0$ to ensure the upper bound in \eqref{bdlr}. 

On the other hand, dominated convergence theorem shows that
\begin{align*}
\widehat{\e}_{(x_j)_{j\le n}}^\omega \left[(1-\delta)^{l_j} r_j \right] = & \widehat{\e}_{(x_j)_{j\le n}}^\omega \left[ r_j \right] + o_\delta(1),\\
 \widehat{\e}_{(x_j)_{j\le n}}^\omega \left[(1-\delta)^{l_j} \right] \widehat{\e}_{(x_j)_{j\le n}}^\omega \left[ r_j \right] = & \widehat{\e}_{(x_j)_{j\le n}}^\omega \left[ r_j \right] + o_\delta(1).
\end{align*}
This implies then \eqref{asymplr} as $o_\delta(1)$ only depends on the two laws $\mu_\circ$ and $\mu_\bullet$.
\end{proof}
In addition, for any particle $u\in\T$ with $|u|\le n$, let $Z_n^{(u)}$ be the number of its descendants at $n$-th generation. Observe that 
\begin{equation}\label{subtree}
L_{n,j}=\sum_{u\in\mathrm{L}_j} Z^{(u)}_n, \quad R_{n,j} = \sum_{u\in\mathrm{R}_j} Z^{(u)}_n,
\end{equation}
where conditioned on $\mathcal{G}_\infty:=\sigma\{(\zeta_0, S_{\zeta_0}), (\zeta_j, S_{\zeta_j}, (u, S_u)_{u\in \mathrm{L}_j\cup\mathrm{R}_j})_{j\ge1}\}$, for $u\in \mathrm{L}_j\cup\mathrm{R}_j$, $Z_n^{(u)}$ are independent under $\widehat{\p}^\omega_x$, and distributed as $Z_{n-j}$ under $\p^\omega_{S_u}$. 

Next, let us take a look at the spine. Once the trajectory $(S_{\zeta_j})_{j\ge0}$ is given, along the spine, at the generations belonging to $I_\circ(n):=\{j\in\N\cap[1,n]: \omega(S_{\zeta_{j-1}}) =1 \}$, branching happens with $\xi_j$ distributed as $\widehat{\mu}_\circ$, whereas at the generations belonging to $I_\bullet(n):=\{j\in\N\cap[1,n]: \omega(S_{\zeta_{j-1}}) =0 \}$, branching happens with $\xi_j$ distributed as $\widehat{\mu}_\bullet$. This leads to 
\begin{equation}
\frac1n \sum_{1\le j\le n} r_j = \frac{\sum_{j\in I_\circ(n)} r_j }{\# I_\circ(n)}\ind{I_\circ(n)\neq \emptyset} \times \frac{\sum_{j=1}^n \omega(S_{\zeta_{j-1}})}{n} + \frac{\sum_{j\in I_\bullet(n)} r_j }{\#I_\bullet(n)}\ind{I_\bullet(n) \neq \emptyset} \times \frac{n - \sum_{j=1}^n \omega(S_{\zeta_{j-1}})}{n},
\end{equation}
with the convention that $\frac{\sum_{j\in I_\circ(n)} r_j }{\# I_\circ(n)}\ind{I_\circ(n)\neq \emptyset} =0$ if $I_\circ(n)=\emptyset$.
In view of the quenched strong law of large number in Lemma \ref{Q-SLLN}, for $\P_p$-a.s, $\omega$, 
\begin{equation}\label{asym-spine}
\frac{\#I_\circ(n)}{n} =  \frac{\sum_{j=1}^n \omega(S_{\zeta_{j-1}})}{n} \xrightarrow[n\to\infty]{ \widehat{\p}^\omega_x -a.s. } p,\quad \frac{\#I_\bullet(n)}{n} =  \frac{n-\sum_{j=1}^n \omega(S_{\zeta_{j-1}})}{n} \xrightarrow[n\to\infty]{ \widehat{\p}^\omega_x -a.s. } 1-p.
\end{equation}
By \eqref{lawlr} and \eqref{meanlr}, for $j\in I_\circ(n)$, $r_j$ are i.i.d. with mean $\frac12\sigma^2_\circ$ whereas for $j\in I_\bullet(n)$, $r_j$ are i.i.d. with mean $\frac12\sigma^2_\bullet$. The classical strong law of large number shows that for $\P_p$-a.s. $\omega$,  $ \frac{\sum_{j\in I_\circ(n)} r_j }{\# I_\circ(n)} $ converges $\widehat{\p}^\omega_x$-a.s. to $\frac12\sigma^2_\circ$ and $ \frac{\sum_{j\in I_\bullet(n)} r_j }{\# I_\bullet(n)} $ converges $\widehat{\p}^\omega_x$-a.s. to $\frac12\sigma^2_\bullet$.

As a consequence, we obtain the following convergence result.

\begin{lem}\label{LLNlr}
For any $x\in\Z^d$ and for $\P_p$-a.s. environment $\omega$, 
\begin{equation*}
\frac1n \sum_{1\le j\le n} r_j \xrightarrow[n\to\infty]{ \widehat{\p}^\omega_x-a.s. } \frac{p\sigma^2_\circ + (1-p)\sigma^2_\bullet}{2}.
\end{equation*}
The similar result holds for $(l_j)_{j\ge1}$. 
\end{lem}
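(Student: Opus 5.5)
The argument follows the decomposition displayed just before the statement: write $\frac1n\sum_{j\le n} r_j$ as the open-site part weighted by $\#I_\circ(n)/n$ plus the closed-site part weighted by $\#I_\bullet(n)/n$. The key point is to condition on the spine trajectory. Under $\widehat{\p}^\omega_x$, given $(S_{\zeta_j})_{j\ge0}$, the pairs $(l_j,r_j)_{j\ge1}$ are independent. By \eqref{lawlr}, the law of $(l_j,r_j)$ depends on $j$ only through $\omega(S_{\zeta_{j-1}})$. So, conditionally on the spine, the subsequences $(r_j)_{j\in I_\circ(\infty)}$ and $(r_j)_{j\in I_\bullet(\infty)}$ are i.i.d. with means $\frac12\sigma^2_\circ$ and $\frac12\sigma^2_\bullet$; these means are finite by the finite-variance part of Assumption \ref{A1} and \eqref{meanlr}.

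\textbf{Step 1.} Since $(S_{\zeta_j})$ under $\widehat{\p}^\omega_x$ is simple random walk under $\Q_x$ (Proposition \ref{prop: spine}(3)), Lemma \ref{Q-SLLN} gives \eqref{asym-spine} for $\P_p$-a.s.\ $\omega$. In particular, $\#I_\circ(n)\to\infty$ and $\#I_\bullet(n)\to\infty$ almost surely, because $p\in(0,1)$.

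\textbf{Step 2.} Enumerate $I_\circ(\infty)=\{j_1<j_2<\cdots\}$. Conditionally on the spine, $(r_{j_k})_{k\ge1}$ is an i.i.d. sequence with the fixed law $\sum_m \mu_\circ(k+m+1)$ summed over $k$, which does not depend on the spine. The classical strong law (only a first moment is needed) gives $\frac1K\sum_{k\le K} r_{j_k}\to\frac12\sigma^2_\circ$ almost surely under the conditional law, hence also under $\widehat{\p}^\omega_x$ after integrating over the spine. Because $\#I_\circ(n)\to\infty$, the ratio $\sum_{j\in I_\circ(n)}r_j/\#I_\circ(n)$ is a subsequence-in-time of these Cesàro averages (with repeated values), so it converges to the same limit. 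The closed sites are handled identically. If $\mu_\bullet=\delta_1$, then $r_j\equiv0$ on $I_\bullet$ and $\sigma^2_\bullet=0$, which is consistent.

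\textbf{Step 3.} Combine Steps 1 and 2 in the decomposition to get the limit $\frac{p\sigma^2_\circ+(1-p)\sigma^2_\bullet}{2}$. By \eqref{lawlr}, the law of $(l_j,r_j)$ is symmetric under swapping $l$ and $r$, so the same argument applies to $(l_j)$.

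The only delicate point is justifying the conditional i.i.d. structure, i.e. that the random index sets $I_\circ,I_\bullet$ are determined by the spine independently of the offspring numbers. This holds because in the construction of $\widehat{\p}^\omega_x$ the spine motion is independent of the $\xi_j$'s and of the uniform choice among children. Everything else is routine.
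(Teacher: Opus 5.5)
Your proposal is correct and follows the paper's argument. The paper likewise splits $\frac1n\sum_{j\le n} r_j$ according to $I_\circ(n)$ and $I_\bullet(n)$, obtains $\#I_\circ(n)/n\to p$ from Lemma~\ref{Q-SLLN} via \eqref{asym-spine}, and applies the classical strong law to the i.i.d.\ $r_j$ on each index set; you additionally make explicit the conditioning on the spine path and the passage from rank-indexed Ces\`aro averages to $\sum_{j\in I_\circ(n)} r_j/\#I_\circ(n)$, both of which the paper leaves implicit.
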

%Apparently, the same result holds for $(l_j)_{j\ge1}$. 

%Let us also give some large deviation estimate for $(r_j)_{j\ge1}$. Take $\delta\in(0,1)$ sufficiently small, we show that
%\begin{equation}
%\p^\omega_x( \sum_{j=1}^n r_j \le \delta n ) \le C_\omega e^{- C_\delta n^{\frac{d}{d+2}}}
%\end{equation}
%The proof of Lemma \ref{LLWlr} will be given in Section \ref{lems}.

At the end of this section, we introduce a useful identity. For any $n\ge1$, on $\{Z_n \ge 1\}$, let $u_n^{\textrm{leftmost}}$ denote the leftmost particle at the $n$-th generation with respect to the lexicographic order. Observe that
\[
\ind{Z_n \ge 1} = \sum_{|u|=n} \ind{u = u_n^{\textrm{leftmost}}}.
\]
Take arbitrary $A\in\mathcal{F}_n$, by Proposition \ref{prop: spine}-(1),
\begin{align*}
\e^\omega_x\left[ \ind{Z_n\ge 1} 1_A \right] =& \e^\omega_x\left[ \sum_{|u|=n} \ind{u = u_n^{\textrm{leftmost}}} 1_A \right]\\
= & \widehat{\e}^\omega_x\left[ \sum_{|u|=n} \frac{1}{Z_n} \ind{u = u_n^{\textrm{leftmost}}} 1_A \right]
\end{align*}
which by Proposition \ref{prop: spine}-(2) implies that
\begin{align*}
\e^\omega_x\left[ \ind{Z_n\ge 1} 1_A \right] =& \widehat{\e}^\omega_x\left[ \sum_{|u|=n} \widehat{\p}^\omega_x(\zeta_n = u \vert \mathcal{F}_n) \ind{u = u_n^{\textrm{leftmost}}} 1_A \right] = \widehat{\e}^\omega_x\left[  \ind{\zeta_n = u_n^{\textrm{leftmost}}} 1_A \right].
\end{align*}
Note that under $\widehat{\p}^\omega_x$, $\ind{\zeta_n = u_n^{\textrm{leftmost}}} = \ind{L_n=0}$. Therefore, for all $A\in\mathcal{F}_n$, 
\begin{equation}\label{keyeq}
\e^\omega_x\left[ \ind{Z_n\ge 1} 1_A \right] =  \widehat{\e}^\omega_x\left[  \ind{L_n=0} 1_A \right].
\end{equation}

\section{Survival probabilities: proof of Theorem \ref{thm: Sprobab} and Theorem \ref{thm: Sprobab+killing}}\label{survival}

We study the quenched survival probability of the branching random walk in IID environment in this section. By excluding the degenerate case with $\mu_\circ = \mu_\bullet=\delta_1$, there exist three regimes:
\begin{itemize}
\item \textbf{(Sub)-critical branchings}
\begin{equation}\label{sub}
\max\{ m_\circ, m_\bullet \} \le 1, \textrm{ and } \min\{\mu_\circ(1), \mu_\bullet(1)\}<1.
\end{equation}
\item \textbf{Supercritical branching with soft killing}
\begin{equation}\label{sup+softk}
\max\{m_\circ, m_\bullet \} >1, \textrm{ and } \max\{ \mu_\circ(0), \mu_\bullet(0) \} <1.
\end{equation}
\item \textbf{Supercritical branching with hard killing}
\begin{equation}\label{sup+hardk}
\max\{ m_\circ, m_\bullet \} >1, \textrm{ and } \max\{\mu_\circ(0), \mu_\bullet(0) \} =1.
\end{equation}
\end{itemize}

We will deal with these three regimes separately in three subsections.

\subsection{Proof of Theorem \ref{thm: Sprobab}-(1)}

Now we work under \eqref{sub} and prove that extinction happens a.s. in this regime. First, observe that as $m_\circ\vee m_\bullet \le 1$, for any environment $\omega$,
\begin{equation}
\e^\omega_x[ Z_{n+1} \vert \mathcal{F}_n ] =\sum_{|u|=n}[ \ind{\omega(S_u)=1}m_\circ + \ind{\omega(S_u)=0}m_\bullet ] \le Z_n, \forall n\ge0.
\end{equation}
This means that $(Z_n)_{n\ge0}$ is a non-negative supermartingale with respect to $\{\mathcal{F}_n\}_{n\ge0}$. Therefore, $Z_n$ converges a.s. to some non-negative limit $Z_\infty\in[0,\infty)$. Fatou's Lemma shows that
\[
\e^\omega_x[Z_\infty] \le \liminf_{n\to\infty} \e^\omega_x[Z_n] \le 1.
\]
Moreover, as all $Z_n$ are $\N$-valued r.v.'s, so it the a.s. limit $Z_\infty$. 

We need to exclude the situation that $Z_\infty \ge1$. 
Fix $k\ge1$ and let
\[
 E_{m,k}:=\{Z_n=k\text{ for every }n\ge m\}.
\]
First suppose that both offspring laws differ from $\delta_1$.  Conditional on
$\mathcal F_n$ and $Z_n=k$, let $r$ be the number of particles at open sites.
Then $Z_{n+1}$ is the sum of $r$ independent variables with law $\mu_\circ$
and $k-r$ independent variables with law $\mu_\bullet$.  For each
$r\in\{0,\ldots,k\}$ this sum is non-degenerate, and hence
\[
 q_k:=\max_{0\le r\le k}\underbrace{\mu_\circ\ast\mu_\circ\ast\cdots\mu_\circ}_{r\textrm{-times}}\ast\underbrace{\mu_\bullet\ast\cdots\ast\mu_\bullet}_{(k-r)\textrm{-times}}(k)<1.
\]
Consequently, conditional iteration gives
\[
 \p_x^\omega(Z_{m+1}=\cdots=Z_{m+N}=k\mid\mathcal F_m)
 \le q_k^N\quad\text{on }\{Z_m=k\},
\]
and therefore $\p_x^\omega(E_{m,k})=0$.

It remains to consider the case in which one law is $\delta_1$.  Call its
sites \emph{passive} and call the other sites \emph{active}; let $\mu$ be the
non-degenerate active law.  If, while $Z_n=k$, exactly $r\ge1$ particles are at
active sites, then
\[
 \p_x^\omega(Z_{n+1}=k\mid\mathcal F_n)
 =\underbrace{\mu\ast\cdots\ast\mu}_{r\textrm{-times}}(r)
 \le q_k':=\max_{1\le r\le k}
 \underbrace{\mu\ast\cdots\ast\mu}_{r\textrm{-times}}(r)<1.
\]
Thus $E_{m,k}$ has probability zero if active sites are occupied at infinitely
many generations.  On the complementary possibility, from some generation
onward all $k$ particles lie at passive sites.  Passive reproduction produces
exactly one child, so, until an active site is reached, the $k$ descendants
follow independent simple random walks.  Lemma~\ref{Q-SLLN}, applied on the
countable intersection over all starting sites, implies that for
$\P_p$-a.e. environment each such walk visits the active set with positive
limiting frequency (equal to $p$ or $1-p$).  Hence the probability that all
of them avoid the active set forever is zero.  This excludes $E_{m,k}$ in the
remaining case as well.

Finally, take the countable union over $m\ge1$ and $k\ge1$.  It follows that
$\p_x^\omega(Z_\infty\ge1)=0$, and hence $Z_\infty=0$ almost surely. 

\subsection{Proof of Theorem \ref{thm: Sprobab}-(2)}

In this part, we impose the assumption \ref{sup+softk}. 

Without loss of generality, we suppose that $m_\circ>1$. Consider the branching random walk with offspring law $\mu_\circ$, starting from $x\in\Z^d$ and killed upon leaving the $\ell^1$-ball $B_x(R):=\{z\in\Z^d: | z - x |_1 \le R \}$ with some fixed $R\ge 1$. Then it follows from Proposition \ref{brw+killing} that for $R\ge R_{m_\circ}$ sufficiently large, this system survives with positive probability. Take $R= R_{m_\circ}$ and denote the corresponding survival probability by $\mathrm{p}_{\mu_\circ}\in(0,1]$.

Going back to the proof of Theorem \ref{thm: Sprobab}-(2), we first claim that $\P_p$-a.s.,
\[
\exists x\in \Z^d \textrm{ such that } \omega\vert_{B_x(R)} \equiv 1.
\]
In fact, $\Z^d$ can be decomposed into a countable family of pairwisely disjoint hypercubes of side length $2R$. It is obvious that, with probability one, at least one such hypercube is entirely open. As every hypercube contains a $\ell^1$-ball $B_x(R)$, the claim is proved. 

Since $\mu_\circ(0) \vee \mu_\bullet(0)<1$, the killings are soft. Given the environment $\omega$ for which we have an open ball $B_{x(\omega)}(R)$, run a branching random walk starting from the origin and consider the following event $E$ associated with a finite trajectory from $0$ to $x(\omega)$: $\{x_0=0, x_1,\cdots, x_K= x(\omega)\}$:
\begin{itemize}
\item the ancestor $\ell_0$ has at least one child, and the leftmost one is denoted by $\ell_1$ which moves from $x_0$ to $x_1$;
\item at time $1\le n< K= K(\omega)$, $\ell_n$ has at least one child, and the leftmost one is denoted by $\ell_{n+1}$ which moves from $x_n$ to $x_{n+1}$. 
\end{itemize}
It is clear that once a particle arrives at $x(\omega)$, it could produce a branching random walk in $B_{x(\omega)}(R)$ which survives with positive probability. We then end up with
\[
\p^\omega_0(\S) \ge \p^\omega_0(E) \p^{(B_{x(\omega)}(R))}_{x(\omega)}(\S)\ge (\frac{1}{2d})^{K(\omega)} [(1-\mu_\circ(0))\wedge (1-\mu_\bullet(0))]^{K(\omega)}\mathrm{p}_{\mu_\circ} >0.
\]

\subsection{Proof of Theorem \ref{thm: Sprobab}-(3) and Theorem \ref{thm: Sprobab+killing}}

In this part, we deal with the case \ref{sup+hardk} where we have supercritical branching and hard killing. Without loss of generality, assume that $m_\circ>1$ and $\mu_\bullet(0) =1 $. 

Recall that for Bernoulli site percolation $\omega = (\omega(x), x\in \Z^d)$, the open cluster of the origin is $\mathcal{C}(0)$ so that for any $x\in \mathcal{C}(0)$, we can find a simple random walk trajectory consisting of open sites, from $0$ to $x$. For the branching random walk started from the origin, due to the hard killing at closed sites, one actually gets a branching random walk with offspring law $\mu_\circ$, getting killed upon leaving $\mathcal{C}(0)$. When $\#\mathcal{C}(0)<\infty$, $\mathcal{C}(0)$ is a finite and connected subset of $\Z^d$, the positivity of survival depends on whether $\rho_{\mathcal{C}(0)} m_\circ$ exceeds $1$, according to Proposition~\ref{brw+killing}.

In particular, if $\mathcal{C}(0) = \{0\}$, extinction happens always. It then follows that
\begin{align*}
\P_p(\omega: \p^\omega_0(\S)=0) \ge \P_p( \mathcal{C}(0) = \{0\} ) = p(1-p)^{2d}>0.
\end{align*}
On the other hand, it is known from Lemma \ref{spectralradius} that there exists $R_{m_\circ}\ge 1$ such that for any $R\ge R_{m_\circ}$, $\rho_{B_0(R)} m_\circ >1$. This shows that if $B_0(R)\subset \mathcal{C}(0)$ with $R$ sufficiently large, the system survives with probability larger than $\P_0^{B_0(R)}(\S)>0$. Consequently, 
\begin{align*}
\p_0(\S) \ge & \p_0\left( \S\cap \{B_0(R) \subset C(0)\} \right) = \E_p\left[ \ind{B_0(R) \subset C(0)} \p^\omega_0(\S) \right]\\
 \ge & \P_0^{B_0(R)}(\S) \P_p( B_0(R) \subset C(0) ) >0.
\end{align*}
We thus conclude Theorem  \ref{thm: Sprobab}-(3) . 

Let us prove Theorem \ref{thm: Sprobab+killing}. For the finite cluster case, by Proposition \ref{brw+killing}, we immediately have
\[
\{\omega: \p^\omega_0(\S)>0\}\cap\{\#\mathcal{C}(0) <\infty \} \overset{ \P_p-a.s. }{=} \{\omega: \#\mathcal{C}(0)<\infty, \rho_{\mathcal{C}(0)} m_\circ > 1\}.
\]
For the infinite cluster case, we will check the following Lemma which says that the infinite open cluster contains balls of arbitrarily large radius. 

\begin{lem}\label{percolation}
If $\{\#\mathcal{C}(0) = \infty\}$ happens with positive probability, i.e., $\theta_{\Z^d}(p)>0$, then
\begin{equation}
\P_p\left( \forall R\ge1, \exists x\in \mathcal{C}(0), \textrm{ s.t. } B_x(R) \subset \mathcal{C}(0) \vert \#\mathcal{C}(0) = \infty \right) =1.
\end{equation}
\end{lem}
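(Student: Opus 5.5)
It suffices to prove the statement for each fixed $R\ge1$, since a countable intersection of almost sure events is almost sure. Fix $R$ and let $B$ be the $\ell^1$-ball $B_0(R)$. The plan is to combine ergodicity of the percolation measure under translations with uniqueness of the infinite cluster.

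Consider the events
\[
E_x:=\{B_x(R)\subset \mathcal{C}_\infty\},
\]
where $\mathcal{C}_\infty$ denotes the infinite open cluster. When $\theta_{\Z^d}(p)>0$, this cluster exists almost surely and is unique by the Aizenman--Kesten--Newman / Burton--Keane theorem (see \cite{Grimmett99}).

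\textbf{Step 1: each $E_x$ has positive probability.} By translation invariance it suffices to treat $x=0$. I would argue by finite energy. Since $\theta_{\Z^d}(p)>0$, there is $M>R$ such that, with positive probability, $\mathcal{C}_\infty$ intersects $B_0(M)$. This event is almost the same as the event that some site of $\partial B_0(M)$ is connected to infinity using only sites outside $B_0(M)$, and that event is independent of the configuration inside $B_0(M)$. Intersecting it with the event that every site of $B_0(M)$ is open gives positive probability, and on the intersection $B_0(R)\subset B_0(M)\subset \mathcal{C}_\infty$. To be careful, I would choose $M$ so that the outside-connection event has positive probability; this holds because otherwise $\mathcal{C}_\infty$ would almost surely avoid every large box, which is absurd.

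\textbf{Step 2: some translate occurs almost surely.} The event $\bigcup_{x\in\Z^d}E_x$ is translation invariant, and $\P_p$ is ergodic, being an i.i.d. product measure. So this event has probability $0$ or $1$, and by Step 1 it has probability $1$. Alternatively, I would apply the ergodic theorem to $\ind{E_x}$ along $x=k\,e_1$, which shows that infinitely many $x$ satisfy $E_x$.

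\textbf{Step 3: conclusion on $\{\#\mathcal{C}(0)=\infty\}$.} On this event $\mathcal{C}(0)=\mathcal{C}_\infty$ by uniqueness. Therefore any $x$ with $B_x(R)\subset\mathcal{C}_\infty$ satisfies $x\in \mathcal{C}(0)$ and $B_x(R)\subset\mathcal{C}(0)$. Intersecting over all $R\in\N$ gives the lemma.

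The main obstacle is Step 1, namely making the finite-energy argument rigorous, with the independence of the inside and outside configurations handled correctly. A simpler route avoids the delicate part. Take any open path from $\partial B_0(M)$ to infinity that lies outside $B_0(M)$; opening all of $B_0(M)$ then connects the ball to that path, provided the path starts at a neighbour of the ball. This is exactly the outside event, and it has positive probability by the choice of $M$. The uniqueness input is quoted from the literature rather than proved.
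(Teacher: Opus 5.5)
Your proof is correct and has the same three-part structure as the paper's proof. First, one translate has positive probability. Second, a zero--one law applies to the translation-invariant union over $x$. Third, uniqueness of the infinite cluster identifies that cluster with $\mathcal{C}(0)$.

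The genuine difference is in Step 1. The paper does this step in one line with the FKG inequality. The events $\{B_x(R)\text{ fully open}\}$ and $\{\#\mathcal{C}(x)=\infty\}$ are both increasing, so their intersection has probability at least $p^{\#B_x(R)}\,\theta_{\Z^d}(p)>0$. On that intersection $B_x(R)\subset\mathcal{C}(x)$, since the ball is connected and contains $x$.

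Your independence (finite-energy) argument avoids any correlation inequality and also works. The part you flag as the main obstacle is in fact easy, and you may take $M=R$:
\begin{itemize}
\item On $\{\#\mathcal{C}(0)=\infty\}$, an infinite self-avoiding open path from $0$ leaves $B_0(R)$ for the last time through a site $y$ with $|y|_1=R+1$.
\item Hence the event that some such $y$ is joined to infinity by an open path avoiding $B_0(R)$ has probability at least $\theta_{\Z^d}(p)$.
\item No uniqueness input is needed at this stage.
\end{itemize}

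In Step 2, your appeal to ergodicity of the product measure under translations is the more accurate justification. The paper cites Kolmogorov's $0$--$1$ law, but the union over $x$ is translation invariant rather than literally a tail event.
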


The similar result holds for Bernoulli bond percolation in $\Z^d$, see for instance Lemma 4.3.1 in \cite{Trees-Networks}. The proof of Lemma \ref{percolation} is postponed to Section \ref{lems}. As a consequence of Lemma \ref{percolation} and Proposition \ref{brw+killing}, one sees that on $\{\#\mathcal{C}(0) = \infty\}$, a.s.,
\[
\p^\omega_0(\S) >0. 
\]

Combining the finite cluster case and the infinite cluster case, we obtain that
\begin{align*}
\P_p(\omega: \p^\omega_0(\S) > 0) = & \P_p(\omega: \#\mathcal{C}(0) = \infty ) + \P_p(\{\omega: \p^\omega_0(\S) > 0\}\cap\{\#\mathcal{C}(0) < \infty \})  \\
= & \theta_{\Z^d}(p) + \P_p\left( \{\omega: \#\mathcal{C}(0)<\infty, \rho_{\mathcal{C}(0)} m_\circ > 1\}\right),
\end{align*}
which completes the proof of Theorem \ref{thm: Sprobab+killing}. 

\section{Kolmogorov estimate and Yaglom theorem in critical case}\label{critical}

This section is devoted to dealing with the critical regime under Assumption \ref{A1}. We will establish the Kolmogorov estimate, i.e., Theorem \ref{thm: UKprobab} which generalizes the result of \cite{EP} for a special case. Moreover, we will prove the corresponding Yaglom Theorem \ref{thm: Yaglom} which confirms the conjecture in \cite{EP} in a more general setting.

Without loss of generality, we assume that $\mu_\circ\neq \delta_1$.

\subsection{Uniform Kolmogorov estimate: proof of Theorem \ref{thm: UKprobab}}

In this part, we study the decay of the quenched probability $\p^\omega_x(Z_n \ge 1) $ where $Z_n$ counts the number of alive particles at the $n$-th generation. By \eqref{keyeq}, one sees that
\begin{equation}\label{probZn}
p^\omega_n(x):= \p^\omega_x(Z_n \ge 1) = \widehat{\p}^\omega_x( L_n =0 ).
\end{equation}
By Theorem~\ref{thm: Sprobab}-(1), apparently, $p^\omega_n(x)\to 0$ as $n\to\infty$. We first prove that $p^\omega_n(x)$ is of order $\frac1n$, and then prove Theorem \ref{thm: UKprobab}.

\paragraph{Upper bound and lower bound for $p^\omega_x(n)$.} On the one hand, we can check recursively that $\e^\omega_x[Z_n^2] \le C_0 + \e^\omega_x[Z_{n-1}^2]$ for any $n\ge 1$, with $C_0:=\sigma^2_\circ\vee \sigma^2_\bullet$. This brings out that 
\begin{equation}\label{2mombd}
\sup_x\e^\omega_x[ Z_n ^2 ] \le C_0 n+ 1.
\end{equation}
Paley-Zygmund inequality shows that for any $x\in\Z^d$ and for all environment $\omega$,
\begin{equation}\label{lwbdp}
p^\omega_n(x) \ge \frac{ \e^\omega_x[Z_n]^2 }{ \e^\omega_x[Z_n^2] } \ge \frac{1}{C_0 n+1}, \forall n\ge 0.
\end{equation}
On the other hand, $\e^\omega_x[Z_n]=1$ for all $n\in\N$. Note that $Z_n\in\mathcal{F}_n$, again by \eqref{keyeq}, we have
\begin{align*}
1=\e^\omega_x[Z_n] = \e^\omega_x[ \ind{Z_n \ge 1} Z_n] = \widehat{\e}_x^\omega[ \ind{L_n =0} Z_n ].
\end{align*}
By use of \eqref{spine-decomp} and \eqref{subtree}, one gets that
\begin{align*}
1 = & \widehat{\e}_x^\omega[ \ind{L_n =0} (1+\sum_{j=1}^n R_{n,j}) ] = \widehat{\e}^\omega_x\left[   \widehat{\e}^\omega_x\left[  \ind{L_n =0} (1+\sum_{j=1}^n R_{n,j}) \vert \mathcal{G}_\infty \right] \right] \\
= & \widehat{\e}_x^\omega \left[ \prod_{j=1}^n \prod_{u\in \mathrm{L}_j}(1-p^\omega_{n-j}(S_u)) \times (1+ \sum_{j=1}^n r_j ) \right].
\end{align*}
Recall that $(S_{\zeta_j})_{j\ge0}$ is a simple random walk. For $u\in\mathrm{L}_j$, it makes an independent simple random walk jump from $S_{\zeta_{j-1}}$.  So, 
\begin{align}\label{mean-spineS}
&1=   \widehat{\e}_x^\omega \left[ \prod_{j=1}^n (1- p^\omega_{n-j}\ast \nu (S_{\zeta_{j-1}} ) )^{l_j} \times (1+ \sum_{j=1}^n r_j ) \right] \\
&= \sum_{(x_j)_{j\le n} \in x+\mathscr{S}_n} \widehat{\p}^\omega_x( (S_{\zeta_j})_{j\le n} = (x_j)_{j\le n}) \widehat{\e}_{(x_j)_{j\le n}}^\omega \left[ \prod_{j=1}^n (1- p^\omega_{n-j}\ast \nu (x_{j-1} ) )^{l_j} \times (1+ \sum_{j=1}^n r_j )  \right],\nonumber
%&\times  \widehat{\e}_x^\omega \left[ \prod_{j=1}^n (1- p^\omega_{n-j}\ast \nu (x_{j-1} ) )^{l_j} \times (1+ \sum_{j=1}^n r_j ) \Big \vert (S_{\zeta_j})_{1\le j \le n} = (x_j)_{1\le j\le n} \right] 
\end{align}
where $\mathscr{S}_m:=\{(x_j)_{j\le m} \in (\Z^d)^{m+1}: x_0=0, |x_j-x_{j-1}|_1=1, \forall 1\le j \le m\}$ is the collection of simple random walk path of length $m$ and
\begin{align*}
p^\omega_k\ast\nu( z ) = \sum_y p^\omega_k( z - y) \nu(y),\textrm{ and }  \widehat{\e}_{(x_j)_{j\le n}}^\omega \left[ \cdot \right] = \widehat{\e}^\omega \left[ \cdot \vert (S_{\zeta_j})_{0\le j \le n} = (x_j)_{0\le j\le n} \right] 
\end{align*}
In the same spirit, we also have
\begin{equation}\label{probab-spine}
p^\omega_n(x) =\widehat{\p}^\omega_x( L_n =0 )= \widehat{\e}_x^\omega \left[ \prod_{j=1}^n (1- p^\omega_{n-j}\ast \nu (S_{\zeta_{j-1}} ) )^{l_j} \right].
\end{equation}
Note that given the path $(S_{\zeta_j})_{0\le j\le n-1}$,  $(l_j,r_j)_{1\le j\le n}$ are independent random vectors. Thus,
\begin{align}\label{mean-spinelr}
&\widehat{\e}_{(x_j)_{j\le n}}^\omega \left[ \prod_{j=1}^n (1- p^\omega_{n-j}\ast \nu (x_{j-1} ) )^{l_j} \times (1+ \sum_{j=1}^n r_j )  \right] \\
=& \widehat{\e}_{(x_j)_{j\le n}}^\omega \left[ \prod_{j=1}^n (1- p^\omega_{n-j}\ast \nu (x_{j-1} ) )^{l_j} \right] \nonumber\\
&+ \sum_{j=1}^n \widehat{\e}_{(x_j)_{j\le n}}^\omega \left[ \prod_{i\neq j,1\le i\le n} (1- p^\omega_{n-i}\ast \nu (x_{i-1} ) )^{l_i} \right] \widehat{\e}_{(x_j)_{j\le n}}^\omega \left[ (1- p^\omega_{n-j}\ast \nu (x_{j-1} ) )^{l_j} \times r_j\right]. \nonumber
\end{align}
%In any situation, we can show that there exists some positive constant $C>0$ such that for any $\delta\in(0,1]$, 
%\begin{equation}\label{bdlr}
%\widehat{\e}_{(x_j)_{j\le n}}^\omega \left[(1-\delta)^{l_j} r_j \right] \ge C \widehat{\e}_{(x_j)_{j\le n}}^\omega \left[(1-\delta)^{l_j} \right] \widehat{\e}_{(x_j)_{j\le n}}^\omega \left[ r_j \right].
%\end{equation}
%In fact, if $\mu_\bullet = \delta_1$ and $\omega(x_{j-1})=0$, $(l_j,r_j)$ is trivially zero vector by \eqref{lawlr}. So, both sides of the inequality \eqref{bdlr} are zero. Otherwise, $(l_j,r_j)$ is not trivially zero vector and we deduce from \eqref{lawlr} that
%\begin{align*}
%\widehat{\e}_{(x_j)_{j\le n}}^\omega \left[(1-\delta)^{l_j} r_j \right] \ge & \widehat{\e}_{(x_j)_{j\le n}}^\omega \left[\ind{l_j=0} r_j \right] = 
%\begin{cases}
%\sum_{m\ge0} m \mu_\circ(m+1) = \mu_\circ(0), & \textrm{ if } \omega(x_{j-1} ) =1;\\
%\sum_{m\ge0} m \mu_\bullet(m+1) = \mu_\bullet(0), & \textrm{ if } \omega(x_{j-1} ) =0.
%\end{cases}
%\end{align*}
%On the other hand,
%\begin{align*}
%\widehat{\e}_{(x_j)_{j\le n}}^\omega \left[(1-\delta)^{l_j} \right] \widehat{\e}_{(x_j)_{j\le n}}^\omega \left[ r_j \right] \le \widehat{\e}_{(x_j)_{j\le n}}^\omega \left[ r_j \right] =
%\begin{cases}
%\frac12\sigma^2_\circ, & \textrm{ if } \omega(x_{j-1})=1;\\
%\frac12\sigma^2_\bullet, & \textrm{ if } \omega(x_{j-1})=0.
%\end{cases}
%\end{align*}
%So, we can always find a constant $C>0$ such that $\mu_\circ(0)\ge \frac{C}{2}\sigma^2_\circ$ and $\mu_\bullet(0)\ge \frac{C}{2}\sigma^2_\bullet$. This leads to \eqref{bdlr}. 
Plugging \eqref{bdlr} into \eqref{mean-spinelr} yields that
\begin{align*}
&\widehat{\e}_{(x_j)_{j\le n}}^\omega \left[ \prod_{j=1}^n (1- p^\omega_{n-j}\ast \nu (x_{j-1} ) )^{l_j} \times (1+ \sum_{j=1}^n r_j )  \right] \\
\ge &  \widehat{\e}_{(x_j)_{j\le n}}^\omega \left[ \prod_{j=1}^n (1- p^\omega_{n-j}\ast \nu (x_{j-1} ) )^{l_j} \right] \left(1+C_1 \frac{1}{2}\sigma_\circ^2 \sum_{j=1}^{n}\omega(x_{j-1}) + C_1\frac12\sigma^2_\bullet \sum_{j=1}^{n}[1-\omega(x_{j-1})] \right)\\
\ge & \widehat{\e}_{(x_j)_{j\le n}}^\omega \left[ \prod_{j=1}^n (1- p^\omega_{n-j}\ast \nu (x_{j-1} ) )^{l_j} \right]  \frac{C_1}{2}\sigma_\circ^2 \sum_{j=1}^{n}\omega(x_{j-1}).
\end{align*}
Using it in \eqref{mean-spineS} yields that
\begin{align*}
1\ge &   \sum_{(x_j)_{j\le n} \in x+\mathscr{S}_n} \widehat{\p}^\omega_x( (S_{\zeta_j})_{j\le n} = (x_j)_{j\le n}) \widehat{\e}_{(x_j)_{j\le n}}^\omega \left[ \prod_{j=1}^n (1- p^\omega_{n-j}\ast \nu (x_{j-1} ) )^{l_j} \right] \frac{C_1}{2}\sigma_\circ^2 \sum_{j=1}^{n}\omega(x_{j-1})\\
= &\frac{C_1}{2} \sigma^2_\circ \widehat{\e}^\omega_x\left[ \prod_{j=1}^n (1- p^\omega_{n-j}\ast \nu (S_{\zeta_{j-1}} ) )^{l_j} \times \#I_\circ(n) \right]
\end{align*}
where $I_\circ(n)=\{j\in\N\cap[1,n]: \omega(S_{\zeta_{j-1}})=1\}$. Note that $\#I_\circ(n)$ has the same law as $T_n$ studied in Section \ref{RWRS}. For arbitrarily small $\varepsilon>0$, one sees that
\begin{align*}
1\ge & \frac{C_1}{2} \sigma^2_\circ \widehat{\e}^\omega_x\left[ \prod_{j=1}^n (1- p^\omega_{n-j}\ast \nu (S_{\zeta_{j-1}} ) )^{l_j} \times \#I_\circ(n) [\ind{\#I_\circ(n) > (p-\varepsilon) n} + \ind{\#I_\circ(n) \le (p-\varepsilon) n} ]\right] \\
\ge & \frac{C_1}{2} \sigma^2_\circ  \times (p-\varepsilon)n \times \widehat{\e}^\omega_x\left[ \prod_{j=1}^n (1- p^\omega_{n-j}\ast \nu (S_{\zeta_{j-1}} ) )^{l_j} \ind{\#I_\circ(n) > (p-\varepsilon) n} \right].
\end{align*}
Meanwhile, note that
\begin{align*}
p^\omega_n(x) = & \widehat{\e}^\omega_x\left[ \prod_{j=1}^n (1- p^\omega_{n-j}\ast \nu (S_{\zeta_{j-1}} ) )^{l_j} \right] \\
\le & \widehat{\e}^\omega_x\left[ \prod_{j=1}^n (1- p^\omega_{n-j}\ast \nu (S_{\zeta_{j-1}} ) )^{l_j} \ind{\#I_\circ(n) > (p-\varepsilon) n} \right] + \widehat{\p}^\omega_x\left(\#I_\circ(n) \le (p-\varepsilon) n \right) .
\end{align*}
Comparing these two inequalities and using \eqref{RWRS-LDP}, we deduce that  $\P_p$-a.s.,
\[
\sup_{|x|_1\le n^2 } p^\omega_n(x)  \le  \frac{1}{ \frac{C_1}{2} \sigma^2_\circ  \times (p-\varepsilon)n} + c_0^\omega(\varepsilon) e^{-\frac14 c_0(\varepsilon) n^{\frac{d}{d+2}}}.
\]
%Combining it with \eqref{probab-spine} and \eqref{RWRS-LDP}, we then deduce that for any $x\in\Z^d$ such that $|x|_1\le n^2$, $\P_p$-a.s.,
%\begin{align*}
%p^\omega_n(x) = & \widehat{\e}^\omega_x\left[ \prod_{j=1}^n (1- p^\omega_{n-j}\ast \nu (S_{\zeta_{j-1}} ) )^{l_j} \right] \\
%\le & \widehat{\e}^\omega_x\left[ \prod_{j=1}^n (1- p^\omega_{n-j}\ast \nu (S_{\zeta_{j-1}} ) )^{l_j} \ind{\#I_\circ(n) > (p-\varepsilon) n} \right] + \widehat{\p}^\omega_x\left(\#I_\circ(n) \le (p-\varepsilon) n \right) \\
%\le & \frac{1}{ \frac{C}{2} \sigma^2_\circ  \times (p-\varepsilon)n} + C_\omega(\varepsilon) e^{-\frac14 C'_\varepsilon n^{\frac{d}{d+2}}}.
%\end{align*}
This gives us an upper bound for $p^\omega_n(x)$: $\P_p$-a.s., 
\begin{equation}\label{upbdp}
\max_{|x|_1\le n^2} p_n^\omega(x) \le \frac{ c_3^\omega }{n}, \forall n\ge 1.
\end{equation}

Next, let us turn to study the limit of $m p_m^\omega(x)$ for $|x|_1\le n$ and $an\le m\le bn$ with $0<a<b<\infty$ fixed.

\begin{proof}[ Proof of Theorem \ref{thm: UKprobab}]
In view of  \eqref{mean-spineS}, \eqref{probab-spine} and  \eqref{mean-spinelr}, we see that 
\begin{align}\label{mean-spineR}
1= p_n^\omega(x) + &\sum_{(x_j)_{j\le n} \in x+\mathscr{S}_n} \widehat{\p}^\omega_x( (S_{\zeta_j})_{j\le n} = (x_j)_{j\le n})  \nonumber   \\
&\times \sum_{j=1}^n \widehat{\e}_{(x_j)_{j\le n}}^\omega \left[ \prod_{i\neq j,1\le i\le n} (1- p^\omega_{n-i}\ast \nu (x_{i-1} ) )^{l_i} \right] \widehat{\e}_{(x_j)_{j\le n}}^\omega \left[ (1- p^\omega_{n-j}\ast \nu (x_{j-1} ) )^{l_j} \times r_j\right].
\end{align}
Observe that for the admissible simple random walk path $(x=x_0, x_1,\cdots, x_n)$, $\sup_{0\le k\le n}| x-x_k|_1\le n$. It is known from \eqref{upbdp} that $p^\omega_{n-j}\ast \nu (x_{j-1}) $ is uniformly $o_n(1)$ for $n-j\ge q_n := \ceil{n^{4/5}}$ and $|x|_1\le n^{3/2}\ll q_n^2$. %So, let us consider $\widehat{\e}_{(x_j)_{j\le n}}^\omega \left[(1-\delta)^{l_j} r_j \right] $ for $\delta\downarrow 0+$.

For $n\gg1$, applying \eqref{asymplr} for $j\le n-q_n$ and applying  \eqref{bdlr} for $n-q_n< j\le n$ in \eqref{mean-spineR} yields that
\begin{align*}
%&\widehat{\e}_{(x_j)_{j\le n}}^\omega \left[ \prod_{j=1}^n (1- p^\omega_{n-j}\ast \nu (x_{j-1} ) )^{l_j} \times (1+ \sum_{j=1}^n r_j )  \right] \\
1\le  p_n^\omega(x) + &\sum_{(x_j)_{j\le n} \in x+\mathscr{S}_n} \widehat{\p}^\omega_x( (S_{\zeta_j})_{j\le n} = (x_j)_{j\le n}) \times\widehat{\e}_{(x_j)_{j\le n}}^\omega \left[ \prod_{i=1}^n (1- p^\omega_{n-i}\ast \nu (x_{i-1} ) )^{l_i} \right]\\
&\times \left(  \sum_{1\le j\le n-q_n} \left( \widehat{\e}_{(x_j)_{j\le n}}^\omega \left[ r_j \right]+o_n(1)\right) +   C_2 \sum_{n-q_n< j\le n} \widehat{\e}_{(x_j)_{j\le n}}^\omega \left[ r_j \right]\right),
\end{align*}
and that
\begin{align*}
1\ge p_n^\omega(x) + &\sum_{x=x_0,x_1,\cdots, x_n\in\Z^d} \widehat{\p}^\omega_x( S_{\zeta_j} = x_j, \forall 1\le j\le n) \times\widehat{\e}_{(x_j)_{j\le n}}^\omega \left[ \prod_{i=1}^n (1- p^\omega_{n-i}\ast \nu (x_{i-1} ) )^{l_i} \right]\\
&\times \left(  \sum_{1\le j\le n-q_n} \left( \widehat{\e}_{(x_j)_{j\le n}}^\omega \left[ r_j \right] +o_n(1)\right) +   C_1 \sum_{n-q_n< j\le n} \widehat{\e}_{(x_j)_{j\le n}}^\omega \left[ r_j \right]\right)
\end{align*}
Note that 
\[
\sum_{j=1}^n  \widehat{\e}_{(x_j)_{j\le n}}^\omega \left[ r_j \right]  =  \frac12\sigma^2_\circ \sum_{j=1}^n \ind{\omega(x_{j-1})=1}+ \frac12\sigma^2_\bullet \sum_{j=1}^n\ind{\omega(x_{j-1})=0} .
\]
and that $ \sum_{n-q_n< j\le n} \widehat{\e}_{(x_j)_{j\le n}}^\omega \left[ r_j \right] + \sum_{1\le j\le n-q_n} o_n(1)= o(n)$. Recall \eqref{probab-spine} and set $\overline{I}(n):=\sigma^2_\circ \sum_{j=1}^n\ind{\omega(S_{\zeta_{j-1}})=1} + \sigma^2_\bullet \sum_{j=1}^n \ind{\omega(S_{\zeta_{j-1}})=0}$, we therefore obtain that
\begin{align*}
&|1-p_n^\omega(x) -  \widehat{\e}^\omega_x \left[ \prod_{i=1}^n (1- p^\omega_{n-i}\ast \nu (S_{\zeta_{i-1}} ) )^{l_i} \frac{\overline{I}(n)}{2} \right] |  
\le  o(n) p_n^\omega(x).
\end{align*}
By the upper bound \eqref{upbdp}, we get that uniformly for $|x|_1\le n^{3/2}$,
\begin{align}\label{mean-keyid}
&\widehat{\e}^\omega_x \left[ \prod_{i=1}^n (1- p^\omega_{n-i}\ast \nu (S_{\zeta_{i-1}} ) )^{l_i} \frac{\overline{I}(n)}{2} \right] 
= 1+o_n^\omega(1). 
\end{align}
 Observe that
\begin{equation*}
\overline{I}(n)= \sigma^2_\circ \sum_{j=1}^n \omega(S_{\zeta_{j-1}})+ \sigma^2_\bullet [n-\sum_{j=1}^n \omega(S_{\zeta_{j-1}})] = (\sigma^2_\circ -\sigma^2_\bullet) \# I_\circ(n) + \sigma^2_\bullet n.
\end{equation*}
In view of \eqref{asym-spine}, $\widehat{\p}^\omega_x-$a.s., $\frac{\overline{I}(n)}{n} \to p\sigma^2_\circ+ (1-p)\sigma^2_\bullet$. In general, this convergence cannot be applied directly here\footnote{ Unless the special case where $\sigma_\circ^2 = \sigma_\bullet^2$.}. We will replace $\overline{I}(n)$ by $n(p\sigma^2_\circ+ (1-p)\sigma^2_\bullet)$ in the expectation in \eqref{mean-keyid} and control the error term.  We write for simplicity that $ \overline{\sigma}^2:=  p \sigma^2_\circ+ (1-p)\sigma^2_\bullet$.
Then \eqref{mean-keyid} becomes
\begin{align}\label{mean-spine-probab}
1+o^\omega_n(1)% = & \widehat{\e}^\omega_x \left[ \prod_{i=1}^n (1- p^\omega_{n-i}\ast \nu (S_{\zeta_{i-1}} ) )^{l_i} \frac{\overline{\sigma}^2}{2}n \right]  +  \widehat{\e}^\omega_x \left[ \prod_{i=1}^n (1- p^\omega_{n-i}\ast \nu (S_{\zeta_{i-1}} ) )^{l_i} \left( \frac{\overline{I}(n) }{2} - \frac{\overline{\sigma}^2}{2}n \right)\right] \\
=& \frac{\overline{\sigma}^2}{2}n \times \widehat{\e}^\omega_x \left[ \prod_{i=1}^n (1- p^\omega_{n-i}\ast \nu (S_{\zeta_{i-1}} ) )^{l_i} \right] + Error_\eqref{mean-spine-probab},
%=&  \frac{\overline{\sigma}^2}{2}n \times p^\omega_n(x) + Error_\eqref{mean-spine-probab}, \nonumber
\end{align}
where 
\[
Error_\eqref{mean-spine-probab} : = \widehat{\e}^\omega_x \left[ \prod_{i=1}^n (1- p^\omega_{n-i}\ast \nu (S_{\zeta_{i-1}} ) )^{l_i} \left( \frac{\overline{I}(n) }{2} - \frac{\overline{\sigma}^2}{2}n \right)\right] .
\]
By \eqref{probab-spine}, one sees that 
\begin{equation}\label{mean-probab}
1+o^\omega_n(1)= \frac{\overline{\sigma}^2}{2}n \times p^\omega_n(x) + Error_\eqref{mean-spine-probab} .
\end{equation}
It remains to deal with $Error_\eqref{mean-spine-probab} $. Observe that for arbitrarily small $\varepsilon>0$,  
\begin{align*}
& | Error_\eqref{mean-spine-probab}  | \le   \widehat{\e}^\omega_x \left[ \prod_{i=1}^n (1- p^\omega_{n-i}\ast \nu (S_{\zeta_{i-1}} ) )^{l_i} \bigg\vert   \frac{\overline{I}(n) }{2} - \frac{\overline{\sigma}^2}{2}n \bigg\vert \right]\\
\le &   \widehat{\e}^\omega_x \left[ \prod_{i=1}^n (1- p^\omega_{n-i}\ast \nu (S_{\zeta_{i-1}} ) )^{l_i} \bigg\vert   \frac{\overline{I}(n) }{2} - \frac{\overline{\sigma}^2}{2}n \bigg\vert (\ind{| \frac{\overline{I}(n) }{2}- \frac{\overline{\sigma}^2}{2} n|\le \varepsilon n }+ \ind{|\frac{\overline{I}(n) }{2}- \frac{\overline{\sigma}^2}{2} n|> \varepsilon n }) \right] \\
\le & \varepsilon n   \widehat{\e}^\omega_x \left[ \prod_{i=1}^n (1- p^\omega_{n-i}\ast \nu (S_{\zeta_{i-1}} ) )^{l_i} \right] + \widehat{\e}^\omega_x\left[ \bigg\vert   \frac{\overline{I}(n) }{2} - \frac{\overline{\sigma}^2}{2}n \bigg\vert \ind{|\frac{\overline{I}(n) }{2}- \frac{\overline{\sigma}^2}{2} n|> \varepsilon n }\right].
\end{align*}
On the one hand, by \eqref{probab-spine} and \eqref{upbdp}, uniformly for $|x|_1\le n^{3/2}$,
\[
 \varepsilon n   \widehat{\e}^\omega_x \left[ \prod_{i=1}^n (1- p^\omega_{n-i}\ast \nu (S_{\zeta_{i-1}} ) )^{l_i} \right] \le \varepsilon c_3^\omega.
\]
On the other hand, as $ \overline{I}(n) \le (\sigma^2_\circ + \sigma^2_\bullet ) n$,  we have
\begin{align*}
&\widehat{\e}^\omega_x\left[ \bigg\vert   \frac{\overline{I}(n) }{2} - \frac{\overline{\sigma}^2}{2}n \bigg\vert \ind{|\frac{\overline{I}(n) }{2}- \frac{\overline{\sigma}^2}{2} n|> \varepsilon n }\right]
\le  (\sigma^2_\circ + \sigma^2_\bullet ) n \widehat{\p}^\omega_x\left( |\frac{\overline{I}(n) }{2}- \frac{\overline{\sigma}^2}{2} n|> \varepsilon n \right) \\
&=   (\sigma^2_\circ + \sigma^2_\bullet ) n  \widehat{\p}^\omega_x\left( | (\sigma_\circ^2-\sigma_\bullet^2)\#I_\circ(n) -(\sigma_\circ^2-\sigma_\bullet^2) p n | > 2\varepsilon n \right)= o_n^\omega(1),
\end{align*}
 uniformly for $|x|_1\le n^{3/2}$ by \eqref{RWRS-LDP}, because $\#I_\circ(n)$ has the same distribution as $T_n$ under $\Q^\omega$. As a consequence, we obtain that
\[
 \limsup_{n\to\infty}\sup_{|x|_1\le n^{3/2}}
 |\operatorname{Error}_{\eqref{mean-spine-probab}}|
 \le c_3^\omega\varepsilon.
\]
Going back to \eqref{mean-probab}, we thus deduce that for $\P_p$-a.s. $\omega$,  uniformly for $|x|_1\le n^{3/2}$,
\[
p^\omega_n(x) = \frac{1+o^\omega_n(1) -\operatorname{Error}_{\eqref{mean-spine-probab}}}{\frac{ \overline{\sigma}^2}{2} n}.
\]
This suffices to conclude that
\[
\lim_{n\to\infty} \sup_{|x|_1\le n}\sup_{an \le m \le bn} \bigg\vert m p^\omega_m(x) - \frac{2}{p\sigma^2_\circ + (1-p)\sigma^2_\bullet} \bigg\vert =0.
\]
\end{proof}

\subsection{Quenched Yaglom theorem: proof of Theorem \ref{thm: Yaglom}}

In this section, we establish the quenched Yaglom theorem via Stein method developed in \cite{PR}. In fact, Theorem 2.1 of \cite{PR} says that if $W$ is a non-negative random variable with finite variance under $\P$, then  the Wasserstein distance between the law of $W$ and the exponential law with parameter $1$ satisfies
%then the Wasserstein distance between the law of $W$ and the exponential law with parameter $1$ is bounded by $2\E[|W^e-W|]$ where $W^e$ has the equilibrium law w.r.t. $W$. 
\begin{equation}\label{steinbd}
d_W(\mathscr{L}(\frac{W}{\E[W]},\P), Exp(1) ) \le \frac{2}{\E[W]} \E[|W^e-W|],
\end{equation}
where $d_W(\mu,\nu):=\sup\{|\int f d\mu - \int f d\nu|: f: \R\to\R \textrm{ is Lipschitz and } \| f'\| \le 1\}$ and $\P(W^e \le x) = \frac{1}{\E[W]}\int_0^x \P(W>y)dy$. Note that the convergence in the Wasserstein distance implies the weak convergence. So, we only need to prove that the Wasserstein distance of the law of $\frac{Z_n}{\overline{\sigma}^2 n/2}$ under $\p^\omega_x(\cdot\vert Z_n \ge 1)$ and the exponential law with parameter $1$ converges to zero.

Nevertheless, the arguments in Section 3.3 of \cite{PR} for the classical Galton-Watson process,  can not be applied directly in our setting because of the inhomogeneous branching laws in the random environment $\omega$. New ideas are developed to construct the coupling of $W^e$ and $W$. More precisely, we will construct a r.v. $W_n$ such that
\[
\mathscr{L}(W_n, \widehat{\p}^\omega_x) = \mathscr{L}(Z_n, \p^\omega_x(\cdot\vert Z_n \ge 1)),
\]
and another r.v. $W_n^e$ such that $W_n^e$ has equilibrium law w.r.t. $W_n$ under $\widehat{\p}^\omega_x$.

\subsubsection{Finite second moment for $W_n$}

In this part, we check that for $\P_p$-a.s. environment $\omega$,
\begin{equation}\label{cond2mombd}
\sup_{n\ge1}\e^\omega\left[ \left( \frac{Z_n}{\overline{\sigma}^2 n/2} \right)^2 \vert Z_n \ge 1 \right] <\infty.
\end{equation}

First, it follows from \eqref{lwbdp} that 
\begin{align}\label{upbdcondm}
\e^\omega[Z_n\vert Z_n \ge 1] = & \frac{ \e^\omega[Z_n\ind{Z_n \ge 1 }] }{ \p^\omega(Z_n \ge 1 )} = \frac{1}{\p^\omega(Z_n \ge 1 ) } \le (C_0+1)n, \forall n\ge 1.
\end{align}
By Theorem \ref{thm: UKprobab}, one obtains that
\begin{equation}\label{asymcondm}
\e^\omega[Z_n\vert Z_n \ge 1] \sim \frac{p\sigma^2_\circ+ (1-p)\sigma^2_\bullet}{2}n = \frac{\overline{\sigma}^2}{2} n.
\end{equation} 
Next, for the second moment, by \eqref{2mombd} and \eqref{lwbdp}, one sees that
\begin{align*}
\e^\omega[Z_n^2 \vert Z_n \ge 1] = \frac{\e^\omega[Z_n^2]}{ \p^\omega(Z_n \ge 1 ) } \le (C_0 n +1)^2,
\end{align*}
which is sufficient to conclude \eqref{cond2mombd}.
\subsubsection{Coupling under $\widehat{\p}^\omega_x$}

In fact, we only need to prove the Yaglom theorem under $\p^\omega_0$, as for any $x\in\Z^d$, the shifted environment $(\theta_x\omega)(z)=\omega(z+x)$ has the same law $\P_p$. So, it suffices to make the coupling under $\widehat{\p}^\omega$.

We start with constructing the r.v. $W_n^e$ with equilibrium law under $\widehat{\p}_x^\omega$.  If $W$ is a non-negative random variable with finite mean under $\P$, let $W^+$ be a r.v. distributed as $W$ conditioned on $W>0$, there is a way to obtain a r.v. of the equilibrium law w.r.t. $W^+$ as follow:
\begin{itemize}
\item First, take $W^{sb}$ to be a r.v. with the size-biased law of $W$, i.e., $\P(W^{sb}\in \cdot ) = \frac{1}{\E[W]}\E[W\ind{W\in\cdot}]$. Then $W^{sb} \egloi (W^+)^{sb}$.
\item Secondly, take an independent r.v. $U$ which is uniformly distributed in $(0,1)$. Then $UW^{sb}\egloi (W^+)^e$ has the equilibrium law w.r.t. $W^+$.
\end{itemize}

In view of \eqref{prop-spine-1}, $\mathscr{L}(Z_n, \widehat{\p}^\omega_x)$ has the size-biased law of $\mathscr{L}(Z_n, \p^\omega_x(\cdot\vert Z_n \ge 1))$. As $\zeta_n$ is uniformly chosen from $Z_n$ individuals of the $n$-th generation, $R_n-U$ has the same law as $UZ_n$ under $\widehat{\p}^\omega_x$ with $U\sim U(0,1)$ a r.v. independent of all others. Thus, we get $W_n^e=R_n-U$ which has equilibrium law w.r.t. $\mathscr{L}(Z_n, \p^\omega_x(\cdot\vert Z_n \ge 1))$.

Next, we need to construct a r.v. $W_n$ which, under $\widehat{\p}^\omega_x$, is distributed as $\mathscr{L}(Z_n, \p^\omega_x(\cdot\vert Z_n \ge 1))$. For any continuous and bounded function $f:\R\to\R$, it follows from \eqref{keyeq} that
\begin{align*}
&\e^\omega_x[ f(Z_n) \vert Z_n \ge 1 ] =  \frac{1}{\p^\omega_x(Z_n \ge 1)} \e^\omega_x[ f(Z_n) \ind{Z_n\ge 1} ] = \frac{1}{\widehat{\p}^\omega_x(L_n=0)} \widehat{\e}^\omega_x [f(Z_n)\ind{L_n=0}]\\
=& \frac{1}{\widehat{\p}^\omega_x(L_n=0)}  \widehat{\e}^\omega_x [f(R_n)\ind{L_n=0}]\\
=& \sum_{(x_j)_{j\le n}\in x+\mathscr{S}_n} \widehat{\p}^\omega_x((S_{\zeta_j})_{j\le n} =( x_j)_{j\le n} \vert L_n =0 ) \widehat{\e}^\omega_x [f(R_n)\vert L_n=0, (S_{\zeta_j})_{j\le n} =( x_j)_{j\le n} ].
\end{align*}
First, let $(\Xi_{j}^{(n)}; 0\le j\le n)$ be a random vector such that
\[
\widehat{\p}^\omega_x\left( (\Xi_j^{(n)})_{0\le j\le n}\in \cdot \right ) = \widehat{\p}^\omega_x((S_{\zeta_j})_{0\le j\le n} \in \cdot \vert L_n =0 ).
\]
Secondly, as we can take $f(t) = e^{-\lambda t}$ with $\lambda>0$, note that
\begin{align*}
 &\widehat{\e}^\omega_x [e^{-\lambda R_n} \vert L_n=0, (S_{\zeta_j})_{j\le n} =( x_j)_{j\le n} ]\\
 =& \frac{ \widehat{\e}^\omega_x \left[e^{-\lambda R_n} \ind{ L_n=0, (S_{\zeta_j})_{j\le n} =( x_j)_{j\le n} } \right] }{ \widehat{\p}^\omega_x(  L_n=0, (S_{\zeta_j})_{j\le n} =( x_j)_{j\le n}  )} 
 = \frac{ \widehat{\e}^\omega_{(x_j)_{j\le n}} \left[e^{-\lambda R_n} \ind{ L_n=0} \right] }{\widehat{\e}^\omega_{(x_j)_{j\le n}} \left[\ind{L_n=0}\right]} ,
\end{align*}
which by independence of $(l_j,r_j)$ implies that 
\begin{align*}
 \widehat{\e}^\omega_x [e^{-\lambda R_n} \vert L_n=0, (S_{\zeta_j})_{j\le n} =( x_j)_{j\le n} ]=&e^{-\lambda} \frac{\prod_{j=1}^n \widehat{\e}^\omega_{(x_j)_{j\le n}} \left[ e^{-\lambda R_{n,j}}\ind{L_{n,j}=0} \right] }{ \prod_{j=1}^n  \widehat{\e}^\omega_{(x_j)_{j\le n}} \left[ \ind{L_{n,j}=0} \right]} \\
 =& e^{-\lambda} \prod_{j=1}^n  \widehat{\e}^\omega_{(x_j)_{j\le n}} \left[ e^{-\lambda R_{n,j}}\vert L_{n,j}=0 \right].
\end{align*}
Here, given $(\Xi_j^{(n)})_{0\le j\le n} = (x_j)_{0\le j\le n}$, for any $1\le j\le n$, we introduce a new random variable $R^*_{n,j}$ which, under $\widehat{\p}^\omega_{(x_j)_{j\le n}}$, is distributed as $R_{n,j}$ under $\widehat{\p}^\omega_{(x_j)_{j\le n}}(\cdot \vert L_{n,j}=0)$ as follows: 
\begin{itemize}
\item let $\{R'_{n,j}\}_{1\le j\le n}$ be a family of independent random variables, which is independent of all other objects, so that $R'_{n,j}$ is distributed as $R_{n,j}$ under $\widehat{\p}^\omega_{(x_j)_{j\le n}}(\cdot \vert L_{n,j}=0)$;
\item let
\[
R^*_{n,j}:= R_{n,j}\ind{L_{n,j}=0} + R'_{n,j}\ind{L_{n,j}\ge 1}.
\]
\end{itemize}
Set $R^*_n:= 1+ \sum_{j=1}^n R^*_{n,j}$. Then,
\begin{align*}
\e^\omega_x[e^{-\lambda Z_n } \vert Z_n \ge 1 ] = &  \sum_{(x_j)_{j\le n}\in x+\mathscr{S}_n} \widehat{\p}^\omega_x((\Xi_j^{(n)})_{j\le n} =( x_j)_{j\le n}  ) \widehat{\e}^\omega_{(x_j)_{j\le n}}[e^{-\lambda R_n^*}] .%= \widehat{\e}_x^\omega[e^{-\lambda R_n^*}].
\end{align*}

Now, we need to couple $R_n-U$ and $R_n^*$ so that we could control $\widehat{\e}^\omega_x[|R_n-U - R_n^*|]$. We first couple the motions of the spine $(S_{\zeta_j})_{j\le n}$ and $(\Xi_j^{(n)})_{j\le n}$. Take a fixed and small $\delta\in(0,1)$ and set $m_n=\floor{m_n}$ and $k_n=n-m_n$, let us compare the laws of $(S_{\zeta_j})_{j\le m_n}$ and $(\Xi_j^{(n)})_{j\le m_n}$ in the following lemma.

\begin{lem}\label{couple-spine}
For any fixed $\delta\in(0,1)$, for $\P_p$-a.s. environment $\omega$, we have
\[
\lim_{n\to\infty} \sum_{(x_j)_{j\le m_n} \in \mathscr{S}_{m_n}} |\widehat{\p}^\omega( (\Xi_j^{(n)})_{j\le m_n} = (x_j)_{j\le m_n} ) - \P( (S_j)_{j\le m_n} = (x_j)_{j\le m_n}) | =0,
\]
where $\mathscr{S}_m=\{(x_j)_{j\le m} \in (\Z^d)^{m+1}: x_0=0, |x_j-x_{j-1}|_1=1, \forall 1\le j \le m\}$ is the collection of simple random walk path with length $m$.
\end{lem}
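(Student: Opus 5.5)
The plan is to write the law of $(\Xi^{(n)}_j)_{j\le m_n}$ explicitly as a density with respect to the simple random walk law, and then show that this density tends to $1$ in $\Q_0$-probability. I read the garbled definition as $m_n=\floor{(1-\delta)n}$, so that $k_n=n-m_n\sim\delta n$. Since both sides of the lemma are probability measures on $\mathscr{S}_{m_n}$, Scheffé's lemma then gives convergence in total variation, which is exactly the statement.

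\textbf{Step 1: explicit density.} By \eqref{probab-spine} and the conditional independence of $(l_j)_j$ given the spine path, the weight $\widehat{\e}^\omega_{(x_j)}[\ind{L_n=0}]$ factorizes over $j$. Summing over the steps $j>m_n$ and using the Markov property of the spine together with \eqref{probab-spine} at the point $x_{m_n}$, the remaining factor equals $p^\omega_{k_n}(x_{m_n})$. Hence, for $(x_j)_{j\le m_n}\in\mathscr{S}_{m_n}$,
\[
\frac{\widehat{\p}^\omega\big((\Xi^{(n)}_j)_{j\le m_n}=(x_j)_{j\le m_n}\big)}{\Q_0\big((S_j)_{j\le m_n}=(x_j)_{j\le m_n}\big)}
=\frac{p^\omega_{k_n}(x_{m_n})}{p^\omega_n(0)}\prod_{j=1}^{m_n}\phi_j(x_{j-1}),
\]
where $\phi_j(y):=\widehat{\e}^\omega[(1-q_j(y))^{l_j}\mid S_{\zeta_{j-1}}=y]$ and $q_j(y):=p^\omega_{n-j}\ast\nu(y)$.

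\textbf{Step 2: asymptotics of each factor.} For $j\le m_n$ we have $n-j\ge k_n\ge \delta n/2$, and along the path $|x_{j-1}|_1\le n$. So Theorem \ref{thm: UKprobab}, applied with $a=\delta/2$ and $b=1$, gives
\[
q_j(x_{j-1})=\frac{2+o^\omega_n(1)}{\overline{\sigma}^2(n-j)}
\]
uniformly in the path and in $j\le m_n$. The same theorem gives $p^\omega_{k_n}(x_{m_n})/p^\omega_n(0)=(1+o^\omega_n(1))\,n/k_n$.

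To expand $\log\phi_j$ without using any moment of $l_j$ beyond the first, I argue as in the proof of \eqref{asymplr}. The ratio $(1-(1-q)^{l})/q$ converges to $l$ as $q\to0$ and is dominated by $l$, so by dominated convergence
\[
\phi_j(y)=1-q_j(y)\,\widehat{\e}^\omega[l_j\mid S_{\zeta_{j-1}}=y]\,\bigl(1+o(1)\bigr),
\]
uniformly. By \eqref{meanlr} the conditional mean of $l_j$ is $\tfrac12\sigma^2(y)$, where $\sigma^2(y)$ denotes $\sigma^2_\circ$ or $\sigma^2_\bullet$ according to $\omega(y)$. Since $q_j=O(1/n)$, this gives
\[
\sum_{j\le m_n}\log\phi_j(x_{j-1})=-(1+o(1))\sum_{j\le m_n}\frac{\sigma^2(x_{j-1})}{\overline{\sigma}^2(n-j)}+O(1/n).
\]

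\textbf{Step 3: averaging, the main obstacle.} It remains to prove that, in $\Q_0^\omega$-probability,
\[
\sum_{j\le m_n}\frac{\sigma^2(S_{j-1})}{\overline{\sigma}^2(n-j)}\longrightarrow \log\frac{1}{\delta},
\]
which equals $\lim_n\log(n/k_n)$, so that the density tends to $1$.

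\begin{itemize}
\item Split $[1,m_n]$ into $K=\lceil 1/\varepsilon\rceil$ blocks of length about $\varepsilon n$. On each block the weight $1/(n-j)$ is constant up to a factor $1+O(\varepsilon/\delta)$.
\item On each block, the fraction of open sites visited must be within $\varepsilon$ of $p$. Because $\sigma^2(y)=(\sigma^2_\circ-\sigma^2_\bullet)\omega(y)+\sigma^2_\bullet$, this is a statement about $T$ along a block.
\item By the Markov property at the block's starting point $S_{i}$, with $|S_i|_1\le n\le(\varepsilon n)^2$, the uniform quenched bound \eqref{RWRS-LDP} applied with length $\varepsilon n$ makes each block fail with probability at most $c^\omega_0(\varepsilon)e^{-c(\varepsilon n)^{d/(d+2)}}$. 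A union bound over the $K$ blocks handles all of them.
\item On the good event the sum equals $\sum_j 1/(n-j)$ up to a relative error $O(\varepsilon+\varepsilon/\delta)$, and $\sum_{j\le m_n}1/(n-j)\to\log(1/\delta)$.
\end{itemize}

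Letting $n\to\infty$ and then $\varepsilon\to0$ shows the density converges to $1$ in $\Q_0$-probability. Scheffé's lemma, which applies because both measures are probability measures, concludes. The delicate point is keeping all $o^\omega_n(1)$ terms uniform over paths, which is why the uniformity in $|x|_1\le n$ in Theorem \ref{thm: UKprobab} and in $|x|_1\le n^2$ in \eqref{RWRS-LDP} is essential.
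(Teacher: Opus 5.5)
Your proof is correct, but it is organized differently from the paper's. The paper never writes the density of $(\Xi^{(n)}_j)_{j\le m_n}$ directly. Instead it inserts an intermediate law: the spine conditioned on $A_n^\delta=\cap_{j\le m_n}\{L_{n,j}=0\}$. It bounds the distance from this law to the simple random walk by Cauchy--Schwarz and the monotonicity $\operatorname{Var}(\widehat{\e}^\omega[X\mid\mathcal G])\le \operatorname{Var}(\widehat{\e}^\omega[X\mid\mathcal H])$, passing to the finer $\sigma$-field $\mathcal H$ that also contains $(l_j,r_j)_{j\le m_n}$. On $\mathcal H$ the conditional probability is the explicit random product $\prod_{j\le m_n}(1-p^\omega_{n-j}\ast\nu(S_{\zeta_{j-1}}))^{l_j}$. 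The paper shows this product converges almost surely to $\delta$, using the strong law for $(l_j)$ along the spine (Lemma~\ref{LLNlr}) and the Abel-summation fact \eqref{reweightedy}. A second step compares conditioning on $A_n^\delta$ with conditioning on $\{L_n=0\}$ via the Markov property at time $m_n$. This produces exactly your factor $p^\omega_{k_n}(x_{m_n})$, which is controlled by the uniformity in Theorem~\ref{thm: UKprobab}.

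You instead integrate out the $l_j$ first, so the likelihood ratio becomes an explicit function of the path. The $l_j$ then enter only through their conditional means $\frac12\sigma^2(\cdot)$, via a first-order expansion that needs only finite first moments. What remains is a weighted law of large numbers for the scenery along a simple random walk, which you obtain in probability from the block decomposition and \eqref{RWRS-LDP}, and Scheffé's lemma concludes. Your route avoids the auxiliary measure, the variance comparison, and any almost-sure statement about the $l_j$. The paper's route avoids computing and expanding the factors $\phi_j$.

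Your Step 3 could also be shortened by applying \eqref{reweightedy} to $y_j=\sigma^2(S_{j-1})$ together with Lemma~\ref{Q-SLLN}. That gives almost-sure convergence of the weighted scenery sum, without blocks.
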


The proof of Lemma \ref{couple-spine} is postponed to Section \ref{lems}. By admitting it now, for any fixed $\delta\in(0,1)$, it follows from the optimal coupling property of the total variation distance and the gluing lemma that we could couple $(S_j)_{j\le n}$ and $(\Xi_j^{(n)})_{j\le n}$ in the same probability space so that $\P_p$-a.s. 
\begin{equation}\label{TV-coupling}
\widehat{\p}^\omega\left( \cup_{0\le j\le m_n} \{\Xi_j^{(n)} \neq S_j \} \right) \le \varepsilon_n(\delta, \omega),
\end{equation}
with $\varepsilon_n(\delta,\omega)\to 0$  as $n\to\infty$. 

\begin{figure}[H]
    \centering

    \makebox[\textwidth][c]{%
        \begin{subfigure}[b]{0.54\textwidth}
            \centering
            \includegraphics[width=\linewidth]{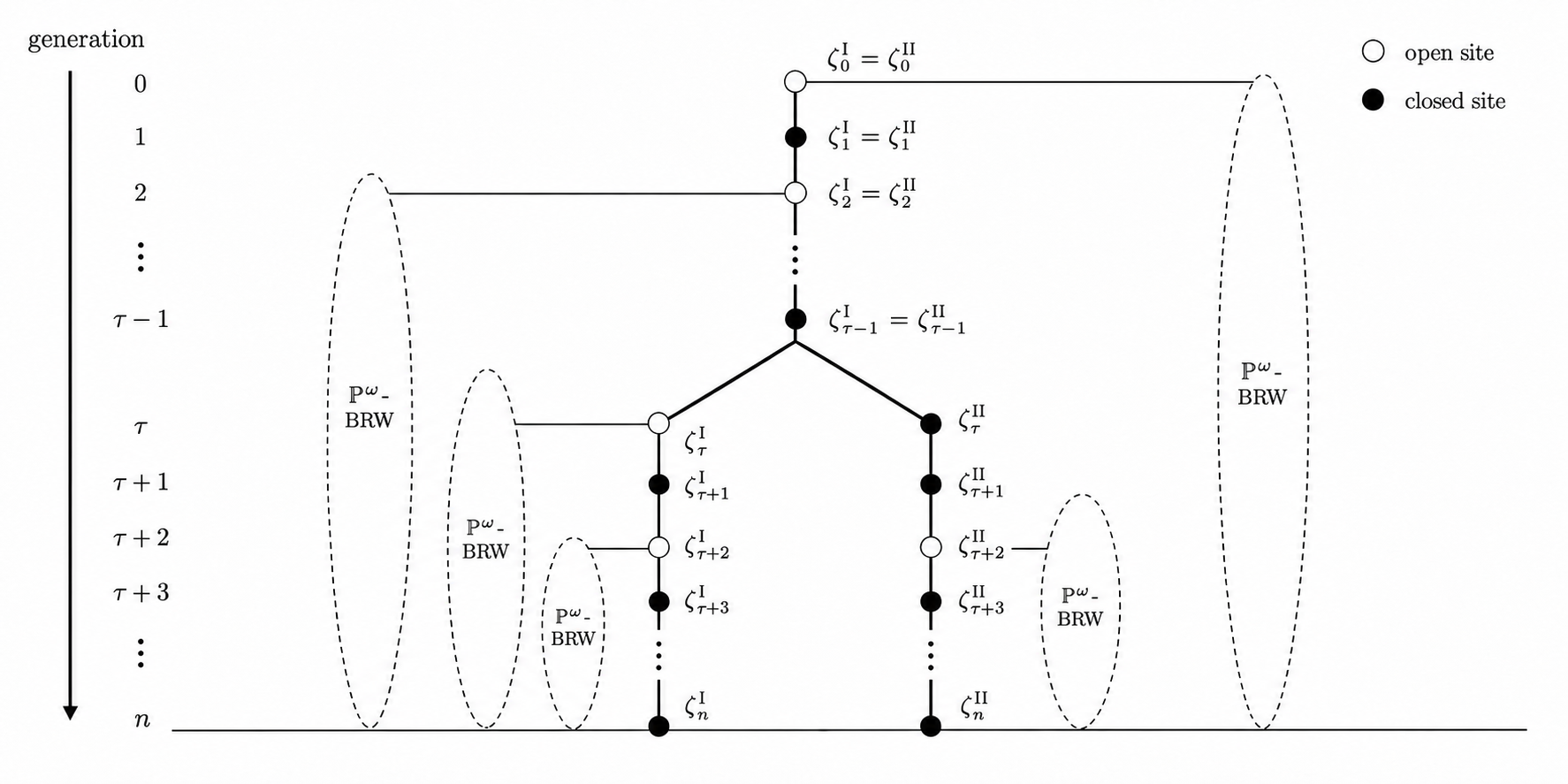}
            \caption{Genealogical representation.}
            \label{fig:genealogical}
        \end{subfigure}%
        \hspace{-0.04\textwidth}%
        \begin{subfigure}[b]{0.54\textwidth}
            \centering
            \includegraphics[
                width=\linewidth,
                trim={0 110bp 0 0},
                clip
            ]{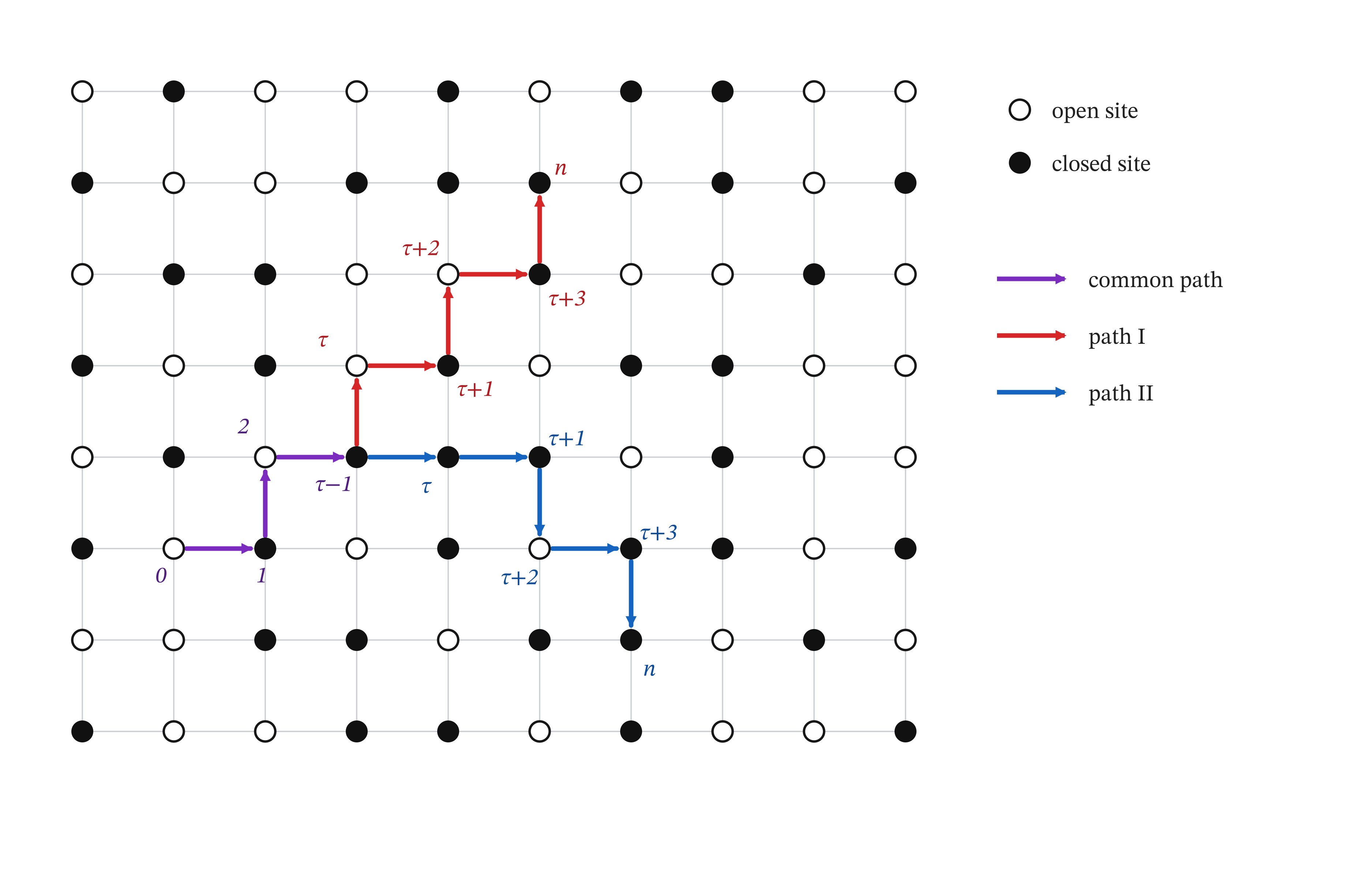}
            \caption{Spatial representation of two spines.}
            \label{fig:spatial}
        \end{subfigure}%
    }

    \caption{The system with 2 spines.}
    \label{fig:two-spine-comparison}
\end{figure}
Now we are ready to construct a system with 2 spines to realize the coupling of $R_n-U$ and $R_n^*$, see Fig \ref{fig:two-spine-comparison}. 
\begin{itemize}
\item Given the environment $\omega$, we sample two paths $(\xi_j^{I}, \xi_j^{I\!I})_{j\le n}$ according to the joint law of $(\Xi_j^{(n)}, S_j)_{j\le n}$ which correspond to the motion of two spines with one common ancestor, denoted by $(\zeta_j^{I})_{j\le n}$ and $(\zeta_j^{I\!I})_{j\le n}$, respectively. Note that $\zeta_0^I=\zeta_0^{I\!I}$ and $\xi_0^I=\xi_0^{I\!I}=0\in\Z^d$. $\zeta_{j+1}^I$ is viewed as a child of $\zeta_j^I$ while $\zeta_{j+1}^{I\!I}$ is viewed as a child of $\zeta_j^{I\!I}$. Up to the first splitting time of the two paths $\tau:=\inf\{ m \ge 1: \xi_m^I\neq \xi_m^{I\!I}\}$, we view $\zeta_j^I=\zeta_j^{I\!I}$ for all $j<\tau$. 
\item Given the paths of the spines $(\xi_j^{I}, \xi_j^{I\!I})_{j\le n}=(x_j,y_j)_{j\le n}$, for $j<\tau-1$, $\zeta_j^I=\zeta_j^{I\!I}$ and to $\zeta_j^I$ we attach $l_{j+1}$ children on the left of $\zeta^I_{j+1}$ and attach $r_{j+1}$ children on the right of $\zeta^I_{j+1}$ so that $(l_{j+1}, r_{j+1})$ are independent and distributed as \eqref{lawlr}; for $\tau-1\le j\le n-1$, $\zeta_j^I$ and $\zeta_j^{I\!I}$ independently produce their children so that $l_{j+1}^I$ (and $r_{j+1}^I$) siblings are put on the left (and right respectively) of $\zeta^I_{j+1}$ and $l_{j+1}^{I\!I}$ (and $r_{j+1}^{I\!I}$) siblings are put on the left (and right respectively) of $\zeta_{j+1}^{I\!I}$. Each attached child makes independently a simple random walk jump from the position of its parent and then produces its own descendants as the usually branching random walk in the environment $\omega$. For conveniance, write $l_j^I=l_j^{I\!I}=l_j$ and $r_j^I=r_j^{I\!I}=r_j$ for $j\le \tau-1$.
\item At the $n$-th generation, we count the number of particles descending from the $l_j^I$ particles on the left (or $r_j^I$ on the right) of $\zeta_i^I$ and denote it by $L^I_{n,j}$ (or $R_{n,j}^I$ respectively). Similarly, we get $(L_{n,j}^{I\!I}, R_{n,j}^{I\!I})$. 
\end{itemize}
We thus get a model with 2 spines up to the $n$-th generation and denote it quenched law by $\widehat{\p}^{\omega, \textrm{2-spine}}$. The corresponding expectation is denoted by $\widehat{\e}^{\omega, \textrm{2-spine}}$. Apparently, if we take $R_n^{I\!I}:=1+\sum_{j=1}^n R_{n,j}^{I\!I}$ and $U$ an independent r.v. uniformly distributed in $(0,1)$, then under $\widehat{\p}^{\omega, \textrm{2-spine}}$, $R_n^{I\!I}-U$ has the equilibrium law w.r.t. $\mathscr{L}(Z_n, \p^\omega(\cdot\vert Z_n \ge1))$.

 Denote the quenched law conditioned on $(\xi_j^{I}, \xi_j^{I\!I})_{j\le n}=(x_j,y_j)_{j\le n}$ by $\widehat{\p}^{\omega, \textrm{2-spine}}_{(x_j)_{j\le n}, (y_j)_{j\le n}}$. Under $\widehat{\p}^{\omega, \textrm{2-spine}}_{(x_j)_{j\le n}, (y_j)_{j\le n}}$, we have the first splitting time $\tau_{(x_j,y_j)_{j\le n}}:=\inf\{ 1\le k\le n: x_k\neq y_k \}$ with the convention that $\inf\emptyset = n+1$. As stated above, we have constructed two spines $(\zeta_j^I, \zeta_j^{I\!I})_{j\le n}$. Considering the process associated with the first spine, under $\widehat{\p}^{\omega, \textrm{2-spine}}_{(x_j)_{j\le n}, (y_j)_{j\le n}}$, let $\widetilde{R}_{n,j}^I$ be an independent r.v. distributed as $R^I_{n,j}$ conditioned on $\{ L_{n,j}^I = 0\}$ and let
 \[
 R^{I,*}_{n,j}:= \widetilde{R}_{n,j}^I\ind{L^I_{n,j} \ge 1} + R^I_{n,j}\ind{ L^I_{n,j}=0 }, \forall 1\le j\le n.
 \]
Then, under $\widehat{\p}^{\omega, \textrm{2-spine}}$, $R^{I,*}_{n}:= 1+ \sum_{j=1}^n R^{I,*}_{n,j}$ is distributed as $\mathscr{L}(Z_n, \p^\omega(\cdot\vert Z_n \ge1))$. We also set $R_n^{I}:=1+\sum_{j=1}^n R_{n,j}^I$. We thus deduce from \eqref{steinbd} and \eqref{asymcondm} that
\begin{align}\label{dWbd}
& d_W( \mathscr{L}(\frac{Z_n}{ \e^\omega[Z_n \vert Z_n \ge 1] }, \p^\omega(\cdot\vert Z_n \ge1)), Exp(1) ) \le  \frac{2 \widehat{\e}^{\omega, \textrm{2-spine}}\left[ | R^{I,*}_n - (R^{I\!I}_n -U ) | \right] }{ \e^\omega[Z_n \vert Z_n \ge 1]  }\\
\le &  \frac{2}{ \e^\omega[Z_n \vert Z_n \ge 1]  } \left( 1+  \widehat{\e}^{\omega, \textrm{2-spine}}\left[ | R^{I,*}_n - R^{I\!I}_n| \right] \right)\nonumber\\
= & o_n^\omega(1)+ \frac{2}{\overline{\sigma}^2/2+o_n^\omega(1)}\left[\frac1n   \widehat{\e}^{\omega, \textrm{2-spine}}\left[ | R^{I,*}_n - R^{I}_n| \right] +  \frac1n   \widehat{\e}^{\omega, \textrm{2-spine}}\left[ | R^{I}_n - R^{I\!I}_n| \right]  \right].\nonumber
\end{align}
On the one hand, observe that
\begin{align*}
 & \widehat{\e}^{\omega, \textrm{2-spine}}\left[ | R^{I,*}_n - R^{I}_n| \right]  =   \widehat{\e}^{\omega, \textrm{2-spine}}\left[ | \sum_{j=1}^n (R^{I,*}_{n,j} - R^I_{n,j} ) | \right] \\
 & \le  \widehat{\e}^{\omega, \textrm{2-spine}}\left[ \sum_{j=1}^n  \widetilde{R}_{n,j}^I\ind{L^I_{n,j} \ge 1} +\sum_{j=1}^n  R^I_{n,j}\ind{ L^I_{n,j}\ge 1 } \right]\\
 &\le \sup_{(x_j)_{j\le n}, (y_j)_{j\le n}} \widehat{\e}_{(x_j)_{j\le n}, (y_j)_{j\le n}}^{\omega, \textrm{2-spine}}\left[ \sum_{j=1}^n  \widetilde{R}_{n,j}^I\ind{L^I_{n,j} \ge 1} +\sum_{j=1}^n  R^I_{n,j}\ind{ L^I_{n,j}\ge 1 } \right].
\end{align*}
Here we see that
\begin{align*}
 &\widehat{\e}_{(x_j)_{j\le n}, (y_j)_{j\le n}}^{\omega, \textrm{2-spine}}\left[ \sum_{j=1}^n  \widetilde{R}_{n,j}^I\ind{L^I_{n,j} \ge 1} +\sum_{j=1}^n  R^I_{n,j}\ind{ L^I_{n,j}\ge 1 } \right] \\
 &=\sum_{j=1}^n \left( \widehat{\e}^\omega_{(x_j)_{j\le n}}[R_{n,j}\vert L_{n,j}=0] \widehat{\p}^\omega_{(x_j)_{j\le n}}(L_{n,j}\ge 1) + \widehat{\e}^\omega_{(x_j)_{j\le n}}[ R_{n,j}\ind{L_{n,j}\ge 1} ] \right)\\
 &= \sum_{j=1}^n  \frac{ \widehat{\e}^\omega_{(x_j)_{j\le n}}[r_j(1-p^\omega_{n-j}\ast\nu(x_{j-1}))^{l_j} ] }{\widehat{\e}^\omega_{(x_j)_{j\le n}}[(1-p^\omega_{n-j}\ast\nu(x_{j-1}))^{l_j} ]} \widehat{\e}^\omega_{(x_j)_{j\le n}}[1-(1-p^\omega_{n-j}\ast\nu(x_{j-1}))^{l_j} ] \\
 & \qquad + \sum_{j=1}^n \widehat{\e}^\omega_{(x_j)_{j\le n}}\left[ r_j[1-(1-p^\omega_{n-j}\ast\nu(x_{j-1}))^{l_j} ] \right].
\end{align*}
By \eqref{bdlr} and the fact that $1-(1-\delta)^{l_j} \le 1\wedge (l_j\delta)$ for any $\delta\in[0,1]$, one obtains that 
\begin{align*}
& \widehat{\e}^{\omega, \textrm{2-spine}}\left[ | R^{I,*}_n - R^{I}_n| \right]  \\
\le & C_2 \sum_{j=1}^n \widehat{\e}^\omega_{(x_j)_{j\le n}}[r_j] \widehat{\e}^\omega_{(x_j)_{j\le n}}\left[ 1 \wedge (l_j p^\omega_{n-j}\ast\nu(x_{j-1}))\right] + \sum_{j=1}^n \widehat{\e}^\omega_{(x_j)_{j\le n}}\left[ r_j\left(1 \wedge (l_j p^\omega_{n-j}\ast\nu(x_{j-1})) \right)\right],
\end{align*}
where $\sup_{(x_j)_{j\le n}} \widehat{\e}^\omega_{(x_j)_{j\le n}}[r_j] \le  C_0$. For $1\le j\le m_n$ and $(x_j)_{j\le n}\in\mathscr{S}_n$, by \eqref{upbdp}, $p^\omega_{n-j}\ast\nu(x_{j-1})\le \frac{c^\omega_3}{\delta n}$ uniformly. %For $(1-\delta) n\le j\le n$, $\widehat{\e}^\omega_{(x_j)_{j\le n}}\left[ r_j\left(1 \wedge (l_j p^\omega_{n-j}\ast\nu(x_{j-1})) \right)\right]\le C$. 
Therefore by dominated convergence theorem, one gets that uniformly for $(x_j)_{j\le n}\in\mathscr{S}_n$ and for $1\le j\le m_n$, both $\widehat{\e}^\omega_{(x_j)_{j\le n}}\left[ 1 \wedge (l_j p^\omega_{n-j}\ast\nu(x_{j-1}))\right]$ and $\widehat{\e}^\omega_{(x_j)_{j\le n}}\left[ r_j\left(1 \wedge (l_j p^\omega_{n-j}\ast\nu(x_{j-1})) \right)\right]$ are $o_n^\omega(1)$. While for $m_n <j \le n$, we note that both $\widehat{\e}^\omega_{(x_j)_{j\le n}}\left[ 1 \wedge (l_j p^\omega_{n-j}\ast\nu(x_{j-1}))\right]$ and $\widehat{\e}^\omega_{(x_j)_{j\le n}}\left[ r_j\left(1 \wedge (l_j p^\omega_{n-j}\ast\nu(x_{j-1})) \right)\right]$ are bounded by $C_0+1$. Consequently,
\begin{align}\label{R1R*bd}
& \frac{2}{(C_0+1)n}   \widehat{\e}^{\omega, \textrm{2-spine}}\left[ | R^{I,*}_n - R^{I}_n| \right]  
%\le &\sum_{1\le j\le n(1-\delta )} \left( \widehat{\e}^\omega_{(x_j)_{j\le n}}[r_j] \widehat{\e}^\omega_{(x_j)_{j\le n}}\left[ 1 \wedge (l_j p^\omega_{n-j}\ast\nu(x_{j-1}))\right] + \widehat{\e}^\omega_{(x_j)_{j\le n}}\left[ r_j\left(1 \wedge (l_j p^\omega_{n-j}\ast\nu(x_{j-1})) \right)\right] \right) \\
 \le o_n^\omega(1) + 2C_2C_0\delta + 2C_2 \delta.
% &+ \frac{2C_2}{(C_0+1)n} \sum_{n-\delta n< j\le n} C_0 \widehat{\e}^\omega_{(x_j)_{j\le n}}\left[ 1 \wedge (l_j p^\omega_{n-j}\ast\nu(x_{j-1}))\right] +\widehat{\e}^\omega_{(x_j)_{j\le n}}\left[ r_j\left(1 \wedge (l_j p^\omega_{n-j}\ast\nu(x_{j-1})) \right)\right] .
\end{align}

On the other hand, note that 
\begin{align*}
&  \widehat{\e}^{\omega, \textrm{2-spine}}\left[ | R^{I}_n - R^{I\!I}_n| \right]  =    \widehat{\e}^{\omega, \textrm{2-spine}}\left[ |\sum_{j=1}^n(R_{n,j}^I - R_{n,j}^{I\!I})|  \right] \\
  =& \widehat{\e}^{\omega, \textrm{2-spine}}\left[ |\sum_{j=1}^n(R_{n,j}^I - R_{n,j}^{I\!I})| \ind{\tau \le m_n}  \right]  + \widehat{\e}^{\omega, \textrm{2-spine}}\left[ |\sum_{j=1}^n(R_{n,j}^I - R_{n,j}^{I\!I})| \ind{\tau > m_n}  \right] .
\end{align*}
Apparently,
\begin{align}\label{R1R2-1}
\widehat{\e}^{\omega, \textrm{2-spine}}\left[ |\sum_{j=1}^n(R_{n,j}^I - R_{n,j}^{I\!I})| \ind{\tau > m_n}  \right] \le& \sum_{m_n < j\le n} \widehat{\e}^{\omega, \textrm{2-spine}}[R_{n,j}^I +R_{n,j}^{I\!I} ] \\
\le & \sum_{m_n < j\le n} \widehat{\e}^{\omega, \textrm{2-spine}}[ r_j^I + r^{I\!I}_j ] \le 2\delta n C_0. \nonumber
\end{align}
Moreover,
\begin{align*}
& \widehat{\e}^{\omega, \textrm{2-spine}}\left[ |\sum_{j=1}^n(R_{n,j}^I - R_{n,j}^{I\!I})| \ind{\tau \le m_n}  \right] \le  \widehat{\e}^{\omega, \textrm{2-spine}}\left[ \sum_{j=1}^n(R_{n,j}^I + R_{n,j}^{I\!I})\ind{\tau \le m_n}  \right] \\
=& \sum_{(x_j), (y_j)\in\mathscr{S}_n}\ind{\cup_{j\le m_n}\{x_j \neq y_j\}} \widehat{\p}^\omega( (\Xi_j)=(x_j)_{j\le n}, (S_j)=(y_j)_{j\le n}) \widehat{\e}^{\omega, \textrm{2-spine}}_{(x_j)_{j\le n},(y_j)_{j\le n}}\left[ \sum_{j=1}^n(R_{n,j}^I + R_{n,j}^{I\!I})\right]\\
=& \sum_{(x_j), (y_j)\in\mathscr{S}_n}\ind{\cup_{j\le m_n}\{x_j \neq y_j\}} \widehat{\p}^\omega( (\Xi_j)=(x_j)_{j\le n}, (S_j)=(y_j)_{j\le n}) \sum_{j=1}^n\widehat{\e}^{\omega, \textrm{2-spine}}_{(x_j)_{j\le n},(y_j)_{j\le n}}\left[ (r_j^I + r_j^{I\!I})\right],
\end{align*}
where $\widehat{\e}^{\omega, \textrm{2-spine}}_{(x_j)_{j\le n},(y_j)_{j\le n}}\left[ (r_j^I + r_j^{I\!I})\right] \le 2\sup_{(x_j)_{j\le n}} \widehat{\e}^\omega_{(x_j)_{j\le n}}[r_j] \le 2C_0$. We thus deduce from \eqref{TV-coupling} that
\begin{align}\label{R1R2-2}
& \widehat{\e}^{\omega, \textrm{2-spine}}\left[ |\sum_{j=1}^n(R_{n,j}^I - R_{n,j}^{I\!I})| \ind{\tau \le m_n}  \right] \\
\le &  2  C_0n \widehat{\p}^\omega(\cup_{0\le j\le m_n}\{\Xi_j \neq S_j\}) \le 2C_0n \varepsilon_n(\delta, \omega). \nonumber
\end{align}
Combining \eqref{R1R2-1} and \eqref{R1R2-2} shows that
\begin{align}\label{R1R2bd}
\frac{2}{(C_0+1)n}\widehat{\e}^{\omega, \textrm{2-spine}}\left[ | R^{I}_n - R^{I\!I}_n| \right]  \le 4\delta + 4\varepsilon_n(\delta, \omega).
\end{align}
In view of \eqref{dWbd}, \eqref{R1R*bd}, and \eqref{R1R2bd}, we conclude that for any fixed $\delta\in(0,1)$, 
\begin{align*}
d_W( \mathscr{L}(\frac{Z_n}{ \e^\omega[Z_n \vert Z_n \ge 1] }, \p^\omega(\cdot\vert Z_n \ge1)), Exp(1) ) \le o_n(1) + o_n^\omega(1) + C_3\delta.
\end{align*}
Letting $n\to\infty$ and then $\delta\downarrow 0+$ yields that 
\[
\lim_{n\to\infty}d_W( \mathscr{L}(\frac{Z_n}{ \e^\omega[Z_n \vert Z_n \ge 1] }, \p^\omega(\cdot\vert Z_n \ge1)), Exp(1) ) =0.
\]
By \eqref{asymcondm}, it is immediate that
\[
\lim_{n\to\infty}d_W\left( \mathscr{L}(\frac{Z_n}{ \e^\omega[Z_n \vert Z_n \ge 1] }, \p^\omega(\cdot\vert Z_n \ge1)), \mathscr{L}(\frac{Z_n}{ \frac{p\sigma^2_\circ+(1-p)\sigma^2_\bullet}{2}n }, \p^\omega(\cdot\vert Z_n \ge1)) \right) =0.
\]
Theorem \ref{thm: Yaglom} follows then from triangle inequality for $d_W$.

\section{Proof of technical Lemmas}\label{lems}

We prove the technical propositions and lemmas here. 

\begin{proof}[Proof of Lemma \ref{spectralradius}]
 For \(z=(z_1,\dots,z_d)\in\mathbb{Z}^d\), define its parity by
\(
\operatorname{par}(z):=\sum_{j=1}^d z_j \pmod{2}.
\) 
For a simple random walk started at \(z\), parity is preserved at even times, i.e., \(\operatorname{par}(S_{2n})=\operatorname{par}(z)\) for all \(n\ge 0\). Fix \(x\in A\), and decompose \(A\subset\mathbb{Z}^d\) according to parity as \(A=A^{\text{even}}\cup A^{\text{odd}}\), where
\[
A^{\text{even}}:=\{y\in A:\operatorname{par}(y)=\operatorname{par}(x)\},
\qquad
A^{\text{odd}}:=A\setminus A^{\text{even}}.
\]

We say that $y$ and $z$
are neighbours if $y=z+e$ for some $e$ with $|e|_1=1$, and in this case write $y\sim z$.
Since \(\#A\ge 2\), we order the vertices of $A$ by listing first the points in \(A^{\text{even}}\), followed by the points in $A^{\text{odd}}$.
With respect to this ordering, the matrix representation of the operator $T_A$
has the block form
\[
T_A=
\begin{pmatrix}
0 & B\\
B^{\mathsf T} & 0
\end{pmatrix}.
\]

Since $T_A$ is symmetric, all its eigenvalues are real. Define the linear
operator $J:\mathbb{R}^A\to\mathbb{R}^A$ by
\[
(Jf)(y):=
\begin{cases}
\ \ f(y), & y\in A^{\text{even}},\\
- f(y), & y\in A^{\text{odd}}.
\end{cases}
\]
A direct computation shows that
\[
JT_A=-T_AJ.
\]
Consequently, if $T_A f=\mu f$ for some eigenvalue $\mu\in\mathbb{R}$, then
\[
T_A(Jf)=-\mu(Jf),
\]
which implies that $-\mu$ is also an eigenvalue of $T_A$. Therefore, the
spectrum of $T_A$ is symmetric about the origin. In particular, the spectral radius \(\rho_A\) of \(T_A\) coincides with its largest eigenvalue.

We now define the two-step operator restricted to the parity class $A^{\text{even}}$. Let
\[
T^{\text{even}}_A:\mathbb{R}^{A^{\text{even}}}\longrightarrow\mathbb{R}^{A^{\text{even}}},
\qquad
T^{\text{even}}_A := T_A^2\big|_{A^{\text{even}}}.
\]
With respect to the same basis, the matrix representation of $T^{\text{even}}_A$ is given by
\[
T^{\text{even}}_A = B B^{\mathsf T}.
\]
Moreover, for any \(y,z\in A^{\text{even}}\), we have
\begin{equation}\label{def-T-even}
(T^{\text{even}}_A)_{y,z}=\Q_y(S_2=z,\,\tau_A>2),
\qquad
\big((T^{\text{even}}_A)^n\big)_{y,z}=\Q_y(S_{2n}=z,\,\tau_A>2n).
\end{equation}

Since $A$ is finite, connected, and contains at least two points, the Markov chain induced by $T_A^{\text{even}}$ on $A^{\text{even}}$ is irreducible and aperiodic. By the Perron--Frobenius theorem, the largest eigenvalue of $T_A^{\text{even}}$ is strictly positive; we denote it by $\rho(T_A^{\text{even}})$. 
Furthermore,
\[
\lim_{l\to\infty}\frac{(T_A^{\text{even}})^l}{\rho(T_A^{\text{even}})^l}=K,
\]
where $K$ is a rank-one matrix with strictly positive entries. In particular, for each $z\in A^{\text{even}}$ there exists $k_z>0$ such that
\[
\lim_{l\to\infty}
\frac{\mathbf{Q}_x(S_{2l}=z,\ \tau_A>2l)}{\rho(T_A^{\text{even}})^l}
= k_z.
\]

Since $T_A^{\text{even}} = B B^{\mathsf T}$, all its eigenvalues are nonnegative. Let $\mu$ be an eigenvalue of $T_A^{\text{even}}$ with corresponding eigenvector $u$. Then one checks directly that
\[
\begin{cases}
    \begin{pmatrix} 
        u \\ \mu^{-1/2} B^{\mathsf T} u 
    \end{pmatrix}, & \text{if } \mu > 0, \\[2ex]
    \begin{pmatrix} 
        u \\ 0 
    \end{pmatrix}, & \text{if } \mu = 0,
\end{cases}
\]
is an eigenvector of $T_A$ with eigenvalue $\sqrt{\mu}$. Conversely, if $\varphi$ is an eigenvector of $T_A$ with eigenvalue $\lambda$, then its restriction $\varphi|_{A^{\text{even}}}$ is an eigenvector of $T_A^{\text{even}}$ with eigenvalue $\lambda^2$. In particular, we have
\begin{equation}\label{relation-rhoA-rhoeven}
    \rho(T_A^{\text{even}})=\rho_A^2.
\end{equation}

Moreover, since $A$ is finite, the simple random walk exits $A$ almost surely.
In particular, for every $z\in A^{\text{even}}$,
\[
\mathbf{Q}_x(\tau_A>2n,\ S_{2n}=z)\le \mathbf{Q}_x(\tau_A>2n)\xrightarrow[n\to\infty]{}0.
\]
Combined with the asymptotic relation
\(
\mathbf{Q}_x(\tau_A>2n,\, S_{2n}=z)\sim k_z\,\rho_A^{2n}
\)
with $k_z>0$, this implies
\[
0< \rho_A<1,
\]
which concludes the proof of the first part of Lemma \ref{spectralradius}.

Next consider $z\in A^{\text{odd}}$.
Using the one-step transition rule of the simple random walk, we obtain
\begin{align*}
\mathbf{Q}_x\bigl(S_{2n+1}=z,\ \tau_A>2n+1\bigr)
&= \sum_{y\sim z}
\mathbf{Q}_x\bigl(S_{2n}=y,S_{2n+1}=z,\ \tau_A>2n+1\bigr)\\
&= \frac{1}{2d}\sum_{y\sim z}
\mathbf{Q}_x\bigl(S_{2n}=y,\ \tau_A>2n\bigr).
\end{align*}
Since \(y \sim z\), it follows that \(y \in A^{\text{even}}\). Hence there exists \(k_y>0\) such that
\[
\mathbf{Q}_x(S_{2n}=y,\ \tau_A>2n)\sim k_y\,\rho_A^{2n}.
\]
Dividing by $\rho_A^{\,2n+1}$ and letting $n\to\infty$ yields
\[
\lim_{n\to\infty}
\frac{\mathbf{Q}_x(S_{2n+1}=z,\ \tau_A>2n+1)}{\rho_A^{\,2n+1}}
=\frac{1}{2d\,\rho_A}\sum_{y\sim z} k_y.
\]

Summing over parity classes gives
\[
\lim_{n\to\infty}
\frac{\mathbf{Q}_x(\tau_A>2n)}{\rho_A^{\,2n}}
=\sum_{y\in A^{\text{even}}} k_y\in(0,\infty),
\]
and
\[
\lim_{n\to\infty}
\frac{\mathbf{Q}_x(\tau_A>2n+1)}{\rho_A^{\,2n+1}}
=\frac{1}{2d\,\rho_A}
\sum_{z\in A^{\text{odd}}}
\sum_{y\sim z} k_y\in(0,\infty).
\]

This completes the proof of the second part of Lemma~\ref{spectralradius}. 
For the third part, we begin with the one-dimensional case.
Let
\[
A_n:=\{0,1,\dots,n\}\subset\mathbb{Z}.
\]

It can be verified by direct computation that the eigenvalues of $T_{A_n}$
are given by
\[
\mu_j=\cos\!\left(\frac{j\pi}{n+2}\right),
\qquad j=1,2,\dots,n+1,
\]
and that the corresponding eigenfunctions are
\[
\varphi_j(k)=\sin\!\left(\frac{j\pi(k+1)}{n+2}\right),
\qquad k=0,1,\dots,n.
\]
In particular, the largest eigenvalue is
\[
\rho_{A_n}=\cos\!\left(\frac{\pi}{n+2}\right),
\]
and the associated eigenfunction $\varphi_1$ is strictly positive on $A_n$. Moreover,
\[
\lim_{n\to\infty}\rho_{A_n}=1,
\]
which prove the third part of Lemma \ref{spectralradius} for dimension one. Now we extend this
conclusion to arbitrary dimension.

Let $R(A)$ denote the maximal radius of an $\ell^1$-ball contained in $A$.
Without loss of generality, assume that the largest such ball is
$B_R(y)\subset A$. Since $A$ is finite and connected, for any $x\in A$ there
exists $m\in\mathbb{N}$ and an admissible path $(y_0=x, y_1,\cdots, y_m=y)$ inside $A$ such that
\[
\mathbf{Q}_x((S_j)_{j\le m}=(y_j)_{j\le m})>0.
\]
For any $n>m$, we have
\begin{align*}
\mathbf{Q}_x(\tau_A>n)
%&=\sum_{z\in A}\mathbf{Q}_x(\tau_A>n,\ S_m=z)
\;\ge\;\mathbf{Q}_x(\tau_A>n,\ (S_j)_{j\le m}=(y_j)_{j\le m}).
\end{align*}
By the Markov property, and \(\Q_y(\tau_{B_R(y)}\le \tau_A)=1\),
\begin{align*}
\mathbf{Q}_x(\tau_A>n)
%&\ge \mathbf{Q}_x(\tau_A>n\mid S_m=y)\,\mathbf{Q}_x((S_j)_{j\le m}=(y_j)_{j\le m})\\
&\ge \mathbf{Q}_y(\tau_A>n-m)\,\mathbf{Q}_x((S_j)_{j\le m}=(y_j)_{j\le m})\\
&\ge \mathbf{Q}_y(\tau_{B_R(y)}>n-m)\,\mathbf{Q}_x((S_j)_{j\le m}=(y_j)_{j\le m}).
\end{align*}

From the first part of the lemma we know that, for every $x\in A$,
\[
\rho_A
=\lim_{j\to\infty}
\exp\!\left(\frac{1}{j}\log \mathbf{Q}_x(\tau_A>j)\right).
\]
Combining this with the above lower bound yields
\begin{align*}
\rho_A
&\ge
\lim_{j\to\infty}
\exp\!\left(
\frac{\log \mathbf{Q}_y(\tau_{B_R(y)}>j-m)
+\log \mathbf{Q}_x((S_j)_{j\le m}=(y_j)_{j\le m})}{j}
\right)
=\rho_{B_R(0)},
\end{align*}
where we used translation invariance in the last step. Hence it suffices to
prove that
\[
\lim_{R\to\infty}\rho_{B_R(0)}=1.
\]

To this end, consider $d$ independent one-dimensional simple random walks
$(\widetilde S^{(1)},\dots,\widetilde S^{(d)})$, each started from the origin.
Let $(U_k)_{k\ge1}$ be an i.i.d.\ sequence, independent of the walks, uniformly
distributed on $\{1,\dots,d\}$, and denote by $N_n(i)$ the number of indices
$k\le n$ such that $U_k=i$. Then the $d$-dimensional simple random walk
$(S_n)_{n\ge0}$ satisfies the distributional identity
\[
S_n\stackrel{d}{=}
\bigl(\widetilde S^{(1)}_{N_n(1)},\dots,\widetilde S^{(d)}_{N_n(d)}\bigr).
\]
With a slight abuse of notation, we use $\mathbf{Q}_0$ to denote the joint law
of the $d$ independent one-dimensional simple random walks
$(\widetilde S^{(k)})_{1\le k\le d}$. Then
\[
\mathbf{Q}_0(\tau_{B_R(0)}>n)
=
\mathbf{Q}_0\!\left(\forall\, j\le n,\ |S_j|_1<R\right)
=
\mathbf{Q}_0\!\left(\forall\, j\le n,\ \sum_{k=1}^d
\bigl|\widetilde S^{(k)}_{N_j(k)}\bigr|_1<R\right).
\]
Moreover,
\begin{align*}
\mathbf{Q}_0\!\left(\forall\, j\le n,\ \sum_{k=1}^d
\bigl|\widetilde S^{(k)}_{N_j(k)}\bigr|_1<R\right)
&\ge
\mathbf{Q}_0\!\left(
\bigcap_{k=1}^d
\left\{\forall\, j\le n,\ 
\bigl|\widetilde S^{(k)}_{N_j(k)}\bigr|_1<\frac{R}{d}\right\}
\right)\\
&\ge
\mathbf{Q}_0\!\left(
\bigcap_{k=1}^d
\left\{\forall\, j\le n,\ 
\bigl|\widetilde S^{(k)}_{j}\bigr|_1<\frac{R}{d}\right\}
\right).
\end{align*}
Since the processes $(\widetilde S^{(k)})_{1\le k\le d}$ are independent, it
follows that
\begin{align*}
\mathbf{Q}_0\!\left(
\bigcap_{k=1}^d
\left\{\forall\, j\le n,\ 
\bigl|\widetilde S^{(k)}_{j}\bigr|_1<\frac{R}{d}\right\}
\right)
&=
\left[
\mathbf{Q}_0\!\left(
\forall\, j\le n,\ 
\bigl|\widetilde S^{(1)}_{j}\bigr|_1<\frac{R}{d}
\right)
\right]^d\\
&=
\mathbf{Q}_0\!\left(
\tau_{\{-[R/d]+1,\dots,[R/d]-1\}}>n
\right)^d.
\end{align*}
Combining the above inequalities yields
\[
\mathbf{Q}_0(\tau_{B_R(0)}>n)
\ge
\mathbf{Q}_0\!\left(
\tau_{\{-[R/d]+1,\dots,[R/d]-1\}}>n
\right)^d.
\]

By the one-dimensional result established earlier,
\[
\lim_{R\to\infty}
\rho_{\{-[R/d]+1,\dots,[R/d]-1\}}=1.
\]
This implies $\lim_{R\to\infty}\rho_{B_R(0)}=1$, and therefore
$\rho_A\to1$ as $R(A)\to\infty$, which completes the proof of the third
assertion of Lemma~\ref{spectralradius}.
\end{proof}

\begin{proof}[Proof of Proposition \ref{brw+killing}]

We begin by recalling basic facts about multi-type branching processes. A \(D\)-type branching process
\[
N_n=(N_n(1),\, N_n(2),\,\dots,\, N_n(D))
\]
is a Markov chain in state space \((\N_{\ge 0})^D\) with reproduction law \(\{\mathbf{P}^{(k)}:\,k=1,\dots,D\}\). In other words, for any \(\mathbf{i}\in (\N_{\ge 0})^D\),
\[
\mathbf{P}^{(k)}(\mathbf{i}) := \p\left(N_1=\mathbf{i} \Big| N_0=\Big(0, \dots, 0,\underset{\substack{\uparrow \\ k\text{-th}}}{1}, 0,\dots, 0\Big)\right).
\]
The initial state \(N_0 \neq \mathbf{0}\) is a random vector in \((\N_{\ge 0})^D\), and the evolution is given by
\[
N_{n+1}=\sum_{k=1}^D\sum_{j=1}^{N_n(k)}\xi_{j,n}^{(k)},
\]
where for each fixed type \(k\), the random vectors \(\xi_{j,n}^{(k)}\) are i.i.d. with law \(\P^{(k)}\) on \((\N_{\ge 0})^D\), and the collection
\[
\left\{\xi_{j,n}^{(k)}:\, k\in \{1,\dots,D\},\, j\in \N_{\ge 1},\, n\in\N_{\ge 1}\right\}
\]
is mutually independent. The mean matrix \(M\) of the process \((N_n)_{n\in\N}\) is the \(D\times D\) matrix defined by
\[
M_{k,j}=\sum_{\mathbf{i}\in\N_{\ge 0}^D} \mathbf{i}(j)\, \P^{(k)}(\mathbf{i}),
\]
where \(\mathbf{i}(j)\) denotes the \(j\)-th coordinate of \(\mathbf{i}\in\N_{\ge 0}^D\). It is well known (see, e.g., \cite[Chapter V, Section 3, Theorem 2]{AN}) that, under the irreducibility assumption that there exists \(k\in\N_{\ge 1}\) such that all entries of \(M^k\) are strictly positive, the survival probability
\[
\p(\forall n\in\N_{\ge 1},\, N_n\neq \mathbf{0})
=\p\!\left(\forall n\in\N_{\ge 1},\, \sum_{k=1}^D N_n(k)\ge 1\right)
\]
is positive if and only if the the largest eigenvalue \(\rho(M)\) satisfies \(\rho(M)>1\). For \(z=(z_1,\dots,z_d)\in\mathbb{Z}^d\), define its parity by
\(
\operatorname{par}(z):=\sum_{j=1}^d z_j \pmod{2}.
\) 
Fix \(x\in A\), and decompose \(A\subset\mathbb{Z}^d\) according to parity as \(A=A^{\text{even}}\cup A^{\text{odd}}\).%, where
% \[
% A_{\text{even}}:=\{y\in A:\operatorname{par}(y)=\operatorname{par}(x)\},
% \qquad
% A_{\text{odd}}:=A\setminus A_{\text{even}}.
% \]

Without loss of generality, we can assume that $\sum_k kp_k<\infty$. Otherwise, we could take a truncated offspring law with finite mean. Let \(Z_n(y)\) denote the number of particles at site \(y\in A\) at time \(n\). Then
\[
\Big(\{Z_{2n}(y):\, y\in A^{\text{even}}\}\Big)_{n=0,1,\dots}
\]
forms a multi-type branching process. Conditioning on the first generation yields that for all \(y,z\in A^{\text{even}}\),
\[
\e_{\p^{(A)}_z}[Z_{2}(y)]
= \left(\sum_{k=0}^\infty k p_k\right)^2
  \Q_z\big(S_2=y,\, \tau_A>2\big).
\]

In other words, using the definition of \(T_A^{\text{even}}\) in \eqref{def-T-even}, the mean matrix of the process \(\{Z_{2n}(y):\, y\in A^{\text{even}}\}\) is
\[
M^{\text{even}}=\left(\sum_{k} k p_k\right)^2\, T_A^{\text{even}}.
\]
Moreover, all entries of \((M^{\text{even}})^{\#A}\) are strictly positive, and by \eqref{relation-rhoA-rhoeven}, the largest eigenvalue of \(M^{\text{even}}\) is given by
\[
\rho(M^{\text{even}})=\left(\sum_{k} k p_k\right)^2 \rho_A^2.
\]
Consequently, if \(\left(\sum_k k p_k\right)\rho_A>1\), then
\[
\p(\forall n\in\N_{\ge 1},\, Z_n\ge 1)
=\p(\forall n\in\N_{\ge 1},\, Z_{2n}\ge 1)>0,
\]
whereas if \(\left(\sum_k k p_k\right)\rho_A\le 1\), then
\[
\p(\forall n\in\N_{\ge 1},\, Z_n\ge 1)
=\p(\forall n\in\N_{\ge 1},\, Z_{2n}\ge 1)=0.
\]
\end{proof}

\begin{proof}[Proof of Lemma \ref{percolation}]
Assume that $\theta_{\Z^d}(p) > 0$. Fix an arbitrary integer $R \ge 1$. Let $A_R(x)$ denote the event that the ball $B_x(R)$ centered at $x \in \Z^d$ is entirely open and that the open cluster containing $x$ is infinite. The event $A_R(x)$ can be written as the intersection of two increasing events: $E_1 = \{B_x(R)\ \text{is fully open}\}$ and $E_2 = \{\#\mathcal{C}(x) = \infty\}$. By the FKG inequality, we have
\[
\P_p(A_R(x)) \ge \P_p(E_1)\P_p(E_2) = p^{|B_x(R)|}\,\theta_{\Z^d}(p).
\]
Since $p > 0$ and $\theta_{\Z^d}(p) > 0$, it follows that $\P_p(A_R(x)) > 0$, and hence
\[
\P_p\Bigl(\bigcup_{x \in \Z^d} A_R(x)\Bigr) > 0.
\]

By Kolmogorov's $0$--$1$ law, this probability is in fact equal to $1$. Moreover, by the uniqueness of the infinite cluster (see \cite{BK}), there exists a unique infinite open cluster, denoted by $\mathcal{C}_\infty$, almost surely. Consequently,
\[
\P_p\bigl(\exists\, x \in \Z^d \text{ such that } B_x(R) \subset \mathcal{C}_\infty \bigr) = 1.
\]

Since this holds for every fixed $R \ge 1$, taking the countable intersection over all $R$ yields
\[
\P_p\bigl(\forall R \ge 1,\ \exists x \in \Z^d \text{ such that } B_x(R) \subset \mathcal{C}_\infty \bigr) = 1.
\]
Finally, conditioning on the positive-probability event $\{\#\mathcal{C}(0) = \infty\}$ and noting that $\mathcal{C}(0) = \mathcal{C}_\infty$ under this conditioning, we conclude that
\[
\P_p\left( \forall R \ge 1,\ \exists x \in \mathcal{C}(0) \text{ such that } B_x(R) \subset \mathcal{C}(0) \,\middle|\, \#\mathcal{C}(0) = \infty \right) = 1.
\]
This completes the proof.
\end{proof}
\begin{proof}[Proof of Lemma \ref{couple-spine}]
We check that for any fixed $\delta\in(0,1)$, $\P_p$-a.s., as $n\to\infty$,
\begin{equation}\label{TVcvg}
 \sum_{(x_j)_{j\le m_n} \in \mathscr{S}_{m_n}} | \widehat{\p}^\omega( (\Xi_j^{(n)})_{j\le m_n} = (x_j)_{j\le m_n} ) - \P( (S_j)_{j\le m_n} = (x_j)_{j\le m_n}) | \to 0.
\end{equation}

First, recall that
\[
\widehat{\p}^\omega( (\Xi_j^{(n)})_{j\le m_n} \in\cdot ) = \widehat{\p}^\omega( (S_{\zeta_j})_{j\le m_n} \in\cdot \vert \cap_{j=1}^n\{L_{n,j}=0\} ).
\]
Now let
\[
A_n^\delta:=\cap_{1\le j\le m_n}\{ L_{n,j}=0\},
\]
and introduce a new random vector $(\Xi_j^{(n,\delta)})_{0\le j\le m_n}$ satisfying that 
\begin{align}
\widehat{\p}^\omega( (\Xi_j^{(n,\delta)})_{0\le j\le m_n} \in \cdot ) = \widehat{\p}^\omega( (S_{\zeta_j})_{j\le m_n} \in\cdot\vert A_n^\delta).
\end{align}
Then \eqref{TVcvg} follows from the following two estimates:
\begin{align}\label{TVcvg-1}
\Sigma_\eqref{TVcvg-1}:=
&  \sum_{(x_j)_{j\le m_n} \in \mathscr{S}_{m_n}} | \widehat{\p}^\omega( (\Xi_j^{(n,\delta)})_{j\le m_n} = (x_j)_{j\le m_n} ) - \P( (S_j)_{j\le m_n} = (x_j)_{j\le m_n}) | \\
=& o_n^\omega(1),\nonumber
\end{align}
\begin{align}\label{TVcvg-2}
\Sigma_\eqref{TVcvg-2}:=&\sum_{(x_j)_{j\le m_n} \in \mathscr{S}_{m_n}} | \widehat{\p}^\omega( (\Xi_j^{(n)})_{j\le m_n} = (x_j)_{j\le m_n} ) - \widehat{\p}^\omega( (\Xi_j^{(n,\delta)})_{j\le m_n} = (x_j)_{j\le m_n} ) | \\
=& o_n^\omega(1).\nonumber
\end{align}

\paragraph{Step 1: Proof of \eqref{TVcvg-1}.} Observe that 
\begin{align*}
\Sigma_\eqref{TVcvg-1}=& \sum_{(x_j)_{j\le m_n} \in \mathscr{S}_{m_n}} |  \frac{ \widehat{\p}^\omega( (S_{\zeta_j})_{j\le m_n} = (x_j)_{j\le m_n}; A_n^\delta) }{ \widehat{\p}^\omega(A_n^\delta) } -   \widehat{\p}^\omega( (S_{\zeta_j})_{j\le m_n} = (x_j)_{j\le m_n}) |\\
=&  \sum_{(x_j)_{j\le m_n} \in \mathscr{S}_{m_n}}  \widehat{\p}^\omega( (S_{\zeta_j})_{j\le m_n} = (x_j)_{j\le m_n}) |\frac{ \widehat{\p}^\omega(A_n^\delta \vert  (S_{\zeta_j})_{j\le m_n} = (x_j)_{j\le m_n} ) }{ \widehat{\p}^\omega(A_n^\delta) }-1| \\
=& \widehat{\e}^\omega\left[ \bigg|\frac{ \widehat{\e}^\omega(\ind{A_n^\delta} \vert  (S_{\zeta_j})_{j\le m_n} ) }{ \widehat{\p}^\omega(A_n^\delta) }-1\bigg| \right].
\end{align*}
We view $\frac{ \widehat{\e}^\omega(\ind{A_n^\delta} \vert  (S_{\zeta_j})_{j\le m_n} ) }{ \widehat{\p}^\omega(A_n^\delta) }$ as a random variable with mean $1$. Then Cauchy-Schwartz inequality shows that
\begin{align*}
\Sigma_\eqref{TVcvg-1} \le & \sqrt{ \widehat{\e}^\omega\left[ \bigg(\frac{ \widehat{\e}^\omega(\ind{A_n^\delta} \vert  (S_{\zeta_j})_{j\le m_n} ) }{ \widehat{\p}^\omega(A_n^\delta) }-1\bigg)^2 \right] } = \sqrt{ Var\left( \frac{ \widehat{\e}^\omega(\ind{A_n^\delta} \vert  (S_{\zeta_j})_{j\le m_n} ) }{ \widehat{\p}^\omega(A_n^\delta) } \right)}.
\end{align*}
A classical inequality on conditional variance says that if there are two sigma-algebras $\mathcal{G}\subset\mathcal{H}$ on the probability space, then 
\[
\operatorname{Var}\!\left(\widehat\e^\omega[X\mid\mathcal G]\right)
 \le
 \operatorname{Var}\!\left(\widehat\e^\omega[X\mid\mathcal H]\right),
 \qquad \mathcal G\subset\mathcal H.
\]
See for instance \cite[Section 5.1]{Durrett}.  Now we take $\mathcal{G}= \sigma(  (S_{\zeta_j})_{j\le m_n}  )$ and $\mathcal{H}= \sigma(  (\zeta_j, S_{\zeta_j})_{j\le m_n}; \{ (l_j, r_j) \}_{j\le m_n})$. Consequently,
\begin{align}\label{TVcvg-1bd}
\Sigma_\eqref{TVcvg-1} \le \sqrt{ Var\left( \frac{  \widehat{\e}^\omega(\ind{A_n^\delta} \vert  \mathcal{H} ) }{ \widehat{\p}^\omega(A_n^\delta) } \right)}.
\end{align}
Observe that
\begin{align*}
 \widehat{\e}^\omega(\ind{A_n^\delta} \vert  \mathcal{H} ) = & \widehat{\e}^\omega\left[ \prod_{1\le j\le m_n } \ind{L_{n,j}=0 } \vert \mathcal{H} \right] =  \prod_{1\le j\le m_n} \left( 1- p^\omega_{n-j}\ast\nu (S_{\zeta_{j-1}}) \right)^{l_j} \\
 =& \exp\left\{ \sum_{1\le j\le m_n} l_j \log \left( 1- p^\omega_{n-j}\ast\nu (S_{\zeta_{j-1}}) \right) \right\}.
\end{align*}
Theorem \eqref{thm: UKprobab} shows that uniformly in $1\le j\le m_n$, 
\[
 p^\omega_{n-j}\ast\nu (S_{\zeta_{j-1}}) = \frac{1+ o^\omega_n(1)}{ \frac{\overline{\sigma}^2}{2}(n-j) }.
\]
As $\log(1-h) = - h(1+o_h(1))$ for $0< h\ll 1$, we therefore obtain that
\begin{align*}
 \widehat{\e}^\omega(\ind{A_n^\delta} \vert  \mathcal{H} ) = &  \exp\left\{ -(1+o^\omega_n(1)) \sum_{1\le j\le m_n} l_j \frac{1}{ \frac{\overline{\sigma}^2}{2}(n-j) } \right\}.
\end{align*}
Here we claim that for $\P_p$-a.s. environment $\omega$, 
\begin{equation}\label{reweightedsum}
 \sum_{1\le j\le m_n}\frac{l_j}{n-j} \xrightarrow[n\to\infty]{\widehat{\p}^\omega-a.s.} (-\log \delta)\frac{\overline{\sigma}^2}{2} .
\end{equation}
By admitting this claim, we get that for $\P_p$-a.s. environment $\omega$,
\[
 \widehat{\e}^\omega( \ind{A_n^\delta} \vert  \mathcal{H} ) \xrightarrow[n\to\infty]{\widehat{\p}^\omega-a.s.} \delta.
\]
As $\ind{A_n^\delta}\le 1$, dominated convergence theorem shows that $\P_p$-a.s.,
\begin{equation}\label{pAn}
 \widehat{\p}^\omega(A_n^\delta) = \widehat{\e}^\omega[ \widehat{\e}^\omega( \ind{A_n^\delta} \vert  \mathcal{H} ) ] \to \delta.
\end{equation}
Going back to \eqref{TVcvg-1bd}, as $\limsup_n|  \frac{  \widehat{\e}^\omega(\ind{A_n^\delta} \vert  \mathcal{H} ) }{ \widehat{\p}^\omega(A_n^\delta) } -1 |\le \frac{1}{ \delta }+1<\infty$, again by dominated convergence theorem, we deduce that for $\P_p$-a.s. environment $\omega$,
\begin{align*}
\Sigma_\eqref{TVcvg-1} = o_n^\omega(1).
\end{align*}

\paragraph{Step 2: proof of \eqref{TVcvg-2}.} It is immediate that
\begin{align*}
\Sigma_\eqref{TVcvg-2}=&\sum_{(x_j)_{j\le m_n} \in \mathscr{S}_{m_n}} \widehat{\p}^\omega( (\Xi_j^{(n,\delta)})_{j\le m_n} = (x_j)_{j\le m_n} ) \bigg| \frac{ \widehat{\p}^\omega( (\Xi_j^{(n)})_{j\le m_n} = (x_j)_{j\le m_n} )}{ \widehat{\p}^\omega( (\Xi_j^{(n,\delta)})_{j\le m_n} = (x_j)_{j\le m_n} ) } - 1 \bigg| \\
\le & \sup_{ (x_j)_{j\le m_n} \in \mathscr{S}_{m_n}} \bigg| \frac{ \widehat{\p}^\omega( (\Xi_j^{(n)})_{j\le m_n} = (x_j)_{j\le m_n} )}{ \widehat{\p}^\omega( (\Xi_j^{(n,\delta)})_{j\le m_n} = (x_j)_{j\le m_n} ) } - 1 \bigg|.
\end{align*}
Observe that
\begin{align*}
 &\frac{ \widehat{\p}^\omega( (\Xi_j^{(n)})_{j\le m_n} = (x_j)_{j\le m_n} )}{ \widehat{\p}^\omega( (\Xi_j^{(n,\delta)})_{j\le m_n} = (x_j)_{j\le m_n} ) }
 = \frac{ \widehat{\p}^\omega( (S_{\zeta_j})_{j\le m_n}= (x_j)_{j\le m_n} \vert \cap_{j=1}^n\{L_{n,j}=0\} ) }{ \widehat{\p}^\omega( (S_{\zeta_j})_{j\le m_n}= (x_j)_{j\le m_n} \vert A_n^\delta  ) } \\
 =&\frac{  \widehat{\p}^\omega( (S_{\zeta_j})_{j\le m_n}= (x_j)_{j\le m_n}; \cap_{j=1}^n\{L_{n,j}=0\} ) \widehat{\p}^\omega(A_n^\delta) }{ \widehat{\p}^\omega( (S_{\zeta_j})_{j\le m_n}= (x_j)_{j\le m_n} ;A_n^\delta )\widehat{\p}^\omega( \cap_{j=1}^n\{L_{n,j}=0\} )}
\end{align*}
which by the Markov property at time $m_n$ equals to 
\begin{align*}
=& \frac{  \widehat{\p}^\omega( (S_{\zeta_j})_{j\le m_n}= (x_j)_{j\le m_n};A_n^\delta ) \widehat{\p}^\omega_{x_{m_n}}(L_{\delta n}=0) \widehat{\p}^\omega(A_n^\delta) }{ \widehat{\p}^\omega( (S_{\zeta_j})_{j\le m_n}= (x_j)_{j\le m_n} ;A_n^\delta )\widehat{\e}^\omega[\ind{A_n^\delta} \widehat{\p}^\omega_{S_{\zeta_{m_n}}}( L_{\delta n} =0 )] } \\
= & \frac{  \widehat{\p}^\omega_{x_{m_n}}(L_{\delta n}=0) \widehat{\p}^\omega(A_n^\delta) }{ \widehat{\e}^\omega[\ind{A_n^\delta} \widehat{\p}^\omega_{S_{\zeta_{m_n}}}( L_{\delta n} =0 )]  }= \frac{ p^\omega_{\delta n}(x_{m_n}) \widehat{\p}^\omega(A_n^\delta)  }{  \widehat{\e}^\omega[\ind{A_n^\delta} p^\omega_{\delta n}(S_{\zeta_{m_n}})]  },
\end{align*}
where the last equality comes from \eqref{probab-spine}. Note that $|x_{m_n}|_1\le n$ and $|S_{\zeta_{m_n}} |_1\le n$, so the uniform convergence in Theorem \ref{thm: UKprobab} implies that
\[
p^\omega_{\delta n}(x_{m_n})  = \frac{1+o_n^\omega(1)}{\delta n}\frac{2}{\overline{\sigma}^2}, \textrm{ and } p^\omega_{\delta n}(S_{\zeta_{m_n}})=  \frac{1+o_n^\omega(1)}{\delta n}\frac{2}{\overline{\sigma}^2}.
\]
As a result, uniformly for $(x_j)_{j\le m_n} \in \mathscr{S}_{m_n}$,
\begin{align*}
&\bigg| \frac{ \widehat{\p}^\omega( (\Xi_j^{(n)})_{j\le m_n} = (x_j)_{j\le m_n} )}{ \widehat{\p}^\omega( (\Xi_j^{(n,\delta)})_{j\le m_n} =  (x_j)_{j\le m_n} ) } - 1\bigg|= \bigg| \frac{ (1+o_n^\omega(1))\widehat{\p}^\omega(A_n^\delta)  }{ \widehat{\e}^\omega[\ind{A_n^\delta}(1+o_n^\omega(1))] }-1 \bigg| \\
=& \frac{ o_n^\omega(1) }{\widehat{\p}^\omega(A_n^\delta) + o_n^\omega(1) }.
\end{align*}
It then follows from \eqref{pAn} that
\begin{align*}
\Sigma_\eqref{TVcvg-2} = \frac{ o_n^\omega(1) }{\widehat{\p}^\omega(A_n^\delta) + o_n^\omega(1) } = o_n^\omega(1),
\end{align*}
which completes the proof of \eqref{TVcvg-2}.

It remains to prove \eqref{reweightedsum}. In fact, it follows from Lemma \ref{LLNlr} and the following fact: for a real sequence $\{y_n\}_{n\ge 1}$,  % satisfies that $\frac1n \sum_{j=1}^n y_j \to a\in\R$, then
\begin{equation}\label{reweightedy}
\frac1n \sum_{j=1}^n y_j \to a\in\R \Longrightarrow \sum_{1\le j\le m_n}\frac{y_j}{n-j} \to (-\log\delta)a.
\end{equation}
Set $m=m_n$ and $s_k:=\sum_{j=1}^k y_j$ with $s_0:=0$. Then one sees that
\begin{align*}
\sum_{1\le j\le m_n}\frac{y_j}{n-j} = & \sum_{1\le j\le m_n}\frac{s_j - s_{j-1}}{n-j} = \sum_{j=1}^{m}\frac{s_j}{n-j} - \sum_{j=1}^{m-1} \frac{s_j}{n-j-1} \\
= & \frac{ s_m}{n-m} + \sum_{j=1}^{m-1} [\frac{s_j}{n-j}- \frac{s_j}{n-j-1}]\\
=& \frac{ s_m}{n-m} - \sum_{j=1}^{m-1} \frac{aj}{(n-j)(n-j-1)} - \sum_{j=1}^{m-1} (\frac{s_j}{j}-a)\frac{j}{(n-j)(n-j-1)}.
\end{align*}
On the one hand, it is apparent that
\[
 \frac{ s_m}{n-m} \to \frac{1-\delta}{\delta} a, \textrm{ and } \sum_{j=1}^{m-1} \frac{aj}{(n-j)(n-j-1)} \to \int_0^{1-\delta} \frac{audu}{(1-u)^2} =[ \log\delta + \frac{1-\delta}{\delta}]a.
\]
On the other hand, as $s_n/n\to a$, $C_4:=\sup_{j\ge 1}|\frac{s_j}{j}-a |<\infty$. We note that
\begin{align*}
|\sum_{j=1}^{m-1} (\frac{s_j}{j}-a)\frac{j}{(n-j)(n-j-1)}| \le & \sum_{j=1}^{m-1} | \frac{s_j}{j}-a| \frac{j}{(n-j)(n-j-1)} \\
\le & \sum_{j=1}^{\sqrt{n}} C_4 \frac{j}{(\delta n)^2} + \sum_{\sqrt{n} \le j\le m_n}  | \frac{s_j}{j}-a| \frac{n}{(\delta n)^2}\\
\le & C_4 \frac{1}{\delta^2 n} + o_n(1) \frac{1}{\delta^2} = o_n(1).
\end{align*}
We thus conclude \eqref{reweightedy} which suffices to show \eqref{reweightedsum}.
\end{proof}

\section*{Acknowledgment.} 
We would like to thank Yuval Peres and Zhenyao Sun for the enlightening discussions.

X. C. is supported by National Key R\&D Program of China (No. 2022YFA1006500) and the National Natural Science Foundation of China (Grant No. 12571148) and by the Fundamental Research Funds for the Central Universities (No. 310432104).. C. G. is supported by National Key R\&D Program of China (No. 2023YFA1010400) and National Natural Science Foundation of China (Nos. 12595280, 12595284).

%\section{Weak convergence conditionally on $\tM\le -x$}
%
%\section{Weak convergence conditionally on $W_\infty \ge x$}

\bibliographystyle{plain}
\bibliography{ref.bib}

\end{document}